\documentclass[12pt]{article}

\usepackage[T1]{fontenc}
\usepackage[utf8]{inputenc}
\usepackage{graphicx}
\usepackage{amsmath, amsthm, amssymb, mathtools}
\usepackage{amsfonts}
\usepackage{bbm,bm}
\usepackage{xcolor}
\usepackage{hyperref}
\usepackage{natbib}
\usepackage{geometry}
\newcommand{\R}{\mathbb{R}}
\newcommand{\rough}[1]{\boldsymbol{{#1}}}

\theoremstyle{plain}
\newtheorem{theorem}{Theorem}[section]

\newtheorem{corollary}{Corollary}[theorem]

\theoremstyle{definition}
\newtheorem{definition}[theorem]{Definition}
\newtheorem{assumption}[theorem]{Assumption}

\theoremstyle{remark}
\newtheorem*{remark}{Remark}

\begin{document}

\title{Robust Filtering of L\'evy-driven Stochastic Models}

\author{
  Sharan Srinivasan\thanks{School of Electrical and Computer Engineering, Purdue University, West Lafayette, IN, USA. \texttt{srini256@purdue.edu}} \and
  Vijay Gupta\thanks{School of Electrical and Computer Engineering, Purdue University, West Lafayette, IN, USA. \texttt{gupta869@purdue.edu}} \and
  Harsha Honnappa\thanks{Edwardson School of Industrial Engineering, Purdue University, West Lafayette, IN, USA. \texttt{honnappa@purdue.edu}}
}

\date{}

\maketitle

\begin{abstract}
 We study robust nonlinear filtering for stochastic models driven by L\'evy processes, where the signal and observation processes are coupled through common Brownian and jump noise. Robustness, defined as the continuous dependence of the filter on the observation path, is essential whenever the observation process deviates from the idealized model, for instance when a path must be reconstructed from discrete-time samples. This question is well understood for continuous semimartingale systems but largely open in the presence of jumps.

We construct a version of the filter and establish its continuity in two regimes. For processes with finitely many jumps on compact intervals, we prove continuity in both the rough $p$-variation and $p$-variation topologies on cadlag path space, without requiring a separability condition on the jump coefficients. For processes with infinitely many jumps, we prove continuity in a modified rough $p$-variation topology adapted to cadlag geometric rough paths, under an additional separability assumption. In both cases, our approach relies on Stratonovich and Marcus flow decompositions rather than the It\^o-based methods of recent work. The resulting geometric rough-path lifts yield pathwise convergence guarantees and can be constructed directly from discrete observations without knowledge of the underlying probability law.
\end{abstract}

\noindent\textbf{Keywords:} L\'evy processes, nonlinear filtering, robust filtering, rough paths, stochastic flows

\noindent\textbf{Funding:} This research was supported by the Office of Naval Research (ONR) through grant no.\ FA9550-24-1-0210.

\section{Introduction}\label{sec:intro}

The problem of \textit{stochastic filtering} (see, e.g.~\citep{bain2009fundamentals,crisan2011oxford,kallianpur2013stochastic}) seeks to estimate an unobserved signal process $X_t$ from a noisy observation process $Y_t$. Given a measurable test function $f$, the \textit{filter} is the conditional expectation $\pi_t(f):=\mathbb{E}[f(X_t,Y_t)\vert\mathcal{F}^Y_t]$. As is well known, the filter obtained through the Kushner--Stratonovich equation is typically not unique. In practice, however, the observation path is never available exactly: one observes discrete-time samples and must reconstruct a continuous path, e.g.\ by linear or piecewise-constant interpolation. For the filter to be meaningful, it must be insensitive to such reconstruction choices. This leads to the \textit{robust filtering problem}: constructing a version $\theta^f_t(Y)$ of $\pi_t(f)$ that depends continuously on the observation path under a suitable topology.

The robust filtering problem has a long history in the continuous semimartingale setting. Clark~\citep{clark1978design} first advocated continuity of the filter in observation paths as a natural selection criterion. Clark and Crisan~\citep{clark} constructed a robust version $\theta_t^f(Y)$ when the signal and observation are driven by independent Brownian motions, using an integration-by-parts argument. The inclusion of common noise---when the signal and observation share a driving Brownian motion---introduces substantial additional difficulties. Davis~\citep{davis1987pathwise} addressed the one-dimensional case on manifolds using stochastic flow decompositions due to Kunita~\citep{kunita2006decomposition}, which require the driving vector fields to form a solvable Lie algebra. Elliott and Kohlmann~\citep{elliott1981robust} considered a related setting under the same algebraic constraint. Crisan et al.~\citep{crisanrough} removed the Lie-algebraic restriction by lifting the observation to a geometric rough path, reducing the problem to the continuity of rough differential equation (RDE) flows---a result that requires only Lipschitz regularity and linear growth of the vector fields.

All of the above works treat signal--observation systems driven by continuous semimartingales. Much less is known when the driving noise includes jumps. Qiao~\citep{qiaouncorr,qiaocorr} derived the Kushner--Stratonovich equations for signal--observation models driven by L\'evy-type noise (integer-valued random measures in the sense of~\citep{applebaum2009levy}), but did not address robustness. The contemporaneous work of Allan et al.~\citep{allan2025rough} is the first to construct a robust filter in the jump setting, developing new results on discontinuous RDEs and rough stochastic differential equations (RSDEs) based on non-geometric (It\^o) rough paths.

\subsection{Contributions}

We study the robust filtering problem for the signal--observation system
  \begin{subequations}\label{eq:general model}
    \begin{align}
        dX_t &:= b_1(t,X_t,Y_t)\,dt + \sigma_0(t,X_t,Y_t)\circ dB_t + \sigma_1(t,X_t,Y_t)\circ dW_t \\ \nonumber &\quad+ \int_{\mathbb{U}_p}f_1(t,X_{t-},Y_{t-},x)\,\tilde{N}_p(dt,dx) + \int_{\mathbb{U}_\lambda}f_3(t,X_{t-},Y_{t-},x)\,\tilde{N}_\lambda(dt,dx), \\
        dY_t &:= b_2(t,X_t,Y_t)\,dt+\sigma_2(t,Y_t)\circ dW_t + \int_{\mathbb{U}_\lambda}f_2(t,Y_{t-},x)\,\tilde{N}_\lambda(dt,dx),
    \end{align}
\end{subequations}
on a filtered probability space $(\Omega,\mathcal{F},(\mathcal{F}_t)_{t\in[0,T]},\mathbb{P})$ carrying independent Brownian motions $B$ (${d_B}$-dimensional) and $W$ (${d_Y}$-dimensional), and independent integer-valued random measures $N_p$, $N_\lambda$ on Blackwell spaces $(\mathbb{U}_p,\mathcal{U}_p)$, $(\mathbb{U}_\lambda,\mathcal{U}_\lambda)$ with $\mathbb{U}_d:=\{x\in\mathbb{R}^d:|x|<1\}$. Here $\tilde{N}_p$ and $\tilde{N}_\lambda$ denote the compensated measures with compensators $\nu_1(dx)\,dt$ and $\lambda(t,X_{t-},x)\nu_2(dx)\,dt$ respectively, where $\lambda:[0,T]\times\mathbb{R}^{d_X}\times\mathbb{U}_1\rightarrow (0,\infty)$ is Borel measurable and $\nu_1,\nu_2$ are $\sigma$-finite measures. The coefficient domains are $b_1:[0,T]\times\mathbb{R}^{d_X+d_Y}\rightarrow \R^{d_X}$, $b_2:[0,T]\times\mathbb{R}^{d_X+d_Y}\rightarrow \R^{d_Y}$, $\sigma_0:[0,T]\times\mathbb{R}^{d_X+d_Y}\rightarrow \R^{d_X\times d_B}$, $\sigma_1:[0,T]\times\mathbb{R}^{d_X+d_Y}\rightarrow \R^{d_X\times d_Y}$, $\sigma_2:[0,T]\times\R^{d_Y} \rightarrow \R^{d_Y\times d_Y}$, $f_1:[0,T]\times\mathbb{R}^{d_X+d_Y}\times \mathbb{U}_p\rightarrow \R^{d_X}$, $f_2:[0,T]\times\R^{d_Y}\times\mathbb{U}_\lambda\rightarrow \R^{d_Y}$, $f_3:[0,T]\times\R^{d_X+d_Y}\times\mathbb{U}_\lambda\rightarrow \R^{d_X}$, all Borel measurable. Note that the signal and observation share both Brownian and jump noise components.

Our approach is based on Stratonovich and Marcus flow decompositions. Specifically, we exploit the flows
\begin{equation}\label{eq:Flows of SDEs}
     \begin{split}
         \phi^\text{Strat}(t,x) &:= x + \int_0^tV(\phi^\text{Strat}(s,x))\circ dM_s,\\
         \phi^\text{Marcus}(t,x) &:= x + \int_0^tV(\phi^\text{Marcus}(s,x))\diamond dM_s,
     \end{split}
 \end{equation}
(where $M_t$ is a semimartingale and $V$ a vector field) which satisfy a Wong--Zakai theorem~\cite[Theorem 6.5]{kurtz1995stratonovich}. We construct a robust version of the filter and establish its continuity in two regimes:

\begin{itemize}
\item \textbf{Finite jump activity} (Section~\ref{sec:Finite}). When the jump measures have finitely many atoms on compact time intervals, we prove that the filter is Lipschitz continuous jointly in the Stratonovich rough-path lift of the observation and the counting-measure paths, with respect to the rough $p$-variation and $p$-variation topologies on cadlag path space. Crucially, this result does not require the separability condition on the jump coefficients imposed by \cite[Assumption~5.12]{allan2025rough}.
\item \textbf{Infinite jump activity} (Section~\ref{sec:infinite}). When the jump measures may have countably many atoms on compact intervals, we prove continuity of the filter in a modified rough $p$-variation topology ($\beta_p$) adapted to cadlag geometric rough paths constructed via the Marcus (log-linear) lift. In this regime we do require the separability assumption, for the same reasons as~\citep{allan2025rough}.
\end{itemize}

\subsection{Comparison with Allan et al.~\citep{allan2025rough}}

The work of Allan et al.~\citep{allan2025rough} is the closest to ours. They construct a robust filter using non-geometric (It\^o) rough paths and new results on discontinuous RDEs. Our geometric approach via Stratonovich and Marcus integrals leads to several structural differences:

\begin{enumerate}
\item \textit{Separability.} In~\citep{allan2025rough}, the separability condition \cite[Assumption~5.12]{allan2025rough} requiring the jump coefficients to factor as $f(t,X_t,Y_t,x)=\sum_i h^i(t,X_t,Y_t)\,g^i(t,x)$ is needed in both the finite and infinite jump regimes. While the Weierstrass approximation theorem guarantees that such factorisations are dense, there is no guarantee that the limit function yields a robust filter. We eliminate this assumption entirely in the finite-activity case (Section~\ref{sec:Finite}): the jump discontinuities contribute paths of finite variation that can be controlled directly, without forming a rough-path lift of the jump driver.

\item \textit{Pathwise convergence.} Since It\^o integrals converge only in probability and are non-anticipative, the approach of~\citep{allan2025rough} requires knowledge of the law of the driving noise. Our Stratonovich and Marcus integrals, being limits of ordinary integrals against smooth approximations (Wong--Zakai), provide pathwise convergence guarantees. The geometric rough-path lift of the observation can therefore be constructed directly from discrete samples without knowledge of the underlying probability measure.

\item \textit{Geometric structure.} The Stratonovich and Marcus lifts produce geometric rough paths~\citep{friz2012}, which satisfy the shuffle product identity and Chen's relation. Consequently, nonlinear functionals of the observation path admit representations as linear combinations of its signature~\citep{chevyrev2016primer}, opening a route to efficient numerical approximation of the filter. By contrast, the It\^o lift of~\citep{allan2025rough} yields non-geometric (branched) rough paths whose algebraic structure is considerably more involved~\citep{gubinelli2010ramification}. Furthermore, the flows generated by Stratonovich and Marcus integrals are diffeomorphisms~\cite[Proposition~4.3]{kurtz1995stratonovich}, making our framework directly extensible to signal--observation models on manifolds---a setting in which It\^o calculus is cumbersome due to the failure of the chain rule.
\end{enumerate}

\subsection{Organisation}

Section~\ref{sec:notation} collects notation and recalls the definitions of Stratonovich and Marcus integrals, geometric rough paths, and cadlag rough-path spaces. Section~\ref{sec:Robust Filter} formulates the robust filtering problem in generality via the Kallianpur--Striebel formula and a Girsanov change of measure. Section~\ref{sec:Flows} illustrates the flow-decomposition strategy in a scalar observation model with jumps, following~\citep{davis1987pathwise}. Section~\ref{sec:Finite} contains our main results in the finite-activity regime, and Section~\ref{sec:infinite} treats the infinite-activity case.

\section{Preliminaries}\label{sec:notation}

    We begin with some preliminaries that we will build on for our main results in Sections~\ref{sec:Finite} and~\ref{sec:infinite}.
    \subsection{Definitions}
    We define Stratonovich integrals and geometric rough paths first to emphasize our pathwise approach. For an in-depth discussion on  rough path theory, we refer the reader to resources such as~\citep{friz2012,friz2014course,friz2010multidimensional,geng2021introduction}.
    \begin{definition}
        Let $M$ and $N$ be two continuous semi-martingales. We denote the Stratonovich integral of $M$ with respect to $N$ as $\int_0^tM_s\circ dN_s$, and define it as
        \begin{equation*}
            \int_0^tM_s\circ dN_s = \int_0^t M_sdN_s+\frac{1}{2}\langle M,N\rangle_t,
        \end{equation*}
        where the integral on the right side is the It\^o integral.
    \end{definition}
    We now define geometric rough paths and Stratonovich lifts:
    \begin{definition}
        Equip the space of $\alpha$-H\"older rough paths $\mathcal{C}^\alpha([0,T],\mathbb{R}^d)$ with the metric
        \begin{equation*}
            \rho_\alpha(\rough{X},\tilde{\rough{X}}) := \sum_{n=1}^{N_\alpha}\sup_{0\leq s\leq t}\frac{|X^n_{s,t}-\tilde{X}^n_{s,t}|}{(t-s)^{n\alpha}},
        \end{equation*}
        where $N_\alpha = \lfloor1/\alpha\rfloor$.
        Then, the space of geometric rough paths is the closure of the rough lift of smooth paths in $\rho_\alpha$.
    \end{definition}
    We can now define the Stratonovich lift of a continuous semi-martingale:
    \begin{definition}
        Let $M$ be a continuous semi-martingale, its Stratonovich lift is defined as:
        \begin{equation}
            \rough{M}_t: = \left(M_t,\int_0^tM_s\otimes\circ dM_s\right).
        \end{equation}
    \end{definition}
    \begin{remark}
        A direct consequence of the definition of the Stratonovich lift and the Wong-Zakai theorem is that the Stratonovich lift of a semi-martingale is indeed a geometric rough path (e.g. see \cite[Proposition 3.6]{friz2014course}).
    \end{remark}
    We use the notion of Marcus differential equations in Section \ref{sec:infinite}. We define the solution of a Marcus SDE as in \citep{kurtz1995stratonovich,friz2012} which was introduced by Marcus in \citep{marcus2003modeling,marcus1981modeling}.
    First, let the jumps of a cadlag path $x$ be represented by
    \begin{equation*}
        \Delta x_t := x_{t-}-x_t,
    \end{equation*}
    where $x_{t-} := \lim_{s\downarrow t}x_s$.
    \begin{definition}
        Let $(\Omega,\mathcal{F},(\mathcal{F}_t)_{t\geq 0},\mathbb{P})$ be a filtered probability space, and $Z_t$ a $k$-dimensional semi-martingale. Let $f:\R^d\rightarrow \R^{d\times k}$ be a $C^1$ function, then we call $X_t$ the solution to the Marcus SDE:
        \begin{equation}\label{eq:marcus SDE}
            X_t = X_0 + \int_0^tf(X_t)\diamond dZ_t,
        \end{equation}
        if $X_t$ satisfies
        \begin{multline*}
            X_t = X_0 + \int_0^tf(X_{s-})dZ_s + \frac{1}{2}\int_0^tf'f(X_s)d[Z,Z]_s^c +\sum_{0\leq s\leq t}\left\{\phi(\Delta X_sf,Z_{s-})-Z_{s-}-f(Z_{s-})\Delta X_{s-}\right\},
        \end{multline*}
        where $[Z,Z]^c$ is the continuous part of the quadratic variation of the semi-martingale $Z$, and $\phi(g,x)$ is the time 1 solution to the ODE:
        \begin{equation*}
            \dot{y} = g(y),\hspace{10pt} y(0) = x.
        \end{equation*}
    \end{definition}

    \begin{remark}
        The advantage of using Marcus integration is that if $f\in C^3$, we recover the chain rule for cadlag semi-martingales (which both It\^o and Stratanovich integrals do not satisfy).
    \end{remark}

    \begin{remark}
        The Marcus integral reduces to the Stratonovich integral in the absence of jumps.
    \end{remark}

    Denote by $C([s,t],E)$ and $D([s,t],E)$ the space of continuous and c\'adl\'ag paths (respectively) from the interval $[s,t]$ to the metric space $(E,d)$. We recall two pseudo-metrics on these spaces.
    \begin{definition}
        Define the $p$-variation metric:
        \begin{equation}
            d_{p;[s,t]}(x,\bar{x}) := \|x-\bar{x}\|_{p-\text{var};[s,t]},
        \end{equation}
        and the Skorokhod metric
        \begin{equation}
            \sigma_{p;[s,t]}(x,\bar{x}) := \inf_{\lambda\in \Lambda_{[s,t]}}|\lambda|\vee d_{p;[s,t]}(x\circ\lambda,\bar{x}),
        \end{equation}
        where $\Lambda_{[s,t]}$ is the set of all bijections from $[s,t]$ to itself which are strictly increasing, and $|\lambda| := \sup_{u\in[s,t]}|\lambda(u)-u|$.
    \end{definition}
    Our goal is to convert elements in $D([0,T],E)$ to continuous paths by extending the path's jump times and interpolating with a continuous function. To this end, we now define path functions and the concept of admissibility.
    \begin{definition}
        A path function on $E$ is a map $ (a,b) \in J \mapsto \phi(a,b) \in  C([0,1],E)$
        where $J\subseteq E\times E$ is some subset, and $\phi(a,b)_0=a$ and  $\phi(a,b)_1=b$ for all $(a,b)\in J$. For some $x\in D([0,T],E)$ and a path function $\phi$, we call the pair $(x,\phi)$ admissible if $(x_{t-},x_{t})\in J$ for all jump times of $x$. The space of all admissible pairs of finite $p$-variation will be denoted $\mathcal{D}^{p-\text{var}}(E)$
    \end{definition}
    \begin{remark}
    These path functions are the interpolations we will use. In particular, when we have $E = G^N(\mathbb{R}^d)$, we will use the log-linear path function which corresponds to the Marcus integral. The log-linear path function is given by
    \begin{equation}\label{eq:loglin}
        \phi(a,b)_t = e^{t\log(a)+(1-t)\log(b)},
    \end{equation}
    where $\exp$ and $\log$ are taken in $T^N(\mathbb{R}^d)$.
    \end{remark}

    For any admissible pair $(x,\phi)$, define a summable sequence $\{r_n\}$ and let $r :=\sum_{k=1}^\infty r_k<\infty$, and order the jump times of $x$ by the size of the jump, i.e., $d(x_{t_1-},x_{t_1})>d(x_{t_2-},x_{t_2})>\cdots$. Define the increasing c\`adl\`ag function $\tau:[0,T]\rightarrow[0,T+r]$ by
    \begin{equation}\label{eq:time-ext}
        \tau(t) := t + \sum_{k=1}^\infty r_k \chi_{\{t_k\leq t\}},
    \end{equation}
    and the continuous path $\hat{x}\in C([0,T+r],E)$ by
    \begin{equation}\label{eq:con-interp}
        \hat{x}_s := \begin{cases}
            x_s & \text{if} \hspace{10pt} s = \tau(t) \text{ for some } t\in [0,T],\\
            \phi(x_{t_k-},x_{t_k})_{(s-\tau(t_{k-}))/r_k} &\text{if} \hspace{10pt} s\in[\tau(t_{k-}),\tau(t_k)) \text{ for some } k\in \mathbb{Z}_+.
        \end{cases}
    \end{equation}
    Let $\tau_r(t) = t(T+r)/T$ be the rescaling of domains, and define $x^\phi\in C^{p-\text{var}}([0,T],E)$ as
    \begin{equation}\label{eq:cont-from-discont}
        x^\phi = \hat{x}\circ \tau_r.
    \end{equation}
    Defining $\tau_x = \tau^{-1}_r\circ \tau$, one can obtain $x$ by $x = x^\phi\circ\tau_x$.

    We are now ready to define RDEs driven by general cadlag rough paths. We equip the free nilpotent Lie group $G^N(\mathbb{R}^d)$ with the Carnot-Carath\'eodory norm $\|\cdot\|_{CC}$ (see \cite[Section 2.5]{friz2012}) and its induced metric $d_{CC}$. The increments of the $G^N(\mathbb{R}^d)$ path are defined as $\rough{x}_{s,t} = \rough{x}_s^{-1}\rough{x_t}$, and the p-variation is given as
    \begin{equation}
        \rho_{p;[s,t]}(\rough{x},\rough{\hat{x}}) : = \max_{1\leq k\leq N}\sup_{\mathcal{P}\subset [s,t]}\left(\sum_{t_i\in \mathcal{P}}|\rough{x}_{t_i,t_{i+1}}^k-\rough{\hat{x}}_{t_i,t_{i+1}}^k|^{p/k}\right)^{k/p}
    \end{equation}

    Recall the path spaces  $D^{p-\text{var}}([0,T],G^N(\mathbb{R}^d))$ and $C^{p-\text{var}}([0,T],G^N(\mathbb{R}^d))$, i.e., the space of cadlag and continuous paths (respectively) with finite $p$-variation for some $1\leq p<N+1$.

    \begin{definition}\label{def:marcus-lift}
    Given a $\mathbb{R}^d$-valued path $x$, define its Marcus lift as:
    \begin{equation}\label{eq:marcus-lift}
        \mathbf{x}_{s,t} := \left(x_{t,s},\int_s^t x_{s,r}\otimes\diamond dx_r\right).
    \end{equation}
    \end{definition}
    It is easy to check that the Marcus lift satisfies:
    \begin{equation*}
        \log\Delta{\rough{x}}_t \in \mathbb{R}^d\oplus\{0\}.
    \end{equation*}

    \begin{definition}
        For an admissible pair $(\mathbf{x},\phi)\in \mathcal{D}^{p-\text{var}}(G^N(\mathbb{R}^d))$ and a family of vector fields $V = (V_1,\dots,V_d)$ in $\text{Lip}^{\gamma+m-1}(\mathbb{R}^e)$ for some $\gamma>p$ and $m\geq1$, let $\bar{y} \in C^{p-\text{var}}([0,T],\mathbb{R}^e)$ be the solution to the continuous RDE
        \begin{equation*}
            d\bar{y}_t = V(\bar{y}_t)d\mathbf{x}^\phi_t, \hspace{10pt} \bar{y}_0 = y_0\in \mathbb{R}^e.
        \end{equation*}
        Define the solution $y\in D^{p-\text{var}}([0,T],\mathbb{R}^e)$ to the canonical RDE
        \begin{equation}\label{eq:canonical-RDE}
            dy_t = V(y_t)\diamond d(\mathbf{x}_t,\phi), \hspace{10pt} y_0\in \mathbb{R}^e,
        \end{equation}
        as $y = \bar{y}\circ\tau_\mathbf{x}$.

        In particular, when $\phi$ is the log linear path defined in equation \eqref{eq:loglin}, we denote the RDE by
        \begin{equation*}
            dy_t = V(y_t)\diamond d\mathbf{x}_t, \hspace{10pt} y_0\in \mathbb{R}^e.
        \end{equation*}
    \end{definition}
    Note that the solution $\tilde{y}_t$ depends on the choice of series $\sum_kr_k$ and the path function $\phi$. On the other hand, the solution $y_t$ to equation \eqref{eq:canonical-RDE} is independent of this choice, since the map $\tau_\mathbf{x}$ essentially `cuts off' the added path sections used to fill-in the fictitious time. Next, we need to define a new metric to show continuity in rough spaces.

    \begin{definition}
        For $(\mathbf{x},\phi),(\mathbf{\tilde{x}},\tilde{\phi})\in \mathcal{D}^{p-\text{var}}([0,T],G^N(\mathbb{R}^d))$, define
        \begin{equation}
            \alpha_p(\mathbf{x},\mathbf{\tilde{x}}) := \lim_{\delta\rightarrow0}\sigma_{p}(\mathbf{x}^{\phi,\delta},\mathbf{\tilde{x}}^{\tilde{\phi},\delta}),
        \end{equation}
        and
        \begin{equation}
            \beta_p(\mathbf{x},\mathbf{\tilde{x}}) := \lim_{\delta\rightarrow0}\rho_{p}(\mathbf{x}^{\phi,\delta},\mathbf{\tilde{x}}^{\tilde{\phi},\delta}),
        \end{equation}
        where $\mathbf{x}^{\phi,\delta}$ is the same as that in equation \eqref{eq:cont-from-discont}, but with the series $\sum_k \delta\cdot r_k$ instead of $\sum_kr_k$ for some $\delta>0$.
    \end{definition}

   \subsection{Assumptions}
     For the system \eqref{eq:general model} to have a unique strong solution, we require the following assumptions on the coefficients:
     \begin{assumption}\label{as:lin boundedness}
         The coefficients have at most linear growth, i.e., there exists a constant $K>0$ such that:
         \begin{equation*}
         \begin{split}
             |b_1(t,x,y)|+|b_2(t,x,y)| + |\sigma_0(t,x,y)|+|\sigma_1(t,x,y)|+|\sigma_2(t,y)|\leq K(1+|x|+|y|),\\
             \|f_1(t,x,y,\cdot)\|_{L^2(\nu_1)}+\|f_2(t,y,\cdot)\|_{L^2(\nu_2)} + \|f_3(t,x,y,\cdot)\|_{L^2(\nu_2)}\leq K(1+|x|+|y|).
         \end{split}
         \end{equation*}
         for all $t\in[0,T]$, $x\in\R^{d_X}$ and $y\in \R^{d_Y}$.
     \end{assumption}

     \begin{assumption}\label{as:initial values}
         The initial values of the SDE system \eqref{eq:general model}, $X_0$ and $Y_0$, are $\mathcal{F}_0$-measureable and square integrable.
     \end{assumption}

     \begin{assumption}\label{as:jumps}
         The random measures $N_p$ and $N_\lambda$ do not jump at the same time almost surely. Let $D_p$ and $D_\lambda$ denote the set of jump times of $N_p([0,t],\mathbb{U}_p)$ and $N_\lambda([0,t],\mathbb{U}_\lambda)$ $\omega$ by $\omega$, then we have
         \begin{equation}
             \mathbb{P}(\{\omega\in\Omega:t\in D_p(\omega)\cap D_\lambda(\omega),\text{ for any } t\in[0,T]\})=0.
         \end{equation}
     \end{assumption}

     \begin{assumption}\label{as:invert sigma_2}
         The coefficient $\sigma_2:[0,T]\times\R^{d_Y}\rightarrow\R^{d_Y\times d_Y}$ takes values in the set of invertible matrices, and
         \begin{equation*}
             \sup_{t\in[0,T],y\in\R^{d_Y}}|\sigma_2^{-1}(t,x)|<\infty,
         \end{equation*}
         and the function $h:[0,T]\times\R^{d_X+d_Y}\rightarrow\R^{d_Y}$ defined as
         \begin{equation*}
             h(t,x,y) := \sigma_2^{-1}(t,y)\left(b_2(t,x,y) + \int_{\mathbb{U}_\lambda}f_2(t,y,u)(1-\lambda(t,x,u))\nu_2(du)\right),
         \end{equation*}
         satisfies
         \begin{equation*}
\sup_{(x,y)\in\R^{d_X+d_Y}}\int_0^T \vert h(t,x,y)\vert dt<\infty.
         \end{equation*}
     \end{assumption}

     \begin{assumption}\label{as:lambda integrability}
         The function $\lambda$ is uniformly bounded from above and below away from 0, and satisfies the integrability condition:
         \begin{equation}\label{eq:lambda integrability}
            \sup_{t\in[0,T],x\in\R^{d_X}} \int_{\mathbb{U}_\lambda}\frac{(1-\lambda(t,x,u))^2}{\lambda(t,x,u)}\nu_2(du)<\infty.
         \end{equation}
     \end{assumption}

    \begin{remark}
        The integrability condition \eqref{eq:lambda integrability} is fairly standard in stochastic filtering (see \cite[Assumption 5.5]{allan2025rough} and \cite[Assumption($\mathbf{H}_\lambda$)]{qiaocorr}). We note that Assumption \ref{as:lambda integrability} with $\lambda$ identically equal to 1 allows for infinite jumps as well (which we consider in Section \ref{sec:infinite}).
    \end{remark}
\subsection{Robustness of the Filter}\label{sec:Robust Filter}
    As observed in the introduction, given the signal-observation system~\eqref{eq:general model}, the problem of \textit{stochastic filtering} (see, e.g.~\citep{bain2009fundamentals,crisan2011oxford,kallianpur2013stochastic}) is to compute the conditional law of the signal $X_t$ given the filtration of the observation $\mathcal{F}^Y_t$ up to some time $t$. To be precise, given a measurable test function $f:\R^{d_X+d_Y}\rightarrow\R,$ the {\it filter} $\pi_t(f)$ is defined as the conditional expectation,
     \begin{equation*}
         \pi_t(f):=\mathbb{E}[f(X_t,Y_t)\vert\mathcal{F}^Y_t].
     \end{equation*}
   Under Assumptions \ref{as:lin boundedness}--\ref{as:lambda integrability}, for any bounded measureable function $f:\R^{d_X+d_Y}\rightarrow\R$, the Kallianpur-Striebel formula~\citep{crisan2011oxford} shows that
    \begin{equation}
        \pi_t(f) = \frac{\mathbb{\tilde{E}}[f(X_t,Y_t)\Lambda_t\vert\mathcal{F}_t^Y]}{\tilde{\mathbb{E}} [\Lambda_t\vert\mathcal{F}^Y_t]} =: \frac{p_t(f)}{p_t(1)}.
    \end{equation}

 In the following sections, we construct a version of the filter $\pi_t(f)$ which is continuous with respect to the observation process $Y$ under suitable topologies, the latter depending on the whether the common jump noise processes in~\eqref{eq:general model} have finite or infinite activity. This is known as a {\it robust filter}. The contours of our overall strategy for constructing a robust filter are to show that there exists a continuous version of the function $p_f(f)$, which we achieve by exploiting a measure change using Girsanov's theorem. Define a new reference measure $\tilde{\mathbb{P}}\ll \mathbb{P}$ using the Radon-Nikodym derivative:
    \begin{equation}
        \left.\frac{d\tilde{\mathbb{P}}}{d\mathbb{P}}\right\vert_{\mathcal{F}_T} := \Lambda_T^{-1},
    \end{equation}
    where the process $\Lambda_t := \exp(I_t)$, and
    \begin{multline*}
        I_t : =\int_0^th(s,X_s,Y_s)^TdW_s + \frac{1}{2}\int_0^t|h(s,X_s,Y_s)|^2ds + \int_0^t\int_{\mathbb{U}_\lambda}\log(\lambda(s,X_{s-},x))\tilde{N}_\lambda(ds,dx) \\ +\int_0^t\int_{\mathbb{U}_\lambda}(1-\lambda(s,X_{s-},x) + \lambda(s,X_{s-},x)\log(\lambda(s,X_{s-},x))\nu_2(dx)ds,
    \end{multline*}
    for $t\in[0,t]$ and the function $h$ is given in \eqref{as:invert sigma_2}. Note that in the infinite jump case, when $\lambda$ is identically 1, the last two terms in $I_t$ vanish and the transform is valid.

    Under the new measure $\tilde{\mathbb{P}}$, the signal-observation system \eqref{eq:general model} becomes:
    \begin{align}\label{eq:gris general model}
        dX_t &=  \tilde{b}_1(t,X_t,Y_t)dt + \sigma_0(t,X_t,Y_t)\circ dB_t + \sigma_1(t,X_t,Y_t)\circ d\tilde{W}_t \\ \nonumber &+ \int_{\mathbb{U}_p}f_1(t,X_{t-},Y_{t-},x)\tilde{N}_p(dt,dx) + \int_{\mathbb{U}_\lambda}f_3(t,X_{t-},Y_{t-},x)\tilde{N}(dt,dx), \\
        dY_t &:= \sigma_2(t,Y_t)\circ d\tilde{W}_t + \int_{\mathbb{U}_\lambda}f_2(t,Y_{t-},x)\tilde{N}(dt,dx).
    \end{align}

\section{Robustness for Scalar Systems}\label{sec:Flows}
 As a precursor to our main results, in this section we demonstrate the use of flows (such as in \eqref{eq:Flows of SDEs}) in the setting of jump-diffusion observation and signal models. The forthcoming results in this section clearly demonstrate the need for rough flows. These results closely follow \citep{davis1987pathwise}, who established the robustness in the one-dimensional setting, albeit with signal-observation diffusion models without jumps.

    To keep the exposition straightforward, we consider a simpler model (following \citep{qiaocorr}) which only has common Brownian noise, and the observation process is one-dimensional (in particular the common Brownian noise $W_t$ is also scalar):
    \begin{align}\label{eq:signal-obs-1d-corr-P}
    dX_t &= b_1(t,X_t)dt+\sigma_0(t,X_t)\circ dB_t + {\sigma}_1(W_t,X_t)\circ dW_t + \int_{|x|<1}f_1(t,X_{t-},x)\tilde{N}_p(dt,dx),\\
    dY_t &= \tilde{b}_2(W_t,X_t,Y_t)dt + {\sigma}_2(W_t,Y_t)\circ dW_t + \int_{|x|<1}f_2(t,Y_t,x)\tilde{N}_\lambda(dt,dx).
\end{align}

We assume that the SDEs have unique solutions (see \citep{qiaocorr}) and that $B_.,W_.,N_p,N_\lambda$ are mutually independent. We follow the notation used in the uncorrelated case.

As before, we use a Girsanov-transform to change $N_\lambda$ to unit intensity and eliminate the drift term in the observation process. To do this, define
\begin{multline}\label{eq:gis-mult-1d-corr}
    \Lambda_t^{-1} := \exp\left\{-\int_0^th(W_s,X_s,Y_s)\circ dW_s+\frac{1}{2}\int_0^t|h(W_s,X_s,Y_s)|^2ds \right. \\
    \left.- \int_0^t\int_{|x|<1}\log\lambda(s,X_{s-},x)N_\lambda(dt,dx) - \int_0^t\int_{|x|<1}(\lambda(s,X_{s-},x)-1)ds\nu_2(dx) \right\},
\end{multline}
where $h(W_t,X_t,Y_t) := \sigma^{-1}_2(W_t,Y_t)b_2(W_t,X_t,Y_t)$, and a new measure $\mathbb{\tilde{P}}$ via
\(
    \frac{d\mathbb{\tilde{P}}}{d\mathbb{P}} = \Lambda_t^{-1}.
\)
Under $\mathbb{\tilde{P}}$, we have
\begin{equation}
    \tilde{W}_t = W_t + \int_0^th(W_s,X_s,Y_s)ds
\end{equation}
is a Brownian motion,
\begin{equation}
    \tilde{N}(dt,dx) = N_\lambda(dt,dx)-dt\nu_2(dx)
\end{equation}
is a compensated Poisson random measure, and the signal-observation processes become
\begin{align}\label{eq:signal-obs-1d-corr}
    dX_t &= b_3(t,X_t,Y_t)dt+\sigma_0(t,X_t)\circ dB_t + \tilde{\sigma}_1(\tilde{W}_t,X_t)\circ d\tilde{W}_t + \int_{|x|<1}f_1(t,X_{t-},x)\tilde{N}_p(dt,dx),\\
    dY_t &= \tilde{\sigma}_2(\tilde{W}_t,Y_t)\circ d\tilde{W}_t + \int_{|x|<1}f_2(t,Y_t,x)\tilde{N}(dt,dx).
\end{align}
where
\begin{equation*}
    b_3(t,X_t,Y_t) = b_1(t,X_t)-\sigma_1(\tilde{W}_t,X_t)h(t,X_t,Y_t).
\end{equation*}
Furthermore, we also have that the Radon-Nikodym derivative $\Lambda_t$ is the solution of an SDE,
\begin{theorem}
    $\Lambda_t$ satisfies the following equation:
    \begin{equation}
        \Lambda_t = 1 + \int_0^t\Lambda_sh(W_s,X_s,Y_s)\circ d\tilde{W}_s+\int_0^t\int_{|x|<1}\Lambda_s(\lambda(s,X_s,x)-1)\tilde{N}(ds,dx).
    \end{equation}
\end{theorem}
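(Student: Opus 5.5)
The plan is to prove the identity by It\^o's formula for jump semimartingales applied to $\Lambda_t=\exp(L_t)$, where $L_t:=-\log\Lambda_t^{-1}$ is read off from \eqref{eq:gis-mult-1d-corr}. I will then identify the resulting expression as the Dol\'eans-Dade exponential of the $\tilde{\mathbb{P}}$-martingale
\begin{equation*}
Z_t:=\int_0^t h(W_s,X_s,Y_s)\,d\tilde W_s+\int_0^t\int_{|x|<1}(\lambda(s,X_{s-},x)-1)\,\tilde N(ds,dx).
\end{equation*}

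\textbf{Step 1: rewrite the exponent under $\tilde{\mathbb{P}}$.} I will express $L_t$ through the $\tilde{\mathbb{P}}$-drivers. For the continuous part, substitute $dW_s=d\tilde W_s-h\,ds$ and convert between Stratonovich and It\^o integrals via the definition of the Stratonovich integral. The bracket $\langle h(W,X,Y),W\rangle$ is computed from the dynamics of $W,X,Y$; it is a finite-variation $ds$-term. For the jump part, write $N_\lambda(ds,dx)=\tilde N(ds,dx)+\nu_2(dx)\,ds$. The goal is the canonical form
\begin{equation*}
L_t=\int_0^t h\,d\tilde W_s-\tfrac12\int_0^t|h|^2ds+\int_0^t\!\!\int\log\lambda\,\tilde N(ds,dx)+\int_0^t\!\!\int(\log\lambda-\lambda+1)\,\nu_2(dx)\,ds .
\end{equation*}
Every $ds$-term other than $-\tfrac12|h|^2$ and the jump compensator should cancel. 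I expect the bookkeeping here to be the main obstacle. It requires tracking the sign conventions in \eqref{eq:gis-mult-1d-corr} (the $\circ dW$ integral, and the sign of $\int(\lambda-1)$), and absorbing the Stratonovich correction $\tfrac12\langle h,W\rangle$ into the drift. I would first carry out the computation in It\^o form and only at the end decide how the $\circ$ in the statement is to be read.

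\textbf{Step 2: It\^o's formula.} Apply It\^o's formula for semimartingales with jumps to $e^{L}$:
\begin{equation*}
d\Lambda_t=\Lambda_{t-}\,dL_t+\tfrac12\Lambda_{t-}\,d[L]^c_t+\Lambda_{t-}\bigl(e^{\Delta L_t}-1-\Delta L_t\bigr).
\end{equation*}
Here $d[L]^c=|h|^2dt$, which cancels the $-\tfrac12|h|^2dt$ drift. At an atom of $N_\lambda$ we have $\Delta L=\log\lambda$. Hence the total jump contribution is $\Lambda_{s-}(\lambda-1)N_\lambda(ds,dx)$. Compensating this with $\nu_2(dx)\,ds$ produces $\Lambda_{s-}(\lambda-1)\,\tilde N(ds,dx)$ together with a $ds$-term $\Lambda(\lambda-1)\nu_2$. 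That $ds$-term exactly cancels the remaining compensator drift $(\log\lambda-\lambda+1)$ once it is combined with the $\log\lambda$ part of $\tilde N$. Since $X$ has no common jumps with $N_\lambda$ (by independence of $N_p$ and $N_\lambda$), $\lambda(s,X_{s-},x)$ may be replaced by $\lambda(s,X_s,x)$ up to a $ds\,\nu_2$-null set. This yields
\begin{equation*}
\Lambda_t=1+\int_0^t\Lambda_s h\,d\tilde W_s+\int_0^t\!\!\int\Lambda_{s-}(\lambda-1)\,\tilde N(ds,dx).
\end{equation*}
Integrability of the stochastic integrals follows from Assumption~\ref{as:lambda integrability} and the boundedness of $h$ from Assumption~\ref{as:invert sigma_2}.

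\textbf{Step 3: Stratonovich form.} Finally, write $\Lambda_sh\,d\tilde W_s=\Lambda_sh\circ d\tilde W_s-\tfrac12\,d\langle\Lambda h,\tilde W\rangle_s$. This recovers the stated form, with the Stratonovich integral understood as in the definition of the Stratonovich integral (continuous martingale part only). The correction term is consistent with the Stratonovich convention used in the exponent of \eqref{eq:gis-mult-1d-corr}, so the $\circ$ in the statement is the same convention as there. If the conventions do not match exactly, the statement should be read with the It\^o integral of Step 2, which is the substantive content.
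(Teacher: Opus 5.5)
Your Step~2 is the argument the paper relies on. The paper omits the proof and cites Lemma~3.1 of Qiao--Duan, which is the same It\^o-formula computation, i.e.\ $\Lambda=\mathcal{E}(Z)$ with your $Z$. That step is correct once $\log\Lambda$ is in your canonical form.

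The problem is Step~1: the cancellation you expect does not happen for the formula as printed in \eqref{eq:gis-mult-1d-corr}. Use $\int h\circ dW=\int h\,dW+\tfrac12\langle h,W\rangle$, $dW=d\tilde W-h\,ds$ and $N_\lambda=\tilde N+\nu_2\,ds$. The printed exponent then gives $\log\Lambda_t=L^{\mathrm{can}}_t+A_t$, where $L^{\mathrm{can}}$ is your canonical expression and
\[
A_t=\tfrac12\langle h,W\rangle_t-\int_0^t|h_s|^2\,ds+2\int_0^t\!\!\int_{|x|<1}\bigl(\lambda(s,X_{s-},x)-1\bigr)\,\nu_2(dx)\,ds .
\]
It\^o's formula therefore adds $\int_0^t\Lambda_s\,dA_s$ to the right-hand side, and this is not zero in general. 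Already for constant $h\neq0$ and $\lambda\equiv1$, the printed $\Lambda_t^{-1}=\exp(-hW_t+\tfrac12h^2t)$ is not a $\mathbb{P}$-martingale.

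So the canonical form cannot be reached by bookkeeping; you must impose it as the definition. The justification is that the density the paper actually needs, and the one Qiao--Duan use, is the Dol\'eans-Dade exponential
\[
\Lambda_t^{-1}=\mathcal{E}\Bigl(-\int_0^{\cdot} h\,dW+\int_0^{\cdot}\!\!\int_{|x|<1}\bigl(\lambda^{-1}-1\bigr)\bigl(N_\lambda(ds,dx)-\lambda\,\nu_2(dx)\,ds\bigr)\Bigr)_t .
\]
This is what makes $\tilde W$ a Brownian motion and $\tilde N$ a compensated Poisson measure under $\tilde{\mathbb{P}}$. Its logarithm, rewritten in $\tilde W$ and $\tilde N$, is exactly your $L^{\mathrm{can}}$. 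State this explicitly and Step~1 becomes a two-line check.

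Step~3 also fails as stated. The conversion gives
\[
\Lambda_t=1+\int_0^t\Lambda_sh\circ d\tilde W_s-\tfrac12\langle\Lambda h,\tilde W\rangle_t+\int_0^t\!\!\int\Lambda_{s-}(\lambda-1)\,\tilde N(ds,dx),
\]
and the correction term does not vanish. Indeed $d\langle\Lambda h,\tilde W\rangle_s=\Lambda_s|h_s|^2\,ds+\Lambda_s\,d\langle h,\tilde W\rangle_s$, which equals $h^2\Lambda_s\,ds>0$ for constant $h\neq0$. The correction $\tfrac12\langle h,W\rangle$ hidden in the $\circ\,dW$ of the exponent is a different process and cancels nothing here.

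So the identity holds only with an It\^o integral against $\tilde W$, and with the predictable integrand $\Lambda_{s-}$ against $\tilde N$. At an atom $(s,x)$ of $N_\lambda$ one has $\Lambda_s=\lambda(s,X_{s-},x)\Lambda_{s-}\neq\Lambda_{s-}$. So, unlike $X_{s-}$, $\Lambda_{s-}$ cannot be replaced by its right value. Your displayed conclusion of Step~2 is the correct theorem; drop the claim that Step~3 recovers the stated form.

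Finally, Assumption~\ref{as:invert sigma_2} does not make $h$ bounded, but you do not need that. The identity is between semimartingales and uses only $\int_0^t|h_s|^2ds<\infty$ a.s.\ (already required for $\Lambda$ to be defined) together with Assumption~\ref{as:lambda integrability}.
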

\begin{proof}
    The proof follows that of Lemma 3.1 in \citep{qiaouncorr} and is omitted here.
\end{proof}

Of particular interest to our objective of establishing the robustness of nonlinear stochastic filters in the jump-diffusion setting, is the continuity of SDE flows. We defer the proofs of the following theorems,  which largely follow the results in~\citep{davis1987pathwise}, to the appendix.

\begin{theorem}\label{thm:sde-prop}
    Consider the SDE
    \begin{equation}\label{SDE flow}
        d\xi_t = Z(M_t,\xi_t)\circ dM_t,
    \end{equation}
    where $Z\in C_b^{\infty,\infty}$ are a class of commutative vector fields on $\mathbb{R}$ and $M_t$ is a continuous semi-martingale on $\mathbb{R}$ starting from 0.
    The SDE can be interpreted as:
    \begin{equation}\label{SDE func flow}
        f(\xi_t) = x + \int_0^t(Z(M_s)f)(\xi_t)\circ dM_s,
    \end{equation}
    where $f$ is any $C^\infty(\mathbb{R})$ function and $(Z(M_t)f)(x) := Z(M_t,x) \cdot \partial_x f(x)$ denotes the action of the vector field $Z(M_t, \cdot)$ at the point $x$ on the function $f$ evaluated at $x$.
    Then, the solution to the SDE started at $x$ is
    \begin{equation}\label{SDE soln}
        \xi_t = \exp \left(\int_0^{M_t}Z(s)ds \right)(x).
    \end{equation}
    where $\exp\left(\int_0^tZ(s)ds\right)(x)$ is the one parameter group of transformations defined as the solution to the ODE
    \begin{equation*}
        \frac{d}{dt}\phi_t(x) = Z(t,\phi_t(x))\hspace{25pt} \phi_0(x) = x,
    \end{equation*}
    and the solution is continuous in the sample paths of $M_t$.
\end{theorem}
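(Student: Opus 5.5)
The plan is to verify the claimed formula directly, using the Stratonovich chain rule together with the commutativity of the family $\{Z(m,\cdot)\}_{m\in\mathbb{R}}$. Write $\phi_u(x) := \exp\left(\int_0^{u}Z(s)\,ds\right)(x)$ for the solution at time $u$ of the time-dependent ODE $\frac{d}{du}\phi_u(x)=Z(u,\phi_u(x))$, $\phi_0(x)=x$. Since $Z\in C_b^{\infty,\infty}$, the map $(u,x)\mapsto\phi_u(x)$ is $C^\infty$ and globally defined for all $u\in\mathbb{R}$, including negative $u$ by running the ODE backwards. The candidate solution is $\xi_t=\phi_{M_t}(x)$.

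\textbf{Key step.} Because $F(u):=f(\phi_u(x))$ is $C^\infty$ in $u$, the Stratonovich (It\^o--Stratonovich) chain rule for the continuous semimartingale $M$ gives
\begin{equation*}
 f(\phi_{M_t}(x)) = f(x) + \int_0^t \partial_uF(M_s)\circ dM_s = f(x)+\int_0^t Z(M_s,\phi_{M_s}(x))\,\partial_xf(\phi_{M_s}(x))\circ dM_s,
\end{equation*}
where we used $\partial_u F(u)=\partial_xf(\phi_u(x))\,\partial_u\phi_u(x)=(Z(u)f)(\phi_u(x))$. This is exactly \eqref{SDE func flow} with $\xi_s=\phi_{M_s}(x)$.

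\textbf{Role of commutativity.} For a general time-dependent ODE the map $u\mapsto\phi_u$ is not a one-parameter group. What the chain rule needs, however, is only that $\partial_u\phi_u = Z(u)\circ\phi_u$ at the \emph{current} level $u=M_s$, regardless of the path by which $M$ reached that value. This holds for the time-$u$ flow of the ODE. Commutativity of the fields (automatic in dimension one once the fields are written as $Z(u,x)\partial_x$ with the brackets vanishing, which is the assumed hypothesis) guarantees the needed consistency. The flow from level $a$ to level $b$ is then $\phi_b\circ\phi_a^{-1}$, independent of the intermediate path, so the composition law $\phi_{M_t}=(\phi_{M_t}\circ\phi_{M_s}^{-1})\circ\phi_{M_s}$ is compatible with the SDE's flow property. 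I would check this by differentiating $\phi_b\circ\phi_a^{-1}$ in $a$ and using $[Z(a),Z(b)]=0$. Uniqueness then follows from standard uniqueness of Stratonovich SDEs with smooth bounded coefficients with bounded derivatives, after converting to It\^o form with drift $\tfrac12(\partial_u Z + Z\partial_xZ)(M_s,\xi_s)\,d\langle M\rangle_s$ and Lipschitz coefficients. Taking $f(x)=x$ shows that $\phi_{M_\cdot}(x)$ solves \eqref{SDE flow}.

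\textbf{Continuity.} The solution map $M_\cdot\mapsto\xi_\cdot=\Phi(M_\cdot)$ with $\Phi(u)=\phi_u(x)$ is a composition with a fixed smooth function. Since $|\partial_u\phi_u(x)|\le\|Z\|_\infty$, the map $\Phi$ is Lipschitz uniformly in $x$, so
\begin{equation*}
 \sup_{t\le T}|\xi_t(M)-\xi_t(\tilde M)|\le \|Z\|_\infty\sup_{t\le T}|M_t-\tilde M_t|.
\end{equation*}
Hence the solution is continuous, indeed Lipschitz, in the uniform topology on sample paths. This lets us define $\xi$ pathwise for every continuous path, not only for semimartingales.

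\textbf{Main obstacle.} The main difficulty is the rigorous justification of the Stratonovich chain rule step, in particular the interpretation of \eqref{SDE func flow} and the independence of $\phi_b\circ\phi_a^{-1}$ from the path, since the ODE is time-inhomogeneous. I would handle this via the commutativity computation above, following \citep{davis1987pathwise}.
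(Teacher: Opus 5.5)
Your proposal is correct and follows the paper's proof essentially step for step. Both apply the Stratonovich chain rule to $u\mapsto f(\phi_u(x))$ at $u=M_t$, invoke uniqueness of strong solutions, and use the Lipschitz bound $|\phi_u(x)-\phi_{u'}(x)|\le\|Z\|_\infty|u-u'|$ for continuity in the path of $M$. Your commutativity digression is unnecessary and can be dropped without affecting the argument: the chain-rule step only uses $\partial_u\phi_u=Z(u,\phi_u)$, the cocycle identity $\phi_b\circ\phi_a^{-1}$ holds for any time-dependent ODE flow, and one-dimensional fields do not commute automatically.
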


Next, consider the SDE flow (as considered in \citep{davis1987pathwise})
\begin{equation}\label{eq:sde-flow-signal}
    \phi(t,x) = x+\int_0^t\sigma_1(\tilde{W}_t,\phi(s,x))\circ d\tilde{W}_s.
\end{equation}
The inverse flow is given by
\begin{equation}\label{eq:sde-inv-flow-signal}
    \psi(t,x) = \phi^{-1}(t,x) = x-\int_0^t\partial_x\psi(r,x)\sigma_1(\tilde{W}_r,x)\circ d\tilde{W_r}.
\end{equation}
We now consider the signal SDE \eqref{eq:signal-obs-1d-corr}, and compose it with the inverse flow given above. Using It\^o's theorem with Stratonovich integrals, we have
\begin{multline}\label{signal-inverse-flow}
    d\psi(t,X_t)= \partial_t\psi(t,X_t)dt + \partial_x\psi(t,X_t)\circ dX_t^c + \int_{|x|<1}[\psi(t,X_{t-}+f_1(t,X_t,x))-\psi(t,X_{t-})]\tilde{N}_p(dt,dx) \\+ \int_{|x|<1}[\psi(t,X_{t-}+f_1(t,X_t,x))-\psi(t,X_{t-})-f_1(t,X_t,x)\partial_x\psi(t,X_t)]dt\nu_1(dx).
\end{multline}
The first term can be expanded by differentiating \eqref{eq:sde-inv-flow-signal} with respect to $t$
\begin{align*}
    \partial_t\psi(t,X_t)dt = -\partial_x\psi(t,X_t)\sigma_1(\tilde{W}_t,X_t)\circ d\tilde{W}_t
\end{align*}
Plugging in $dX_t^c = b_3(t,X_t,Y_t)dt + \sigma_0(t,X_t)\circ dB_t + \sigma_t(\tilde{W_t},X_t)\circ d\tilde{W_t}$ and $\partial_t\psi(t,X_t)$, we get
\begin{align*}
    d\psi(t,X_t)=\partial_x\psi(t,X_t)b_3(t,X_t,Y_t)dt+\partial_x\psi(t,X_t)\sigma_0(t,X_t)\circ dB_t\\+\int_{|u|<1}[ \psi(t,X_{t-}+f_1(t,X_t,x))-\psi(t,X_{t-})]
    {N}_p(dt,du)-\int_{|u|<1}f_1(t,X_t,x)\partial_x\psi(t,X_t)dt\nu_1(du).
\end{align*}
Redefining coefficients
\begin{align}\label{eq:uncorr-signal}
    d\tilde{X}_t &:= d\psi(t,X_t)\\\nonumber &= \tilde{b}_3(t,\tilde{X}_t,Y_t)dt + \tilde{\sigma}_0(t,\tilde{X}_t)\circ dB_t + \int_{|x|<1}\tilde{f}_1(t,\tilde{X}_t,x)N_p(dt,dx) - \int_{|x|<1}\tilde{g}_1(t,\tilde{X}_t,x)dt\nu_1(dx),
\end{align}
where
\begin{align*}
    \tilde{b}_3(t,x,y) &= \partial_x\psi(t,x)b_3(t,x,y), \\
    \tilde{\sigma}_0(t,x) &= \partial_x\psi(t,x)\sigma_0(t,x), \\
    \tilde{f}_1(t,z,x) &= \psi(t,z+f_1(t,z,x))-\psi(t,z), \\
    \tilde{g}_1(t,z,x) &= \partial_x\psi(t,z)f_1(t,z,x).
\end{align*}
We now write the solution to equation \eqref{eq:signal-obs-1d-corr} as $X_t = \phi(t,\tilde{X}_t)$. We can now explicitly see that $X_t$ is continuous in sample paths of $\tilde{W}_t$. From stochastic flow theory, we also have that the flow $\phi$ is a diffeomorphism almost surely. Since, we need a Lipschitz condition on $b_2$ and $\sigma_2$ in equation \eqref{eq:signal-obs-1d-corr-P}, we have, $X_t$ is continuous in sample paths of $Y_t$.

\begin{theorem}\label{thm:continuity-sde-wrt-obs}
    Define the SDEs:
    \begin{align*}
        dX_t^1 &= b(t,X^1_t,y_t^1)dt+\sigma(t,X^1_t)\circ dB_t + \sigma_1(\tilde{w}^1_t,X^1_t)\circ d\tilde{w}^1_t\\ &\qquad\qquad\qquad + \int_{|x|<1}f(t,X^1_t,x){n}^1(dt,dx)+\int_{|x|<1}g_1(t,X^1_{t-},x)\tilde{N}_p(dt,dx)\\
        dX_t^2 &= b(t,X^2_t,y_t^2)dt+\sigma(t,X^2_t)\circ dB_t + \sigma_1(\tilde{w}^2_t,X^2_t)\circ d\tilde{w}^2_t\\ &\qquad\qquad\qquad + \int_{|x|<1}f(t,X^2_t,x){n}^2(dt,dx)+\int_{|x|<1}g_1(t,X^2_{t-},x)\tilde{N}_p(dt,dx)
    \end{align*}
    where $y_.^i$s are sample paths of $Y_.$ and $\tilde{w}_.^i$ and $n^i$ are the corresponding samples of $\tilde{W}_.$ and ${N}$ respectively. We also make the following assumptions:
    \begin{itemize}
        \item $b$ satisfies the inequality
        \begin{equation}
            |b(t,x_1,y_1)-b(t,x_2y_2)|\leq C_t|x_1-x_2\|y_1-y_2|.
        \end{equation}
        \item The vector fields $\sigma_1(\tilde{w}_t,.)$ are bounded, and Lipschitz.
        \item The function $g_1$ satisfies the following inequality
        \begin{equation}
            |g_1(t,z_1,x)-g_1(t,z_2,x)|\leq K_{t,z}|z_1-z_2|\alpha(x), \hspace{25pt}
        \end{equation}
        where $\alpha:[0,1)\rightarrow[0,1]$ is a function such that $\int_0^t\int_{|x|<1}\alpha(x)N_p(ds,dx)<\infty.$ a.s.
    \end{itemize}
    Then $X_t^i$s satisfy the continuity inequality:
    \begin{equation}
        \|X_t^1-X_t^2\|_{L_1} \lesssim \int_0^t\int_{|x|<1}|n^1(dr,dx)-n^2(dr,dx)| + |\tilde{w}^1_t-\tilde{w}^2_t|.
    \end{equation}
    \begin{equation}
        \|X\|_{L_1} := \mathbb{E}[|X|].
    \end{equation}
\end{theorem}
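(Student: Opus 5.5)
The plan is to follow the flow-decomposition strategy of Section~\ref{sec:Flows}: remove the rough dependence on $\tilde{w}^i$ with the Stratonovich flow, then run a Gronwall argument on the transformed equations. Concretely, let $\phi^i(t,x)$ be the flow of \eqref{eq:sde-flow-signal} driven by $\tilde{w}^i$, with inverse $\psi^i$ as in \eqref{eq:sde-inv-flow-signal}. By Theorem~\ref{thm:sde-prop}, $\phi^i(t,\cdot)=\exp\bigl(\int_0^{\tilde{w}^i_t}\sigma_1(s,\cdot)\,ds\bigr)$ is determined pathwise by the value $\tilde{w}^i_t$. Since $\sigma_1$ is bounded and Lipschitz, Gronwall's inequality for this ODE gives
\[
|\phi^1(t,x)-\phi^2(t,x)|+|\psi^1(t,x)-\psi^2(t,x)|\lesssim|\tilde{w}^1_t-\tilde{w}^2_t|,
\]
with the same bound for $\partial_x\psi^i$. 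The flows and their inverses are also uniformly Lipschitz in $x$, uniformly over bounded $\tilde{w}$.

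Next set $\tilde{X}^i_t:=\psi^i(t,X^i_t)$. The computation leading to \eqref{eq:uncorr-signal} shows that $\tilde{X}^i$ solves an SDE with no $d\tilde{w}^i$ term:
\[
d\tilde{X}^i_t=\tilde{b}^i\,dt+\tilde{\sigma}^i\circ dB_t+\int\tilde{f}^i(t,\tilde{X}^i_{t-},x)\,n^i(dt,dx)+\int\tilde{g}^i\bigl(\tilde{N}_p\text{-terms}\bigr),
\]
where $\tilde{b}^i=\partial_x\psi^i\, b(t,\phi^i(t,\cdot),y^i)$, and the other coefficients are obtained in the same way by conjugating with $\phi^i$. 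The transformed coefficients inherit Lipschitz bounds in $x$. Their differences between $i=1,2$ are controlled by $|\tilde{w}^1_t-\tilde{w}^2_t|+|y^1_t-y^2_t|$, using the first step together with the stated hypotheses on $b$ and $g_1$.

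Then I take the difference $\tilde{X}^1_t-\tilde{X}^2_t$ and estimate its $L_1$ norm term by term. I would convert the $\circ dB$ term to It\^o form and bound it with Burkholder--Davis--Gundy; it may be necessary to use the $L^2$ norm first and then Jensen. The $\tilde{N}_p$ compensated integral is handled the same way, using the integrability of $\alpha$. The deterministic measures $n^i$ need a separate decomposition:
\[
\int\tilde{f}^1(\tilde{X}^1)\,dn^1-\int\tilde{f}^2(\tilde{X}^2)\,dn^2=\int\bigl(\tilde{f}^1(\tilde{X}^1)-\tilde{f}^2(\tilde{X}^2)\bigr)\,dn^1+\int\tilde{f}^2(\tilde{X}^2)\,d(n^1-n^2).
\]
The first piece goes into Gronwall, which is legitimate because $n^1$ has finite mass on $[0,t]$. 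The second piece is bounded by $\sup|\tilde{f}|\int|n^1-n^2|$. A Gronwall inequality for measures of bounded variation, applied to $\mathbb{E}\sup_{s\le t}|\tilde{X}^1_s-\tilde{X}^2_s|$, then yields
\[
\mathbb{E}|\tilde{X}^1_t-\tilde{X}^2_t|\lesssim\int\!\!\int|n^1-n^2|+|\tilde{w}^1-\tilde{w}^2|.
\]
Finally I write $X^1-X^2=\phi^1(\tilde{X}^1)-\phi^1(\tilde{X}^2)+\phi^1(\tilde{X}^2)-\phi^2(\tilde{X}^2)$. The first difference is controlled by the Lipschitz constant of $\phi^1$ and the bound just obtained; the second is controlled by the flow estimate from the first step.

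I expect the main obstacle to be the Gronwall step, because of its uniformity requirements. The transformed drift involves $\partial_x\psi^i$ and $\phi^i$ evaluated at the path $\tilde{w}^i$, and its $x$-Lipschitz constants must be bounded uniformly in time. Likewise, the jump term against $n^i$ is evaluated at $\tilde{X}_{t-}$, so the comparison has to be done at the level of left limits, and $K_{t,z}$ must be locally bounded. To close the estimate I would first try a localisation on $\{\sup|\tilde{X}^i|\le R\}$, with $R$ chosen using the linear-growth bounds.
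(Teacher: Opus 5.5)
Your skeleton matches the paper's proof: the Doss--Sussmann flow $\phi^i(t,\cdot)=\exp\bigl(\int_0^{\tilde w^i_t}\sigma_1(s,\cdot)\,ds\bigr)$, the transformed process $\tilde X^i=\psi^i(t,X^i_t)$, a BDG/Gronwall estimate for $\tilde X^1-\tilde X^2$, and then the split $X^1-X^2=[\phi^1(\tilde X^1)-\phi^1(\tilde X^2)]+[\phi^1(\tilde X^2)-\phi^2(\tilde X^2)]$. Your plan to work in $L^2$ and pass to $L^1$ by Jensen is sounder than the paper's step, which arrives at $\mathbb{E}\sup|\Delta|\le c\,\mathbb{E}\sup|\Delta|+k$ and then invokes Gronwall. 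The gap is in what you say the Gronwall step delivers. You correctly note that $\tilde b^i,\tilde\sigma^i,\tilde f^i,\tilde g^i$ differ between $i=1,2$ by $\lesssim|\tilde w^1_s-\tilde w^2_s|+|y^1_s-y^2_s|$ at each time $s$. These differences are source terms, integrated over $s\in[0,t]$ against $ds$, against $n^1(ds,dx)$, and inside the BDG bracket. What your argument actually yields is therefore
\[
\mathbb{E}\sup_{s\le t}|\tilde X^1_s-\tilde X^2_s|\le C\Bigl(\int_0^t\!\!\int|n^1-n^2|+\sup_{s\le t}|\tilde w^1_s-\tilde w^2_s|+\int_0^t|y^1_s-y^2_s|\,ds\Bigr).
\]
Your displayed bound differs from this in two ways. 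First, it drops the $y$-term without justification. That term is absent only if $b$ does not depend on $y$; the hypothesis on $b$, read literally as the product $C_t|x_1-x_2||y_1-y_2|$, forces $b=b(t)$, but that is not the Lipschitz reading you use. Second, $\tilde w$ enters through its whole path on $[0,t]$, not only through the terminal value $|\tilde w^1_t-\tilde w^2_t|$ that the statement requires.

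More care will not close this, because the terminal-value estimate is false in general. Take $b=\sigma=g_1=0$, $\sigma_1\equiv1$, $f(t,z,x)=\sin z$ (evaluated at $X_{t-}$), $X_0=0$, and $n^1=n^2$ a single atom at a time $s^*<t$. Then $X^i_s=\tilde w^i_s+\sin(\tilde w^i_{s^*})\mathbf{1}_{\{s\ge s^*\}}$. Let $\tilde w^1\equiv0$ and let $\tilde w^2$ be a continuous bump with $\tilde w^2_{s^*}=\pi/2$ and $\tilde w^2_0=\tilde w^2_t=0$ (adding the same bump to a Brownian path works equally well). Then $X^1_t=0$ and $X^2_t=1$, while the right-hand side of the claimed inequality vanishes. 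The paper reaches the terminal-value form only because its Gronwall step uses one common set of transformed coefficients for both $i$: constants $U,V$ and a state-independent $\tilde f$. In other words, it discards exactly the source terms you identified and keeps only $\sup_x|\phi^1(t,x)-\phi^2(t,x)|$ at the final time. What your argument can prove is the supremum-in-time bound above, which is $y$-dependent when $b$ depends on $y$. You should state it that way, together with the hypotheses it uses:
$\sigma$ Lipschitz; $f$ bounded and Lipschitz in the state; $K_{t,z}$ locally bounded; and two bounded $x$-derivatives of $\sigma_1$. The last is needed because ``bounded and Lipschitz'' alone does not give your claimed bound on $\partial_x\psi^1-\partial_x\psi^2$.
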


As a consequence of this theorem, $\Lambda_t$ and $X_t$ defined in equations \eqref{eq:gis-mult-1d-corr} and \eqref{eq:signal-obs-1d-corr}, respectively, are continuous in the sample paths of $Y_.$. Thus, if $f$ is locally Lipschitz and bounded, then we have the following estimate for two sample paths $y_.$ and $y'_.$ of $Y_.$ and the corresponding processes $X_t,X_t'$ and $\Lambda_t,\Lambda_t'$:
\begin{align*}
    f(X_t)\Lambda_t-f(X_t')\Lambda_t' &= f(X_t)\Lambda_t-f(X_t)\Lambda_t'+f(X_t)\Lambda_t'-f(X_t')\Lambda_t' \\
    &= f(X_t)(\Lambda_t-\Lambda'_t)+(f(X_t)-f(X_t'))\Lambda'_t \\
    &\leq|f|_\infty |\Lambda_t-\Lambda'_t|+|X_t-X_t'|\Lambda'_t,
\end{align*}
and we have,
\begin{equation}
    f(X_t)\Lambda_t-f(X_t')\Lambda_t'\rightarrow0 \hspace{10pt} \text{as} \hspace{10pt} |y_t-y'_t|\rightarrow0 \hspace{10pt}\forall \hspace{10pt}t\in[0,T]\hspace{20pt}.
\end{equation}
This proves the continuity of the filter along the sample paths of the observation, since from the Kallianpur-Striebel formula, we have
\begin{equation}
    \mathbb{E}[f(X_t)|\mathcal{Y}_t] = \frac{\mathbb{\tilde{E}}[f(X_t)\Lambda_t|\mathcal{Y}_t]}{\mathbb{\tilde{E}}[\Lambda_t|\mathcal{Y}_t]}.
\end{equation}
Thus, proving continuity of the numerator suffices (since the denominator is a special case with $f(x) = 1$).

\section{Robust Filtering with Finite Jumps}\label{sec:Finite}
 Coming to our primary contributions, we consider the signal-observation process $(X_t,Y_t)$ on $\mathbb{R}^{d_X}\times\mathbb{R}^{d_Y}$:
\begin{align}
dX_t &= b_1(t,X_t,Y_t)dt+\sigma_0(t,X_t,Y_t)\circ dB_t + {\sigma}_1(t,X_t,Y_t)\circ dW_t \nonumber \\
\label{eq:corr-signal-dims} &+ \int_{|x|<1}f_1(t,X_{t-},Y_t,x)\tilde{N}_p(dt,dx) + \int_{|x|<1}f_3(t,X_t,Y_t,x)\tilde{N}_\lambda(dt,dx),\\
dY_t &= {b}_2(t,X_t,Y_t)dt + {\sigma}_2(t,Y_t)\circ dW_t + \int_{|x|<1}f_2(t,Y_t,x)\tilde{N}_\lambda(dt,dx). \label{eq:corr-obs-dims}
\end{align}
In this section, the rough-path enhancement is required only for the continuous semimartingale component (the Brownian/observation-driven part), for which the geometric lift is naturally defined via the Stratonovich integral. The jump terms in (42)--(43) are treated in the classical It\^o--L\'evy sense via integrals against the (compensated) Poisson random measures and, under the standing assumption of finitely many jumps on compact time intervals, do not require a cadlag geometric rough-path lift of the jump driver. We assume that the SDEs have unique strong solutions (see \citep{qiaocorr}) and that $B_.,W_.,N_p,N_\lambda$ are mutually independent. The notation parallels that in the previous section. Since we now consider common jumps and multi-dimensional systems, we can no longer use the stochastic flow approach from the previous section, and instead we will use rough flows to establish the robustness result in considerable generality.

As before, we perform a Girsanov transform to change $N_\lambda$ to a unit intensity point process and eliminate the drift term in the observation process. To do this define
\begin{multline}\label{eq:rough-gris}
    \Lambda_t^{-1} = \exp\left\{-\int_0^th(s,X_s,Y_s)\circ dW_s+\frac{1}{2}\int_0^t|h(s,X_s,Y_s)|^2ds\right. \\
    \left.- \int_0^t\int_{|x|<1}\log\lambda(s,X_{s-},x)N_\lambda(dt,dx) - \int_0^t\int_{|x|<1}(\lambda(s,X_{s-},x)-1)ds\nu_2(dx)\right\},
\end{multline}
where $h(t,X_t,Y_t) = \sigma^{-1}_2(t,Y_t)b_2(t,X_t,Y_t)$.

Now define a new measure $\mathbb{\tilde{P}}$ via
 \(   \frac{d\mathbb{\tilde{P}}}{d\mathbb{P}} = \Lambda_t^{-1}.\)
Under $\mathbb{\tilde{P}}$, we have
\begin{equation}
    \tilde{W}_t = W_t + \int_0^th(s,X_s,Y_s)ds
\end{equation}
is a Brownian motion;
\begin{equation}
    \tilde{N}(dt,dx) = N_\lambda(dt,dx)-dt\nu_2(dx)
\end{equation}
is a compensated Poisson random measure, and the signal-observation processes become
\begin{align}
dX_t &= b_3(t,X_t,Y_t)dt+\sigma_0(t,X_t,Y_t)\circ dB_t + \sigma_1(t,X_t,Y_t)\circ d\tilde{W}_t \nonumber \\
\label{eq:1} &\qquad+ \int_{|x|<1}f_1(t,X_{t-},Y_t,x)\tilde{N}_p(dt,dx) + \int_{|x|<1}f_3(t,X_t,Y_t,x)\tilde{N}(dt,dx),\\
dY_t &= \sigma_2(t,Y_t)\circ d\tilde{W}_t + \int_{|x|<1}f_2(t,Y_t,x)\tilde{N}(dt,dx). \label{eq:2}
\end{align}
where
\begin{equation*}
    b_3(t,X_t,Y_t) = b_1(t,X_t,Y_t)-\sigma_1(t,X_t,Y_t)h(t,X_t,Y_t).
\end{equation*}
As before, our goal is to establish the existence of a robust version of the filtering function,
\begin{equation}\label{eq:kal-str-rough}
    \pi_t(f) := \mathbb{E}[f(X_t,Y_t)|\mathcal{Y}_t] = \frac{\tilde{\mathbb{E}}[f(X_t,Y_t)\Lambda_t|\mathcal{Y}_t]}{\tilde{\mathbb{E}}[\Lambda_t|\mathcal{Y}_t]}.
\end{equation}
To that end, consider the functional
\begin{equation}\label{eq:pt-rough}
    p_t(f) := \tilde{\mathbb{E}}[f(X_t,Y_t)\Lambda_t|\mathcal{Y}_t].
\end{equation}
Notice that both the numerator and the denominator of equation \eqref{eq:kal-str-rough} are of the form of equation \eqref{eq:pt-rough} (the denominator being the special case where $f(x) = 1$). Thus, if we show the continuity of a version of $p_t(f)$ with respect to the sample paths of the observation process $Y_\cdot$, we have our robust filter.
\begin{remark}
    Note that we do not need an assumption (needed in \citep{allan2025rough} and Section \ref{sec:infinite}) that the functions $f_2$ and $f_3$ can be split up as the products:
    \begin{align*}
        f_2(t,y,u) &= m_2(t,u)n_2(t,y),\\
        f_3(t,x,y,u) &= m_3(t,u)n_2(t,x,y).
    \end{align*}
    In the finite jump case, the jump discontinuities of the paths are of finite variation. Thus, we can treat the driving jump measures (integrated over all jump sizes) as paths of finite variation. This allows us to get away from stochastic integrals to get continuity bounds.
\end{remark}
\subsection{Continuous Rough Flows}
We require the following results on continuity of rough flows and rough SDEs (RSDEs):

\begin{theorem}\label{thm:rough-flow-cont}
    Consider the rough flow
    \begin{equation}\label{eq:rough-flow}
        \phi^{\rough{\eta}}(t,x) = x + \int_0^tc(\phi^{\rough{\eta}}(s,x))d\rough{\eta}_s,
    \end{equation}
    where $c\in \text{Lip}^{5+\gamma}$ for $\gamma>0$. We also assume that $\rough{\eta}_{p:[0,T]}<\infty$ is a geometric rough path . Then the rough flow $\phi^{\rough{\eta}}$ is continuous with respect to the driving rough path $\rough{\eta}$ and initial condition, when the space of rough paths is endowed with the rough $\alpha$-H\"older metric, where $\alpha = 1/p$.
\end{theorem}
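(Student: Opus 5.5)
The plan is to reduce the statement to the standard stability theory of rough differential equations driven by geometric rough paths, namely the universal limit theorem and the local Lipschitz estimates of Lyons (in the form of \cite[Theorem 8.5 and Section 11]{friz2014course} or \cite[Theorems 10.26, 11.12]{friz2010multidimensional}). The observation is that, for a fixed initial point $x$, the map $t\mapsto\phi^{\rough{\eta}}(t,x)$ is exactly the solution of the RDE $dy=c(y)\,d\rough{\eta}$, $y_0=x$. Since $c\in\text{Lip}^{5+\gamma}$ is in particular $\text{Lip}^{\gamma'}$ for some $\gamma'>p$, that solution exists, is unique, and the Itô--Lyons map is locally Lipschitz in the pair $(x,\rough{\eta})$.

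\textbf{Step 1 (well-posedness and flow property).} Using the $\text{Lip}^{5+\gamma}$ regularity, I invoke the RDE existence and uniqueness theorem to define $\phi^{\rough{\eta}}(t,x)$ for every $x\in\mathbb{R}^e$. The extra derivatives beyond $\text{Lip}^{p+}$ give the flow property and the $C^k$-diffeomorphism property of $x\mapsto\phi^{\rough{\eta}}(t,x)$, via \cite[Section 11]{friz2010multidimensional}: one differentiates the RDE in $x$ and solves the resulting linear RDEs for the derivative flows. This is not strictly needed for continuity, but it supports the later use of the inverse flow.

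\textbf{Step 2 (Lipschitz estimate).} Take two geometric $\alpha$-Hölder rough paths $\rough{\eta},\tilde{\rough{\eta}}$ with $\|\rough{\eta}\|_\alpha,\|\tilde{\rough{\eta}}\|_\alpha\le R$, and initial points $x,\tilde x$. I would establish
\begin{equation*}
\sup_{t\le T}|\phi^{\rough{\eta}}(t,x)-\phi^{\tilde{\rough{\eta}}}(t,\tilde x)|\le C_R\bigl(|x-\tilde x|+\rho_\alpha(\rough{\eta},\tilde{\rough{\eta}})\bigr),
\end{equation*}
where $C_R$ is uniform in $x,\tilde x$. The route is the controlled-path (Gubinelli) formulation. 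Write $Y=\phi^{\rough{\eta}}(\cdot,x)$ with Gubinelli derivative $Y'=c(Y)$, and similarly $\tilde Y$. The difference of the rough integrals is controlled by the stability estimate for rough integration of composed controlled paths, which is \cite[Theorem 8.5]{friz2014course}. One closes the estimate on a short interval whose length depends only on $R$ and on $\|c\|_{\text{Lip}^3}$, and then concatenates over $[0,T]$ by a finite number of steps, the number again depending only on $R$. Uniformity in $x$ holds because all constants depend on $c$ only through its global $\text{Lip}$ norm, and not on where the path is located.

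\textbf{Step 3 (joint continuity and the main obstacle).} Continuity in $(t,x,\rough{\eta})$ follows by combining Step 2 with the $\alpha$-Hölder continuity in $t$ of each solution. The delicate part is the uniform-in-$x$ bound for the derivative flows, which is needed if continuity is wanted in a $C^k$ sense in $x$. For that I would apply the same estimate to the augmented system $(y,\partial_x y,\dots)$. Its vector fields are only locally Lipschitz, because they are linear in the derivative variables, so a priori bounds must be obtained first. I would get them from the linear-RDE growth estimate $\exp(C\|\rough{\eta}\|_\alpha^{p})$, and then localize to a ball where the fields are effectively bounded. This is where the higher regularity $5+\gamma$ is consumed.
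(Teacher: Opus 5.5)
Your proposal is correct and follows the paper's approach: the paper proves this only by citing the standard local Lipschitz continuity of the It\^o--Lyons map for geometric rough paths (Geng's notes, Theorem 2.12(ii)), and your Steps 1--2 sketch that result, while Step 3 goes beyond what the statement asks. One small caveat: the level-two controlled-path (Gubinelli) argument in Step 2 only covers $\alpha>1/3$, i.e.\ $p<3$, which is the regime the paper actually uses; for larger $p$ you would need the Lyons/Friz--Victoir estimates you also cite.
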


\begin{proof}
    This is an elementary result in rough paths. See Theorem 2.12 part $(ii)$ in \citep{geng2021introduction}.
\end{proof}

As in the previous section, define the inverse flow (of equation \eqref{eq:rough-flow}) as (see Theorem \ref{thm:inv-flow-proof})
\begin{equation}\label{eq:inv-rough-flow}
    \psi^{\rough{\eta}}(t,x) := (\phi^{\rough{\eta}})^{-1}(t,x) := x-\int_0^t\partial_x\psi^{\rough{\eta}}(s,x)c(x)d{\rough{\eta}}_s.
\end{equation}

We now establish the existence of RSDEs with jumps. Here, we use geometric rough paths with finite jump activity and use rough flows to establish existence results.

\begin{theorem}
    Let $b\in C^1_b, \sigma_1\in C^1_b$, Consider $f_1,f_2:\mathcal{U}\times \mathbb{R}^{d_X}\rightarrow \mathbb{R}^{d\times d_N}$, where $\mathcal{U}$ is a subset of $\mathbb{R}^{d_N}$, that satisfy the following Lipschitz condition for each $x$:
    \begin{equation}
        |f_i(x,y_1)-f_i(x,y_2)|\leq C_{f_i}\alpha_i(x)|y_1-y_2|,\hspace{25pt} i=1,2.
    \end{equation}
    Consider a Poisson point process $P_t$ adapted to the probability space $(\Omega,\mathcal{F}, (\mathcal{F}_t)_{t\in[0,T]}, \mathbb{P})$ which takes values in $\mathcal{U}$, and $N_1$ its counting measure with compensator $\nu$. Define the compensated Poisson random measure $\tilde{N}_1(dt,dx):=N_1(dt,dx)-\nu_1(dx)dt$. Let $M_t$ be a standard Brownian motion on the same probability space. On an auxiliary probability space $(\bar{\Omega},\tilde{\mathcal{F}},\{\mathcal{\tilde{F}}_t\}_{0\leq t\leq T}, \mathbb{\tilde{P}})$, consider a Poisson random measure $N_2(dt,dx)$ with compensator $\nu_2(dx)dt$ such that $\int_{U_1}f(x)N_2([0,t],dx)$ is a path of finite variation for all $f\in C_b$, where $U_1\in\mathcal{U}$ is a set bounded from below such that $\int_{U_1}|\alpha_i(x)|^2\nu_i(dx)<\infty, i=1,2$. Further, let $\rough{\eta}\in\mathcal{C}^{0,\alpha}$ be a geometric rough path for $\alpha\in [1/2,1/3)$. Then the doubly stochastic RSDE:
    \begin{multline}\label{eq:double-stoch-RSDE}
        dX_t(\cdot,\bar{\omega}) := b(X_t(\cdot,\bar{\omega}))dt + \sigma_1(X_t(\cdot,\bar{\omega}))\circ dM_t + \sigma_2(X_t(\cdot,\bar{\omega}))d\rough{\eta}_t \\
        +\int_{U_1}f_1(x,X_t(\cdot,\bar{\omega}))\tilde{N}_1(dt,dx) + \int_{U_1}f_2(x,X_t(\cdot,\bar{\omega}))\tilde{N}_2(dt,dx)(\bar{\omega)},
    \end{multline}
    admits a solution.
\end{theorem}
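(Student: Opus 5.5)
The plan is to remove the rough driver by a flow transformation, as in Section~\ref{sec:Flows}, and so reduce \eqref{eq:double-stoch-RSDE} to a classical jump SDE with random coefficients. Fix $\bar\omega$. Since $\int_{U_1} f\,N_2([0,t],dx)$ has finite variation and $U_1$ is bounded away from zero, $N_2(\cdot,\bar\omega)$ has finitely many atoms $(\tau_k,x_k)_{k=1}^{K}$ on $[0,T]$. The compensator part $-\int_{U_1}f_2(x,X_t)\nu_2(dx)\,dt$ is an ordinary drift, which is Lipschitz in $X$ because $\int_{U_1}\alpha_2\,d\nu_2\le (\nu_2(U_1)\int_{U_1}\alpha_2^2\,d\nu_2)^{1/2}<\infty$. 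Between consecutive jump times, the equation is therefore a continuous RSDE driven by $dt$, $\circ dM_t$ and $d\rough{\eta}_t$, together with the compensated $N_1$ integral. At each $\tau_k$ we impose the jump $X_{\tau_k}=X_{\tau_k-}+f_2(x_k,X_{\tau_k-})$. The solution is then built by concatenation: solve on $[\tau_{k},\tau_{k+1})$ starting from the post-jump value, and repeat $K$ times.

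On each inter-jump interval I would use the rough flow $\phi^{\rough\eta}$ of \eqref{eq:rough-flow} with $c=\sigma_2$. For this, $\sigma_2$ must be strengthened to $\mathrm{Lip}^{5+\gamma}$, as Theorem~\ref{thm:rough-flow-cont} requires. I would also use its inverse $\psi^{\rough\eta}$ from \eqref{eq:inv-rough-flow}, restarted at $\tau_k$. Set $\tilde X_t=\psi^{\rough\eta}(t,X_t)$ and apply the It\^o formula for Stratonovich/jump semimartingales composed with a time-dependent $C^2$ random field, exactly as in the computation leading to \eqref{eq:uncorr-signal}. The $d\rough\eta$ term cancels, leaving
\begin{multline*}
d\tilde X_t=\partial_x\psi(t,\phi(t,\tilde X_t))\,\bar b(\phi(t,\tilde X_t))\,dt+\partial_x\psi\,\sigma_1(\phi)\circ dM_t\\
+\int_{U_1}\big[\psi(t,\phi+f_1(x,\phi))-\psi(t,\phi)\big]\tilde N_1(dt,dx)+(\text{compensator correction})\,dt ,
\end{multline*}
where $\bar b$ denotes $b$ with the $\nu_2$-drift included. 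The coefficients are deterministic, continuous functions of $t$ through $\rough\eta$. They are $C^1_b$ in space because $\phi$ and $\psi$ are $C^2$ diffeomorphisms whose derivatives are bounded uniformly on $[0,T]$, with bounds depending only on $\|\rough\eta\|_{\alpha}$ (again by the flow estimates in \citep{geng2021introduction}). The jump coefficient inherits the Lipschitz bound $C\alpha_1(x)|y_1-y_2|$, and $\int_{U_1}\alpha_1^2\,d\nu_1<\infty$. Classical existence and uniqueness for L\'evy-driven SDEs with Lipschitz coefficients and square-integrable Lipschitz jump kernels (e.g.\ \citep{applebaum2009levy}) then gives an adapted c\`adl\`ag solution $\tilde X$. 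Defining $X_t=\phi^{\rough\eta}(t,\tilde X_t)$ gives a solution on the interval, and the jump map at $\tau_k$ provides the next initial condition.

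The main obstacle is justifying the It\^o--Wentzell-type chain rule for $\psi^{\rough\eta}(t,X_t)$. Here $\psi$ is rough in $t$ but deterministic, while $X$ is a semimartingale plus a rough-driven term. I would handle this by Wong--Zakai approximation: replace $\rough\eta$ by the lifts of smooth paths $\eta^n\to\rough\eta$ in the rough $\alpha$-H\"older metric. For $\eta^n$ the formula is the classical one, and the transformed equation for $\tilde X^n$ has no $\eta^n$ term at all. Its coefficients converge locally uniformly by Theorem~\ref{thm:rough-flow-cont}, so $\tilde X^n\to\tilde X$ in probability by stability of jump SDEs. We then \emph{define} the solution of \eqref{eq:double-stoch-RSDE} as $X=\phi^{\rough\eta}(\cdot,\tilde X)$, which is consistent with the smooth case and with the geometric (Stratonovich/Marcus) interpretation. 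Finally, measurability in $\bar\omega$ follows because the jump data $(\tau_k,x_k)$ are measurable and each step of the construction depends measurably on them.
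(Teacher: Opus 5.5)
Your proposal is correct and follows the same route as the paper, whose proof is the flow decomposition of \cite[Theorem 2]{crisanrough}. That decomposition composes the inverse rough flow of $\sigma_2$ (taken to be $C^5$, which you also note is needed) with the solution via the It\^o formula for jump processes, solves the resulting classical L\'evy-driven SDE with Lipschitz coefficients, and maps back through $\phi^{\rough{\eta}}$. The differences are only in the details: you treat the finitely many atoms of $N_2(\cdot,\bar\omega)$ by interlacing and restarting the flow at each $\tau_k$, whereas the paper lets those jumps pass through the It\^o formula as a transformed jump coefficient (equivalent by the flow property), and you make explicit, via Wong--Zakai approximation and the definition $X=\phi^{\rough{\eta}}(\cdot,\tilde X)$, the chain-rule step that the paper simply asserts.
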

\begin{proof}
    The proof is similar to the proof of \cite[Theorem 2]{crisanrough}, with the only difference being in the use of It\^o's lemma. In our case, we have continuous rough flows, but we are composing them with discontinuous processes (It\^o's lemma for discontinuous process of this form is given in \cite[Theorem 4.4.7]{applebaum2009levy}). However, this does not pose a problem since $\sigma_2\in C^5$, the flow and inverse flow are both twice differentiable and bounded and thus, Lipschitz. This guarantees the existence of a solution $\mathbb{\bar{P}}$ a.e. $\bar{\omega}$. 
\end{proof}

\begin{theorem}\label{thm:rough-poisson-cont}
    Let $b\in C^1_b, \sigma_{1}\in C^1_b,\sigma_2\in C^5_b$. Consider $f_1,f_2:\mathcal{U}\times \mathbb{R}^{d_X}\rightarrow \mathbb{R}^{d\times d_N}$, where $\mathcal{U}$ is a subset of $\mathbb{R}^{d_N}$, that satisfy the following Lipschitz condition for each $x$:
    \begin{equation}
        |f_i(x,y_1)-f_i(x,y_2)|\leq C_{f_i}\alpha_i(x)|y_1-y_2|,\hspace{25pt} i=1,2.
    \end{equation}
    Consider a Poisson point process $P_t$ adapted to the probability space $(\Omega,\mathcal{F}, (\mathcal{F}_t)_{t\in[0,T]}, \mathbb{P})$ which takes values in $\mathcal{U}$, and $N_1$ its counting measure with compensator $\nu$. Define the compensated Poisson random measure $\tilde{N}_1(dt,dx):=N_1(dt,dx)-\nu_1(dx)dt$. Let $M_t$ be a standard Brownian motion on the same probability space. \\
    Let $Q^1_t,Q^2_t$ be Poisson processes adapted to an auxiliary filtered probability space $(\bar{\Omega},\bar{\mathcal{F}},(\mathcal{\bar{F}}_t)_{0\leq t\leq T},\bar{\mathbb{P}})$ (we will later take this to be a copy of our original space) taking values in $\mathcal{U}$, and $N_q^1,N_q^2$ their counting measure with compensators $\nu_1$ and $\nu_2$ respectively. Let $X_t, \tilde{X}_t$ be continuous semi-martingales on $(\bar{\Omega},\bar{\mathcal{F}},(\mathcal{\bar{F}}_t)_{0\leq t\leq T},\bar{\mathbb{P}})$, and ${\rough{X}}(\bar{\omega}) := (X(\bar{\omega}),\mathbb{X}(\bar{\omega}))$ be the Stratonovich lift of $X_t$, i.e., $\mathbb{X}_{s,t} := \int_s^t(X_r-X_s)\otimes\circ dX_r$, and similarly ${\rough{\tilde{X}}}(\bar{\omega}) := (\tilde{X}(\bar{\omega}),\mathbb{\tilde{X}}(\bar{\omega}))$ the Stratonovich lift of $\tilde{X}_t$.\\
    Assume that $U_1\in\mathcal{U}$ bounded from below such that $\int_{U_1}|\alpha_i(x)|^2\nu_i(dx)<\infty$ for $i=1,2$.
   Then, the RSDEs
    \begin{multline}
        dY^{{\rough{X}}}_t(\cdot,\bar{\omega}) := b(Y^{{\rough{X}}}_t(\cdot,\bar{\omega}))dt+\sigma_1(Y^{{\rough{X}}}_t(\cdot,\bar{\omega}))\circ dM_t + \sigma_2(Y^{\rough{X}}_t(\cdot,\bar{\omega}))d{\rough{X}}_t(\bar{\omega})\\ + \int_{U_1}f_1(x,Y^{\rough{X}}_t(\cdot,\bar{\omega}))\tilde{N}_1(dt,dx) + \int_{U_1}f_2(x,Y^{\rough{X}}_t(\cdot,\bar{\omega}))\tilde{N}^1_q(dt,dx)(\bar{\omega}),
    \end{multline}
    \begin{multline}
        dY^{\rough{\tilde{X}}}_t(\cdot,\bar{\omega}) := b(Y^{\rough{\tilde{X}}}_t(\cdot,\bar{\omega}))dt+\sigma_1(Y^{\rough{\tilde{X}}}_t(\cdot,\bar{\omega}))\circ dM_t + \sigma_2(Y^{\rough{\tilde{X}}}_t(\cdot,\bar{\omega}))d{\rough{\tilde{X}}}_t(\bar{\omega})\\ + \int_{U_1}f_1(x,Y^{\rough{\tilde{X}}}_t(\cdot,\bar{\omega}))\tilde{N}_1(dt,dx) + \int_{U_1}f_2(x,Y^{\rough{\tilde{X}}}_t(\cdot,\bar{\omega}))\tilde{N}^2_q(dt,dx)(\bar{\omega}),
    \end{multline}
    defined on the product space $(\hat{\Omega},\hat{\mathcal{F}},\hat{\mathbb{P}}) := (\Omega\times\bar{\Omega},\mathcal{F}\otimes\bar{\mathcal{{F}}},\tilde{\mathbb{P}}\otimes\bar{\mathbb{P}})$, satisfy the following inequality for all $t\in[0,T]$ for $\bar{\mathbb{P}}$ almost every $\bar{\omega} \in \bar{\Omega}$
    \begin{align*}
\left\|Y^{\rough{X}}_t(\cdot,\bar{\omega})-Y^{\rough{\tilde{X}}}_t(\cdot,\bar{\omega})\right\|_{L^1(\Omega)}\leq &C(\bar{\omega})\left(\rho_{\alpha-\text{H\"ol}}\left({\rough{X}}(\bar{\omega}),{\rough{\tilde{X}}}(\bar{\omega}) \right) \right.\\&\qquad + \left.\left\|\int_{U_1}(\tilde{N}_q^1(ds,dx)(\bar{\omega})-\tilde{N}^2_q(ds,dx)(\bar{\omega}))\right\|_{1-\text{var}}\right),
    \end{align*}
    where $C(\bar{\omega})$ is $\bar{\mathbb{P}}$-almost-everywhere a finite constant.
\end{theorem}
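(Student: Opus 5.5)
Fix $\bar\omega$ outside a $\bar{\mathbb{P}}$-null set so that $\rough{X}(\bar\omega)$ and $\rough{\tilde X}(\bar\omega)$ are geometric $\alpha$-Hölder rough paths, and the counting paths $t\mapsto N_q^i([0,t]\times U_1)(\bar\omega)$ have finitely many jumps on $[0,T]$. Then $\int_{U_1}\tilde N_q^i(ds,dx)(\bar\omega)$ are deterministic paths of finite variation. The plan is to remove the rough driver by a flow transformation, as in \cite[Theorem 2]{crisanrough} and the scalar argument of Section~\ref{sec:Flows}.

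\textbf{Flow transformation.} Let $\phi^{\rough{X}}$ be the rough flow \eqref{eq:rough-flow} with $c=\sigma_2$, and let $\psi^{\rough{X}}$ be its inverse \eqref{eq:inv-rough-flow}; define $\phi^{\rough{\tilde X}},\psi^{\rough{\tilde X}}$ in the same way. Set $Z_t:=\psi^{\rough{X}}(t,Y^{\rough{X}}_t)$. Applying Itô's formula for jump processes \cite[Theorem 4.4.7]{applebaum2009levy}, combined with the rough chain rule for the flow, the $d\rough{X}$ terms cancel exactly as in \eqref{signal-inverse-flow}. What remains is a classical jump SDE
\begin{align*}
dZ_t &= \tilde b^{\rough X}(t,Z_t)\,dt+\tilde\sigma_1^{\rough X}(t,Z_t)\circ dM_t+\int_{U_1}\tilde f_1^{\rough X}(t,Z_{t-},x)\,\tilde N_1(dt,dx)\\
&\quad+\int_{U_1}\tilde f_2^{\rough X}(t,Z_{t-},x)\,\tilde N_q^1(dt,dx)(\bar\omega),
\end{align*}
with transformed coefficients $\tilde b^{\rough X}=\partial_x\psi^{\rough X}\,b\circ\phi^{\rough X}$ plus compensator corrections, and $\tilde f_i^{\rough X}(t,z,x)=\psi^{\rough X}(t,\phi^{\rough X}(t,z)+f_i(x,\phi^{\rough X}(t,z)))-z$. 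Because $\sigma_2\in C^5_b$, the maps $\phi,\psi,\partial_x\phi,\partial_x\psi,\partial_x^2\psi$ are bounded and Lipschitz uniformly in $t$, with constants depending on $\|\rough X\|_\alpha$. Two consequences follow. First, the transformed coefficients inherit the Lipschitz bounds $C\alpha_i(x)|z_1-z_2|$. Second, by Theorem~\ref{thm:rough-flow-cont}, applied locally Lipschitz in the rough metric together with its derivative flow, each transformed coefficient differs between $\rough X$ and $\rough{\tilde X}$ by at most $C\rho_{\alpha\text{-Höl}}(\rough X,\rough{\tilde X})$, uniformly in $(t,z)$. For the jump coefficient this difference is bounded by $C\alpha_i(x)\rho_\alpha$.

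\textbf{Stability estimate.} Let $\tilde Z_t$ be the analogous process for $\rough{\tilde X}$, driven by $\tilde N_q^2$, and estimate $\mathbb{E}|Z_t-\tilde Z_t|$. I would split each coefficient difference into a \emph{same-coefficient, different-state} part, controlled by Lipschitz constants, and a \emph{different-coefficient} part, controlled by $\rho_\alpha$. The $M$-integral is handled by Burkholder–Davis–Gundy together with $L^2$ and Jensen. The $\tilde N_1$ term is handled by the isometry and $\int\alpha_1^2d\nu_1<\infty$. The $\tilde N_q$ terms are pathwise Stieltjes integrals against finite-variation paths, so I write
\[
\int \tilde f_2^{\rough X}(Z)\,d\tilde N_q^1-\int\tilde f_2^{\rough{\tilde X}}(\tilde Z)\,d\tilde N_q^2=\int(\cdots)\,d\tilde N^1_q+\int\tilde f_2^{\rough{\tilde X}}(\tilde Z)\,d(\tilde N_q^1-\tilde N_q^2).
\]
The last term is bounded by $\|\tilde f_2\|_\infty$ times the $1$-variation norm appearing in the statement. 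The first term is a finite sum over jumps plus a $\nu$-integral, which is Lipschitz in $Z-\tilde Z$ with weight $\alpha_2$. The resulting Gronwall inequality is taken with respect to the finite measure $dt+|N_q^1|(dt)+\nu$-part. This gives $\mathbb{E}\sup_{s\le t}|Z_s-\tilde Z_s|\le C(\bar\omega)(\rho_\alpha+\|\cdot\|_{1\text{-var}})$, where $C(\bar\omega)$ depends on the number of jumps of $N_q^1(\bar\omega)$ and on $\|\rough X\|_\alpha,\|\rough{\tilde X}\|_\alpha$.

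\textbf{Transforming back.} Finally, write $Y^{\rough X}_t-Y^{\rough{\tilde X}}_t=\phi^{\rough X}(t,Z_t)-\phi^{\rough{\tilde X}}(t,\tilde Z_t)$. This is bounded by $\mathrm{Lip}(\phi)\,|Z_t-\tilde Z_t|+\sup_{z}|\phi^{\rough X}(t,z)-\phi^{\rough{\tilde X}}(t,z)|$, and the second term is $\lesssim\rho_\alpha$ by Theorem~\ref{thm:rough-flow-cont}.

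The main obstacle is the first step. It requires a quantitative (local Lipschitz) version of Theorem~\ref{thm:rough-flow-cont} for $\phi$, $\psi$ and their first two spatial derivatives, uniform in $z$, since the theorem as stated is only qualitative. I would obtain this from the standard Lipschitz estimate for RDE flows with $\mathrm{Lip}^{5+\gamma}$ fields \cite{geng2021introduction}, applied to the RDE system satisfied by $(\phi,D\phi,D^2\phi)$. I would also verify carefully that the jump-induced terms $\psi(\phi(z)+f)-z$ keep the $\alpha_i(x)$ weight.
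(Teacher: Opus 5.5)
Your proposal follows essentially the same route as the paper's proof. You conjugate by the inverse rough flow $\psi$ to remove the $d\rough{X}$ term, then run an $L^2$ Gronwall argument against a finite-variation control built from $dt$, $\nu$ and the pathwise Poisson measure, splitting off a source term in $\tilde N^1_q-\tilde N^2_q$ bounded by $\|\tilde f_2\|_\infty$ times the $1$-variation. You then pass to $L^1$ by Jensen and map back with the Lipschitz flow $\phi$.

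The one substantive difference is in your favour. The paper writes identical transformed coefficients $\tilde b,\tilde\sigma_1,\tilde f_1,\tilde f_2$ for both conjugated equations, so its bound on $\|Y^1_t-Y^2_t\|_{L^1(\Omega)}$ has no $\rho_{\alpha-\text{H\"ol}}$ term at all. Your different-coefficient terms fix this: they are controlled by a quantitative Lipschitz estimate for $\phi,\psi$ and their spatial derivatives, uniform in $z$, which is exactly what that step needs.
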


\subsection{Continuity of the Filter}
Now, define processes $(X^{\rough{\eta},n},Y^{\rough{\eta},n},I^{\rough{\eta},n})$ as RSDEs on $(\bar{\Omega},\bar{\mathcal{F}},\bar{\mathbb{P}})$, where $\rough{\eta}$ is a rough path of finite p-variation with $p\in(2,3)$, and $\tilde{n}(dt,dx) = n(dt,dx)-dt\nu_2(dx)$ is some measure such that $\int_{|x|<1}f(x)n([0,t],dx)$ is a jump path with finite variation for all $f\in C_b$ and $t\in[0,T]$:
\begin{align}
    \begin{split}\label{eq:sig-obs-driv-rough}
        dX_t^{\rough{\eta},n} &:=  b_3(t,X_t^{\rough{\eta},n},Y_t^{\rough{\eta},n})dt+\sigma_0(t,X_t^{\rough{\eta},n},Y_t^{\rough{\eta},n})\circ dB_t + \sigma_1(t,X_t^{\rough{\eta},n},Y_t^{\rough{\eta},n})d\rough{\eta}_t\\&+ \int_{|x|<1}f_1(t,X_{t-}^{\rough{\eta},n},Y_t^{\rough{\eta},n},x)\tilde{N}_p(dt,dx) + \int_{|x|<1}f_3(t,X_t^{\rough{\eta},n},Y_t^{\rough{\eta},n},x)\tilde{n}(dt,dx),\\
        dY_t^{\rough{\eta},n} &:= \sigma_2(t,Y_t^{\rough{\eta},n})d\rough{\eta}_t + \int_{|x|<1}f_2(t,Y_t^{\rough{\eta},n},x)\tilde{n}(dt,dx),
    \end{split}
    \\[1ex]
    \begin{split}\label{eq:gris-driv-rough}
        dI_t^{\rough{\eta},n} &:= h(t,X_t^{\rough{\eta},n},Y_t^{\rough{\eta},n})d\rough{\eta}_t-\frac{1}{2}|h(t,X_t^{\rough{\eta},n},Y_t^{\rough{\eta},n})|^2dt \\
        &+ \int_{|x|<1}\log\lambda(t,X_{t-}^{\rough{\eta},n},x)\tilde{n}(dt,dx)+ \int_{|x|<1}(\lambda(t,X_{t-}^{\rough{\eta},n},x)-1+\log\lambda(t,X_{t-}^{\rough{\eta},n},x))ds\nu_2(dx).
    \end{split}
\end{align}

Also, define
\begin{equation}
    g^f_t(\rough{\eta},n) := \mathbb{\bar{E}}[f(X_t^{\rough{\eta},n},Y_t^{\rough{\eta},n})\exp\{I_t^{\rough{\eta},n}\}], \hspace{25pt} \theta_t^f(\rough{\eta},n):=\frac{g_t^f(\rough{\eta},n)}{g_t^1(\rough{\eta},n)}.
\end{equation}

\begin{remark}
    We do not have to worry about the boundedness of the exponential moment of $I^{\rough{\eta},n}$ since we are dealing with continuous RDEs (from flow decomposition). The boundedness was proved in \cite[Theorem 3]{crisanrough}
\end{remark}

Recall that $\mathcal{C}^\alpha$ is the space of rough paths with finite $1/\alpha$-variation, and $D$ is the space of paths with finite 1-variation.

\begin{theorem}
    Suppose $f\in C^1_b$, and assume the space $\mathcal{C}^\alpha([0,T]\bigoplus D([0,t]), \alpha\in(1/3,1/2)$ is endowed with the topology induced by the semi-norms $\rho_{\alpha-\text{H\"ol}} + \|\cdot\|_{1-\text{var}}$. Then $g^f_t(\rough{\eta},n)$ is Lipschitz.
\end{theorem}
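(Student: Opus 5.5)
The proof reduces Lipschitz continuity of $g^f_t$ to a stability estimate for the triple $(X^{\rough{\eta},n},Y^{\rough{\eta},n},I^{\rough{\eta},n})$ with respect to $(\rough{\eta},n)$. That estimate comes from Theorem~\ref{thm:rough-poisson-cont}, and it is then pushed through the bounded Lipschitz map $(x,y,i)\mapsto f(x,y)e^{i}$.

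Fix two inputs $(\rough{\eta},n)$ and $(\tilde{\rough{\eta}},\tilde n)$ and write $X,Y,I$ and $\tilde X,\tilde Y,\tilde I$ for the corresponding solutions. Since $f\in C^1_b$ and $|e^{a}-e^{b}|\le (e^{a}+e^{b})|a-b|$, we have
\begin{equation*}
|g^f_t(\rough{\eta},n)-g^f_t(\tilde{\rough{\eta}},\tilde n)| \le \|f\|_{C^1}\,\bar{\mathbb{E}}\Big[(e^{I_t}+e^{\tilde I_t})\big(|X_t-\tilde X_t|+|Y_t-\tilde Y_t|+|I_t-\tilde I_t|\big)\Big].
\end{equation*}
Cauchy--Schwarz splits this into $\|e^{I_t}+e^{\tilde I_t}\|_{L^2}$ times the $L^2$ distance of the solution triples. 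The first factor is bounded locally uniformly in the rough and counting data; this is the exponential-moment bound cited in the Remark above from \cite[Theorem 3]{crisanrough}, together with the fact that $n$ has finitely many jumps and $\lambda$ is bounded above and below. The resulting constant therefore depends only on a bound for $\rho_{\alpha-\text{H\"ol}}$ and $\|\cdot\|_{1-\text{var}}$ of the inputs, which is what local Lipschitz continuity requires.

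It remains to bound the solution difference by $\rho_{\alpha-\text{H\"ol}}(\rough{\eta},\tilde{\rough{\eta}})+\|n-\tilde n\|_{1-\text{var}}$. The pair $(X,Y)$, with $I$ appended as an extra component, is an RSDE of exactly the form in Theorem~\ref{thm:rough-poisson-cont}. The drift is $(b_3,0,-\tfrac12|h|^2)$, the Brownian part is driven by $B$ with coefficient $\sigma_0$, the rough part is driven by $\rough{\eta}$ with coefficient $(\sigma_1,\sigma_2,h)$, the $\tilde N_p$ integral has integrand $f_1$, and the $\tilde n$ integral has integrand $(f_3,f_2,\log\lambda)$. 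The $\nu_2\,ds$ compensator term of $I$ goes into the drift. I would apply that theorem to the augmented system, with the deterministic counting paths $n,\tilde n$ playing the role of $N^1_q(\bar\omega),N^2_q(\bar\omega)$. This gives the $L^1$ bound; an $L^2$ version follows by the same Gronwall argument, or alternatively one uses boundedness of the integrands together with H\"older instead of Cauchy--Schwarz.

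Because the jump driver has finitely many atoms, the integrals against $\tilde n-\tilde{\tilde n}$ are Lebesgue--Stieltjes integrals. They are controlled by the sup of the integrand times $\|n-\tilde n\|_{1-\text{var}}$, and the compensator parts coincide. This is why no separability of $f_2,f_3$ is needed.

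The main obstacle is that $I$ carries the rough integral $\int h\,d\rough{\eta}$, so it is not a flow-decomposable Stratonovich term of the same smoothness class. The flow argument (composing with the inverse flow $\psi^{\rough{\eta}}$ from \eqref{eq:inv-rough-flow}) requires $h$ to sit in the $\text{Lip}^{5+\gamma}$ class needed in Theorem~\ref{thm:rough-flow-cont}. I would handle this as in \cite{crisanrough}: impose that regularity on $(\sigma_1,\sigma_2,h)$, absorb the flow into the coefficients, and use Theorem~\ref{thm:rough-flow-cont} to pass continuity of the flow in $\rough{\eta}$ to the transformed coefficients. The transformed coefficients must be shown to depend Lipschitz-continuously on $\rough{\eta}$ uniformly on bounded sets; then Gronwall, applied to the transformed finite-variation-plus-Brownian SDE with random jump times, closes the estimate.
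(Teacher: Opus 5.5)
Your proposal follows the same route as the paper's proof. You split the difference using the Lipschitz property of $f$ and $|e^a-e^b|\le (e^a\vee e^b)|a-b|$, control the exponential weight by the exponential-moment bound of \cite[Theorem 3]{crisanrough}, and control the solution differences by Theorem~\ref{thm:rough-poisson-cont}. Your extra care fills in steps the paper passes over: Cauchy--Schwarz pairs the exponential weight with an $L^2$ rather than $L^1$ stability estimate, appending $I$ to the system makes $|I_t-\tilde I_t|$ also covered, you track the $\rough{\eta}$-dependence of the flow-transformed coefficients, and you record that the constant is only locally uniform. One caveat, shared with the paper, is that $\|n-\tilde n\|_{1\text{-var}}$ must be read as the total variation of the signed measure $n-\tilde n$ on $[0,t]\times U_1$, not the $1$-variation of the mark-integrated path $r\mapsto (n-\tilde n)([0,r]\times U_1)$; otherwise the Stieltjes bound fails whenever $f_2$, $f_3$ or $\lambda$ depend on the mark.
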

\begin{proof}
    Consider two rough paths $\rough{\eta^1},\rough{\eta^2}\in\mathcal{C}^\alpha([0,T])$ and two measures $n^1,n^2$ such that for any $V\subseteq U_1$, $n_i(V)\in D([0,T]),i=1,2$. To shorten the notation, let the triple $(X,Y,I)$ driven by the pair $(\rough{\eta^i},n^i)$ be denoted $(X^i,Y^i,I^i)$. Consider the difference
    \begin{align*}
        |g^f_t(\rough{\eta^1},n^1)-g^f_t(\rough{\eta^2},n^2)| &= |\mathbb{\bar{E}}[f(X_t^1,Y_t^1)\exp\{I_t^1\}] - \mathbb{\bar{E}}[f(X_t^2,Y_t^2)\exp\{I_t^2\}]|\\
        &= |\mathbb{\bar{E}}[(f(X_t^1,Y_t^1)-f(X_t^2,Y_t^2))\exp\{I_t^1\}]\\&\qquad\qquad\qquad - \mathbb{\bar{E}}[f(X_t^2,Y_t^2) \left(\exp\{I_t^1\}-\exp\{I_t^2\}\right)]| \\
        &\leq C_f|\mathbb{\bar{E}}[(|X^1_t-X^2_t|+|Y_t^1-Y^2_t|)\exp\{I^1_t\}]|\\&\qquad\qquad\qquad + |f|_\infty\mathbb{\bar{E}}[(\exp\{I^1_t\}\vee\exp\{I^2_t\})|I^1_t-I^2_t|].
    \end{align*}
    For the last inequality we used the Lipschitz property of $f$ and the inequality $|\exp(x)-\exp(y)|\leq |x-y|(\exp(x)\vee\exp(y))$. We now use the boundedness of exponential moments of rough SDEs (proven in \cite[Theorem 3]{crisanrough}), along with Theorem \ref{thm:rough-poisson-cont}, we get
    \begin{equation*}
        |g^f_t(\rough{\eta^1},n^1)-g^f_t(\rough{\eta^2},n^2)|\leq C\left(\rho_{\alpha-\text{H\"ol}}(\rough{\eta^1},\rough{\eta^2}) + \left\|\int_{U_1}(n^1([0,\cdot],dx)-n^2([0,\cdot],dx))\right\|_{1-\text{var}:[0,t]}\right).
    \end{equation*}
\end{proof}

\subsection{Version of the Filter}

We now want to show that $\theta^f_t(\rough{\tilde{W}}(\bar{\omega}),N_\lambda(\bar{\omega}))$ is a version of the filter $\pi_t(f)(\bar{\omega})$ for $\mathbb{\bar{P}}$-almost-every $\bar{\omega}$. To do this, we need to ``lift" our signal-observation process to a product space in the following way.\\
Define an auxiliary probability space $(\bar{\Omega},\bar{\mathcal{F}},\bar{\mathbb{P}})$ on which the processes $B_t$ and $N_p$ lie. The processes $\tilde{W}_t$ and $N_\lambda$ lie in the space $(\Omega,\mathcal{F},\mathbb{\tilde{P}})$. The signal-observation processes (\eqref{eq:1},\eqref{eq:2}) as well as the Grisanov multiplier (\eqref{eq:rough-gris}) lie on the product space $(\hat{\Omega},\hat{\mathcal{F}},\hat{\mathbb{P}}) := (\Omega\times\bar{\Omega},\mathcal{F}\otimes\bar{\mathcal{{F}}},\tilde{\mathbb{P}}\otimes\bar{\mathbb{P}})$. Note that $(X,Y,I)$ have the same distribution on $(\hat{\Omega},\hat{\mathcal{F}})$ under $\hat{\mathbb{P}}$ as it did on $(\Omega,\mathcal{F})$ under $\mathbb{\tilde{P}}$.

\begin{theorem}\label{thm:consistency}
    Let $\rough{\tilde{W}}_t(\bar{\omega}) = (\tilde{W}_t(\bar{\omega}), \mathbb{\tilde{W}}_t(\bar{\omega}))$ be the Stratonovich rough lift of $\tilde{W}_t(\bar{\omega})$. Then, we have
    \begin{equation}
        g^f_t(\rough{\tilde{W}},N_\lambda) = \rho_t(f),\hspace{25pt} \theta_t^f(\rough{\tilde{W}},N_\lambda) = \pi_t(f)
    \end{equation}
    $\mathbb{\tilde{P}}$ almost surely.
\end{theorem}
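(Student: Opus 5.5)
The proof follows the strategy of \cite[Theorem 4]{crisanrough}: identify the RSDE solutions, evaluated at the random driver $(\rough{\tilde{W}}(\omega),N_\lambda(\omega))$, with the original SDE solutions on the product space. Then use Fubini to recognise the $\bar{\omega}$-integral as the conditional expectation given $\mathcal{Y}_t$.

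\textbf{Step 1: identification of solutions.} We show that, $\hat{\mathbb{P}}$-almost surely,
\[
(X^{\rough{\eta},n},Y^{\rough{\eta},n},I^{\rough{\eta},n})\big|_{(\rough{\eta},n)=(\rough{\tilde{W}}(\omega),N_\lambda(\omega))}(\bar{\omega})=(X,Y,\log\Lambda)(\omega,\bar{\omega}).
\]
For the continuous rough part we use the flow decomposition. Let $\phi^{\rough{\eta}}$ be the rough flow of Theorem~\ref{thm:rough-flow-cont} driven by $\sigma_1$ (and $\sigma_2$, $h$ for the $Y$ and $I$ components), and $\psi^{\rough{\eta}}$ its inverse \eqref{eq:inv-rough-flow}. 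Composing $\psi^{\rough{\eta}}$ with the RSDE solution gives a classical SDE in $\bar{\omega}$. Its coefficients depend continuously on $\rough{\eta}$, and it is driven by $B$, $\tilde{N}_p$ and the finite-variation path $n$. The chain rule is Itô's formula for jump processes \cite[Theorem 4.4.7]{applebaum2009levy} combined with the rough chain rule.

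Now take a sequence of piecewise-linear (or mollified) approximations $\tilde{W}^m$. Their canonical lifts converge to the Stratonovich lift $\rough{\tilde{W}}$ in $\rho_{\alpha\text{-H\"ol}}$ almost surely, by the Wong--Zakai theorem (see the remark after the Stratonovich lift definition). For smooth drivers the RSDE is an ordinary SDE with random drift, so its solution evaluated at $\tilde{W}^m(\omega)$ coincides with the solution of the SDE driven by $\tilde{W}^m$ on the product space. As $m\to\infty$, the rough side converges in $L^1(\bar{\Omega})$ by Theorem~\ref{thm:rough-poisson-cont} (with $n$ fixed). The SDE side converges in probability to the Stratonovich solution of \eqref{eq:1}--\eqref{eq:2} by the Wong--Zakai theorem \cite[Theorem 6.5]{kurtz1995stratonovich}. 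Along a subsequence both limits agree almost surely.

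The jump component needs no approximation. Because $N_\lambda$ has finitely many atoms on $[0,T]$, the integral against $\tilde{n}$ with $n=N_\lambda(\omega)$ is a pathwise Lebesgue--Stieltjes integral. It therefore coincides with the stochastic integral against $\tilde{N}$, using independence of $N_\lambda$ from $(B,N_p)$ under $\hat{\mathbb{P}}$. The same argument applies to $I$, whose $d\tilde{W}$ integral in \eqref{eq:rough-gris} is Stratonovich.

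\textbf{Step 2: measurability.} We need the map $\omega\mapsto g^f_t(\rough{\tilde{W}}(\omega),N_\lambda(\omega))$ to be $\mathcal{Y}_t$-measurable. The preceding theorem shows $g^f_t$ is Lipschitz, hence Borel, in $(\rough{\eta},n)$ under $\rho_{\alpha\text{-H\"ol}}+\|\cdot\|_{1\text{-var}}$. Since $\sigma_2$ is invertible (Assumption~\ref{as:invert sigma_2}), $\tilde{W}$ is recovered from $Y$ via $d\tilde{W}=\sigma_2^{-1}(dY-\text{jumps})$. By Assumption~\ref{as:jumps}, $N_\lambda$ is read off the jumps of $Y$. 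The Stratonovich lift is a measurable functional of $\tilde{W}$ restricted to $[0,t]$. Hence $(\rough{\tilde{W}}|_{[0,t]},N_\lambda|_{[0,t]})$ is $\mathcal{Y}_t$-measurable, and so is $g^f_t$ evaluated at it, since $g^f_t$ only depends on the driver up to time $t$.

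\textbf{Step 3: conditional expectation.} Let $A\in\mathcal{Y}_t$. Since $A$ depends only on $\omega$, Fubini and Step 1 give
\[
\tilde{\mathbb{E}}[\mathbf{1}_A f(X_t,Y_t)\Lambda_t]=\int_A\bar{\mathbb{E}}\big[f(X_t^{\rough{\eta},n},Y_t^{\rough{\eta},n})e^{I_t^{\rough{\eta},n}}\big]\big|_{(\rough{\tilde{W}}(\omega),N_\lambda(\omega))}\,\tilde{\mathbb{P}}(d\omega)=\tilde{\mathbb{E}}[\mathbf{1}_A g^f_t(\rough{\tilde{W}},N_\lambda)].
\]
Integrability on the left comes from the uniform exponential moment bound \cite[Theorem 3]{crisanrough}. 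This yields $g^f_t(\rough{\tilde{W}},N_\lambda)=p_t(f)=\rho_t(f)$ almost surely. Taking $f=1$, and noting $g^1_t>0$, the Kallianpur--Striebel formula \eqref{eq:kal-str-rough} gives the claim for $\theta^f_t$.

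\textbf{Main obstacle.} The hardest part is Step 1, specifically exchanging the Wong--Zakai limit with evaluation of the rough solution at a random driver. This requires a jointly measurable version of $(\rough{\eta},n,\bar{\omega})\mapsto X^{\rough{\eta},n}_t(\bar{\omega})$ and a product-space Wong--Zakai statement for the mixed Stratonovich/jump system. I would first try to get the measurable version from the local Lipschitz estimate of Theorem~\ref{thm:rough-poisson-cont}. I would then treat the jumps by conditioning on the finitely many jump times of $N_\lambda$ and applying the continuous Wong--Zakai result between consecutive jumps.
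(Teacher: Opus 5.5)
Your skeleton matches the paper's: identify the RSDE triple evaluated at $(\rough{\tilde W}(\omega),N_\lambda(\omega))$ with $(X,Y,I)$ on the product space, then use Fubini to recognise the $\bar{\mathbb{E}}$-average as a conditional expectation. Step~1 takes a different route, and Step~2 contains a genuine gap.

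\textbf{Step 1 (identification).} The paper (Theorem~\ref{thm:rde=sde}) compares only \emph{flows}. The rough flow of the $\tilde W$-vector fields, driven by the Stratonovich lift, coincides a.s.\ with the Stratonovich flow. Composing the two inverse flows with the two solutions gives equations with no $\tilde W$-integral and identical random coefficients, and pathwise uniqueness closes the argument. You set up this same decomposition at the start of Step~1 but do not use it for the identification. Instead you run a Wong--Zakai limit for the whole mixed system ($B$, $\tilde N_p$, $\tilde n$ and a smoothed $\tilde W$). That is exactly what creates the obstacle you flag: non-adapted interpolants, a product-space Wong--Zakai theorem with jumps, and evaluation at a random driver. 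Once the flow decomposition is in place, Wong--Zakai is needed only for the continuous flow driven by $\tilde W$ alone, where it is classical; the jump and $B$ parts never need approximating. Your route can probably be completed (e.g.\ jump-time by jump-time), but it buys nothing over the paper's and costs a non-standard limit theorem.

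\textbf{Step 2 (measurability) is where the gap lies.} Assumption~\ref{as:jumps} only says that $N_p$ and $N_\lambda$ never jump together; it does not let you read $N_\lambda$ off $Y$. At an atom $(s,u)$ of $N_\lambda$ the observation sees only $\Delta Y_s=f_2(s,Y_{s-},u)$. Recovering the marks, and even the jump times if $f_2$ can vanish, requires one of two things:
\begin{itemize}
\item $u\mapsto f_2(s,y,u)$ is injective and non-vanishing; or
\item $N_\lambda$ is declared part of the observation.
\end{itemize}
Without such a hypothesis the statement fails. Take $f_2\equiv0$ but $f_3\not\equiv0$. Then under $\tilde{\mathbb{P}}$ the observation $Y$ is driven by $\tilde W$ alone and carries no information about $N_\lambda$. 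Your own Fubini computation then shows that $g^f_t(\rough{\tilde W},N_\lambda)$ is the conditional expectation given the strictly larger $\sigma$-field $\mathcal{G}_t:=\sigma(\tilde W_s,N_\lambda|_{[0,s]}:s\le t)$. That quantity still depends nontrivially on $N_\lambda$, so it cannot equal $\rho_t(f)$. You must add this observability hypothesis explicitly. The paper does not supply it either: its claim that $\bar{\mathbb{E}}[f(X_t,Y_t)e^{I_t}]$ is $\mathcal{F}^Y_t$-measurable does not follow from the Fubini identity. You were right to isolate this step, but the justification has to change.

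Relatedly, ``Lipschitz hence Borel'' is not the right tool. The $1$-variation space of jump paths is non-separable: single-jump paths with different jump times sit at a fixed positive distance from each other. As a result, $\omega\mapsto N_\lambda(\omega)$ is not Borel measurable into that space. It is cleaner to argue as follows:
\begin{enumerate}
\item $(X_t,Y_t,I_t)$ is $\mathcal{G}_t\otimes\bar{\mathcal{F}}_t$-measurable on the product space.
\item Fubini--Tonelli then gives $\mathcal{G}_t$-measurability of the $\bar{\mathbb{E}}$-average directly.
\item What remains is the inclusion $\mathcal{G}_t\subseteq\mathcal{Y}_t$ up to null sets, which is exactly where the injectivity of $f_2$ enters.
\end{enumerate}
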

\begin{proof}
    First notice that for all $A\in \mathcal{F}^Y_t$, we have
    \begin{equation*}
        \mathbb{\tilde{E}}[f(X_t,Y_t)\exp\{I_t\}\mathbf{1}_A]=\mathbb{\hat{E}}[f(X_t,Y_t)\exp\{I_t\}\mathbf{1}_A] = \mathbb{\tilde{E}}[\mathbb{\bar{E}}[f(X_t,Y_t)\exp\{I_t\}]\mathbf{1}_A]
    \end{equation*}
    Thus, the random variable $\mathbb{\bar{E}}[f(X_t,Y_t)\exp\{I_t\}]$ is $\mathcal{F}^Y_t$ measurable and is a version of the conditional expectation $\mathbb{\tilde{E}}[f(X_t,Y_t)\exp\{I_t\}|\mathcal{F}^Y_t]$.
    It is a classical result in rough path theory (Theorem \ref{thm:rde=sde}) that $(X_t^{\rough{\tilde{W}},N_\lambda}, Y_t^{\rough{\tilde{W}},N_\lambda},I_t^{\rough{\tilde{W}},N_\lambda}) = (X_t,Y_t,I_t)$, $\mathbb{\tilde{P}}$ almost surely. Thus, we have $\mathbb{\tilde{P}}$ almost surely
    \begin{equation*}
        \mathbb{\bar{E}}[f(X_t,Y_t)\exp\{I_t\}] = \mathbb{\bar{E}}[f(X_t^{\rough{\tilde{W}},N_\lambda},Y_t^{\rough{\tilde{W}},N_\lambda})\exp\{I_t^{\rough{\tilde{W}},N_\lambda}\}].
    \end{equation*}
\end{proof}

\section{Robust Filtering with Infinite Jumps}\label{sec:infinite}
Generalized shot noise models can also have countably infinite number of jumps in a finite time interval. Consider a shot noise model of the form
    \[
    d\xi_t = \int_{|x|< 1}f(t,x)\tilde{N}(dt,dx),
    \]
    where $\mathbb{E}[N(dt,dx)] = \nu(dx)dt$ and $\int_{|x|<1}|x|\nu(dx)$ is not finite. We also assume that the function $f$ is bounded, and the measure $\nu$ satisfies the following property
    \begin{align}\label{eq:p var compensator}
        \int_{|x|<1}|x|^p\nu(dx) &<\infty, p\in[2,3).
    \end{align}
    This ensures that the shot noise $\xi_t$ has finite $p$-variation for $p\in[2,3)$. Nonlinear stochastic filtering problems with an infinite intensity shot noise cannot be treated with the frame work that we have developed thus far. As in Section \ref{sec:Finite}, one may separate the jump component into "large" and "small" jumps. For any fixed cutoff $\varepsilon>0$, the contribution of jumps with $|z|>\varepsilon$ is finite on compact intervals and can be treated as discrete events exactly as in the finite-activity framework. The essential new feature in the present setting is the infinite activity of the small-jump component $|z|\le \varepsilon$, whose cumulative effect requires a cadlag geometric enhancement. We therefore employ the canonical/Marcus construction (equivalently, a log-linear fill-in of jumps) to obtain a geometric rough-path lift that is stable under $\varepsilon\downarrow 0$.

    Before going into the model and deriving robustness of the filtering function, we observe that the solution map for the canonical rough differential equation (RDE) in~\eqref{eq:canonical-RDE} is continuous.

    \begin{theorem}\label{thm:lip flow of canonical RDE}
        The solution map for~\eqref{eq:canonical-RDE} is continuous (and locally Lipschitz) as a map
        \begin{equation}
\left(\mathbb{R}^e,|\cdot|\right)\oplus\left(\mathcal{D}^{p-\textnormal{var}}(G^N(\mathbb{R}^d),\beta_p)\right)\mapsto \left(D^{p-\textnormal{var}}(\mathbb{R}^e),\|\|_{p-\textnormal{var}}\right),
        \end{equation}
        and the associated flow is a diffeomorphism $\Phi\in \textnormal{\text{Diff}}^m(\mathbb{R}^e)$.
    \end{theorem}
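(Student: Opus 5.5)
The plan is to reduce everything to the continuous RDE theory via the fill-in construction, along the lines of Friz--Shekhar/Chevyrev. Fix $y_0,\tilde y_0\in\mathbb{R}^e$ and admissible pairs $(\mathbf{x},\phi),(\tilde{\mathbf{x}},\tilde\phi)\in\mathcal{D}^{p-\text{var}}(G^N(\mathbb{R}^d))$, with $\phi$ the log-linear path function~\eqref{eq:loglin}.

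For each $\delta>0$, form the continuous rough paths $\mathbf{x}^{\phi,\delta}$ and $\tilde{\mathbf{x}}^{\tilde\phi,\delta}$ as in~\eqref{eq:cont-from-discont}, using the series $\sum_k\delta r_k$. Let $\bar y^\delta,\tilde{\bar y}^\delta$ be the solutions of the corresponding continuous RDEs. The first step is to note that $y=\bar y^\delta\circ\tau^\delta_{\mathbf{x}}$ is independent of $\delta$. This holds because log-linear interpolation of a jump $\Delta\mathbf{x}_t$ produces, on the fictitious interval, the time-one flow of the vector field $\sum_i(\log\Delta\mathbf{x}_t)^iV_i$. That flow does not depend on the length $\delta r_k$ of the interval, by reparametrisation invariance of RDEs.

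Second, I will apply the local Lipschitz estimate for continuous RDEs (Friz--Victoir, Theorem~10.26, valid for $V\in\text{Lip}^{\gamma}$ with $\gamma>p$). This gives
\begin{equation*}
\|\bar y^\delta-\tilde{\bar y}^\delta\|_{p-\text{var};[0,T]}\le C\bigl(|y_0-\tilde y_0|+\rho_p(\mathbf{x}^{\phi,\delta},\tilde{\mathbf{x}}^{\tilde\phi,\delta})\bigr),
\end{equation*}
where $C$ depends only on upper bounds for $\|\mathbf{x}^{\phi,\delta}\|_{p-\text{var}}$ and $\|\tilde{\mathbf{x}}^{\tilde\phi,\delta}\|_{p-\text{var}}$. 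These bounds are uniform in $\delta$: the $p$-variation of the filled-in path is controlled by $\|\mathbf{x}\|_{p-\text{var}}$ plus the sum of $d_{CC}(\mathbf{x}_{t-},\mathbf{x}_t)^p$ over the jumps, and neither depends on $\delta$.

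Third, I need to pass from $\bar y^\delta$ back to $y$. Here is the main obstacle: $\tau^\delta_{\mathbf{x}}\neq\tau^\delta_{\tilde{\mathbf{x}}}$ in general, so $y-\tilde y$ is not simply $(\bar y^\delta-\tilde{\bar y}^\delta)\circ\tau$.

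My first attempt uses the limit $\delta\to0$ in $\beta_p$. As $\delta\to0$, both $\tau^\delta_{\mathbf{x}}$ and $\tau^\delta_{\tilde{\mathbf{x}}}$ converge uniformly to the identity. The fictitious intervals collapse, and the restriction of $\bar y^\delta$ to real times converges to $y$ in the sense needed. One then has to check two things:
\begin{itemize}
\item The $p$-variation of $y-\tilde y$ is bounded by $\liminf_\delta$ of the $p$-variation of $\bar y^\delta-\tilde{\bar y}^\delta$, computed over partitions of real times. This should follow because the $p$-variation of a composition with a nondecreasing map is at most the original $p$-variation, applied after aligning the two time changes.
\item Misaligned jump times contribute only terms that vanish with $\delta$, or else are already charged in $\beta_p$.
\end{itemize}
If direct alignment fails, I would fall back on the $\sigma_p$/$\alpha_p$ Skorokhod-type argument of Chevyrev--Friz and then use that $\beta_p$ is finer than $\alpha_p$.

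Letting $\delta\to0$ in the estimate of the second step yields local Lipschitz continuity of the solution map with respect to $|\cdot|\oplus\beta_p$.

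Finally, for the diffeomorphism claim, I will argue as follows. For fixed $\delta$ the continuous RDE flow $y_0\mapsto\bar y^\delta_t$ lies in $\text{Diff}^m(\mathbb{R}^e)$ for $V\in\text{Lip}^{\gamma+m-1}$ (Friz--Victoir, Chapter~11). The canonical flow at time $t$ equals the continuous flow at time $\tau^\delta_{\mathbf{x}}(t)$. It is therefore a $C^m$-diffeomorphism, being a composition of continuous RDE flows with time-one ODE flows of smooth vector fields.
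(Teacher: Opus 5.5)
Your route is the paper's: fill in the jumps and apply the continuous-RDE stability estimate to $\mathbf{x}^{\phi,\delta},\tilde{\mathbf{x}}^{\tilde\phi,\delta}$ with a $\delta$-independent constant. Then return to real time, let $\delta\to0$, and read off the diffeomorphism from the continuous flow at time $\tau^\delta_{\mathbf{x}}(t)$. Your first, second and last steps are fine.

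\textbf{The gap.} The gap is the step you flag yourself, and neither of your suggestions closes it. The paper gets past it by asserting that the $p$-variation of $y^1-y^2$ is at most that of the difference of the continuous solutions, because composing with $\tau_{\mathbf{x}}$ only cuts out fictitious intervals. That argument silently uses a single time change for both solutions. As you correctly observe, no such common time change exists when the jump times (or the size ranking that assigns the $r_k$) differ, so ``monotone composition after aligning the time changes'' has nothing to align. Your fallback does not prove the statement either. The Chevyrev--Friz result gives continuity into the Skorokhod-type metric ($\sigma_p$, resp.\ $\alpha_p$) on the \emph{output}. Knowing that $\beta_p$ dominates $\alpha_p$ only strengthens the hypothesis; it never upgrades the conclusion to a bound on $\|y-\tilde y\|_{p\text{-var}}$, which is what is claimed.

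\textbf{The missing idea: compare by evaluation, not composition.}
\begin{itemize}
\item Let $s\in(0,T)$ be a continuity point of both $\mathbf{x}$ and $\tilde{\mathbf{x}}$. Then $y_s=\bar y^\delta(\tau^\delta_{\mathbf{x}}(s))$ exactly and $\tilde y_s=\tilde{\bar y}^\delta(\tau^\delta_{\tilde{\mathbf{x}}}(s))$. The two fictitious times differ by $O(\delta)$, so the fictitious window between them lies inside the image, fill-ins included, of a real window $[s-C\delta,s+C\delta]$ under $\tau^\delta_{\tilde{\mathbf{x}}}$.
\item By the local RDE a priori bound and reparametrisation invariance, the oscillation of $\tilde{\bar y}^\delta$ on that window is controlled by the $p$-variation of the filled-in $\tilde{\mathbf{x}}$ over $[s-C\delta,s+C\delta]$. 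This is $\lesssim\|\tilde{\mathbf{x}}\|_{p\text{-var};[s-C\delta,s+C\delta]}$, which tends to $0$ because $s$ is a continuity point of a cadlag path of finite $p$-variation. Hence $\tilde{\bar y}^\delta(\tau^\delta_{\mathbf{x}}(s))\to\tilde y_s$.
\item This fails at jump times of $\tilde{\mathbf{x}}$, where $\tau^\delta_{\mathbf{x}}(s)$ can land in the middle of a fill-in. That is why only common continuity points are used.
\item Take a partition whose interior points lie in this co-countable set (both time changes fix $0$ and $T$). Then $\sum_i|(y-\tilde y)_{s_i,s_{i+1}}|^p$ is the limit of partition sums of $\bar y^\delta-\tilde{\bar y}^\delta$ at the increasing points $\tau^\delta_{\mathbf{x}}(s_i)$. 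Hence it is at most $\liminf_{\delta\to0}\|\bar y^\delta-\tilde{\bar y}^\delta\|^p_{p\text{-var}}$.
\item By right-continuity, shifting interior points slightly to the right shows that such partitions already compute $\|y-\tilde y\|_{p\text{-var}}$.
\end{itemize}
Combined with your $\delta$-uniform estimate from step two, this yields the $\beta_p$-Lipschitz bound without any bookkeeping of misaligned jumps.
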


    \begin{remark}
        Since we have shown continuity with respect to the $p$-variation metric (which is a pseudo-metric), when we have $\|y^1-y^2\|_{p-\text{var}} = 0$, we can conclude that $y^1 = y^2+c$, i.e., the paths differ by a constant. However, noticing that the RHS of the inequality
        \begin{equation*}
            \|\tilde{y}^1-\tilde{y}^2\|_{p-\text{var}} \lesssim |y^1_0-y_0^2| + \rho_{p-\text{var}}(\mathbf{x}^{\phi^1,\delta}_1,\mathbf{x}^{\phi^2,\delta}_2).
        \end{equation*}
        has a $|y^1_0-y^2_0|$ which also needs to go to zero, the constant $c$ must be zero, and we have $y^1\rightarrow y^2$ as $y^1_0\rightarrow y^2_0$ and $x_1\rightarrow x_2$ in $\beta_{p-\text{var}}$.
    \end{remark}

    We now state and prove a slightly more general result about the stability of rough SDEs (RSDEs), where the rough path is cadlag and of finite $p$-variation.
    \begin{corollary}\label{thm:RSDE cadlag stability}
        Let $p\in[2,3)$ and $(\Omega,\mathcal{F},\mathbb{P})$ be a probability space carrying a standard Brownian motion $B_t$ and a semimartingale $\xi_t$ that has cadlag sample paths of finite $p$-variation almost surely. Let $({\rough{\eta}}^1,\phi^1),({\rough{\eta}}^2,\phi^2)\in \mathcal{D}^{p-\text{var}}$ be two representatives, and consider the two RSDEs:
        \begin{align}\label{eq:canonical RSDE stability}
            dX^1_t(\omega)&:= b(X_t^1(\omega))dt+\sigma(X_t^1(\omega))\circ dB_t(\omega) + f(X_t^1(\omega))\diamond d\xi_t + g(X_t^1)\diamond d({\rough{\eta}}^1,\phi^1),\\
            dX^2_t(\omega)&:= b(X_t^2(\omega))dt+\sigma(X_t^2(\omega))\circ dB_t(\omega) + f(X_t^2(\omega))\diamond d\xi_t + g(X_t^2)\diamond d({\rough{\eta}}^2,\phi^2),
        \end{align}
        Then we have the following continuity estimate:
        \begin{equation}
            \|X^1_\cdot-X^2_\cdot\|_{p-\text{var};[0,t]}\lesssim |X^1_0-X^2_0|+\beta_p(\rough{\eta}^1,\rough{\eta}^2).
        \end{equation}
    \end{corollary}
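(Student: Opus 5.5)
The plan is to reduce Corollary~\ref{thm:RSDE cadlag stability} to Theorem~\ref{thm:lip flow of canonical RDE}, working $\omega$ by $\omega$. The idea is to treat every driver in \eqref{eq:canonical RSDE stability} as one joint cadlag geometric rough path and then apply the deterministic Lipschitz estimate for the canonical RDE.

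\textbf{Step 1: build a joint driver.} For fixed $\omega$, set $Z_t := (t, B_t(\omega), \xi_t(\omega))$. Lift $Z$ to a cadlag geometric rough path $\mathbf{Z}(\omega)$ with the Marcus lift of Definition~\ref{def:marcus-lift}. Because $B$ is continuous, this is the Stratonovich lift on the Brownian component, so the $\circ dB$ integral is a Marcus integral. The lift exists almost surely since $\xi$ has finite $p$-variation with $p\in[2,3)$; for semimartingales the existence of the Marcus lift follows from the Wong--Zakai theorem \cite[Theorem 6.5]{kurtz1995stratonovich}. Next, join $\mathbf{Z}$ with $\rough{\eta}^i$ to get a single $G^2(\mathbb{R}^{1+d_B+d_\xi+d})$-valued cadlag path $\mathbf{x}^i$. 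The cross iterated integrals between $\mathbf{Z}$ and $\rough{\eta}^i$ must be specified. I would define them through the continuous fill-ins $\mathbf{Z}^{\phi,\delta}$ and $(\rough{\eta}^i)^{\phi^i,\delta}$, placed on a common extended time scale. Jumps of $\xi$ and $\rough{\eta}^i$ are filled in sequentially, which is where Assumption~\ref{as:jumps}-type disjointness of jump times helps. One could instead use Young integrals for the cross terms involving $t$.

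\textbf{Step 2: identify the solutions.} Show that $X^i(\omega)$ solves the canonical RDE $dX^i = V(X^i)\diamond d\mathbf{x}^i$ with $V=(b,\sigma,f,g)$. Here I would use the RDE/SDE consistency result (Theorem~\ref{thm:rde=sde}, the analogue invoked in Theorem~\ref{thm:consistency}) on the time-changed continuous problem, where $\bar{X}^i=X^i\circ\tau_{\mathbf{x}^i}^{-1}$ solves a continuous RDE.

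\textbf{Step 3: apply the deterministic estimate.} Theorem~\ref{thm:lip flow of canonical RDE} gives
\[
\|X^1-X^2\|_{p-\text{var};[0,t]}\lesssim |X^1_0-X^2_0|+\beta_p(\mathbf{x}^1,\mathbf{x}^2),
\]
with a constant that depends on bounds for $\beta_p$-norms of $\mathbf{x}^i$, and hence on $\omega$. It then remains to bound $\beta_p(\mathbf{x}^1,\mathbf{x}^2)\lesssim\beta_p(\rough{\eta}^1,\rough{\eta}^2)$, using that the $\mathbf{Z}$ component is common to both. At level one this is immediate. At level two the pure-$\mathbf{Z}$ terms cancel. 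The cross terms differ by integrals of the form $\int Z\otimes d(\eta^{1}-\eta^{2})$ on the continuous fill-ins, which I would control with the rough/Young cross-integral estimate. That estimate is bilinear, so it is bounded by $\|Z\|_{p-\text{var}}\,\rho_p(\eta^{1,\phi,\delta},\eta^{2,\phi,\delta})$, and one then takes $\delta\to0$.

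\textbf{Main obstacle.} The hardest step is Step 3's cross-term control, together with ensuring that a common reparametrisation exists. Jumps of $\xi$ and $\rough{\eta}^i$ at different times are filled in on different extended time scales, and these must be aligned so that $\beta_p$ of the joint paths is dominated by $\beta_p$ of the $\rough{\eta}$ components. I would first try to insert all fictitious intervals for $\xi$ identically into both joint paths, with the same $r_k$. The remaining difference then comes only from the $\rough{\eta}^i$ fill-ins, and letting $\delta\to0$ removes the mismatch of time scales. The implied constant is random, and the estimate is understood pathwise, almost surely.
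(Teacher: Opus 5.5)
\textbf{There is a genuine gap in Step~3, and it already undermines Step~1.} The problem is the cross iterated integrals between $Z=(t,B,\xi)$ and $\rough{\eta}^i$: they admit no pathwise bilinear estimate.

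\begin{itemize}
\item Only the $t$-component can be paired with $\rough{\eta}^i$ by Young integration. $B(\omega)$ has finite $q$-variation only for $q>2$, while $\xi(\omega)$ and $\eta^i$ have finite $p$-variation with $p\ge 2$. So Young's condition $1/p+1/q>1$ fails for every pairing of $B$ or $\xi$ with $\eta^i$.
\item Consequently no inequality of the form $|\int Z\otimes d(\eta^1-\eta^2)|\lesssim\|Z\|_{q\text{-var}}\,\rho_q(\rough{\eta}^1,\rough{\eta}^2)$ exists. For example, $Z^n_t=n^{-1/2}\cos(nt)$ and $\eta^n_t=n^{-1/2}\sin(nt)$ (with its canonical lift) tend to $0$ in $q$-variation for every $q>2$, yet $\int_0^{2\pi}Z^n_t\,d\eta^n_t=\pi$ for all $n$.
\item Working on the continuous fill-ins changes nothing, since they have the same regularity.
\end{itemize}

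So your joint path $\mathbf{x}^i$ is not a function of $(\mathbf{Z}(\omega),\rough{\eta}^i)$ at all. Its cross terms must be built stochastically, e.g.\ $\int\eta^i\otimes dB$ as a Wiener integral with deterministic integrand, as in joint lifts of Brownian motion with a deterministic rough path. They are then defined only off an $\eta^i$-dependent null set, and depend on $\eta^i$ only in probability: $\int_0^t(\eta^1_s-\eta^2_s)\,dB_s$ is a centred Gaussian with variance $\int_0^t|\eta^1_s-\eta^2_s|^2\,ds$.

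Hence you can at best bound $\beta_p(\mathbf{x}^1,\mathbf{x}^2)$ in $L^q(\Omega)$, not $\omega$-wise. Even that would require integrability of the random local Lipschitz constant in Theorem~\ref{thm:lip flow of canonical RDE}, which is delicate because of the jumps of $\xi$. Step~2 has related problems. Theorem~\ref{thm:rde=sde} covers a continuous semimartingale driver with its Stratonovich lift, not such a hybrid joint lift. The corollary also gives you no disjointness between the jump times of $\xi$ and those of $\rough{\eta}^i$.

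\textbf{The paper avoids any joint lift by a flow decomposition.}
\begin{itemize}
\item Only $g$ is driven by $\rough{\eta}^i$. The paper takes the canonical flows $\Phi^i$ of $g$; their Lipschitz dependence on $(x,\rough{\eta}^i)$ in $\beta_p$ is exactly Theorem~\ref{thm:lip flow of canonical RDE}.
\item Composing with the inverse flows, $\tilde X^i_t=\Psi^i(t,X^i_t)$ solves an ordinary SDE driven only by $dt$, $\circ dB$ and $\diamond d\xi$, with $\rough{\eta}^i$-dependent coefficients, and $X^i_t=\Phi^i(t,\tilde X^i_t)$.
\item The difference $\tilde X^1-\tilde X^2$ is then handled by the BDG/Gronwall argument in expectation from the proof of Theorem~\ref{thm:rough-poisson-cont}.
\end{itemize}
No cross integrals between $(B,\xi)$ and $\rough{\eta}^i$ ever appear. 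Also, $b,\sigma,f$ need only Lipschitz-type regularity, whereas your route forces them into $\mathrm{Lip}^{\gamma+m-1}$ with $\gamma>p$. The price is that the $\tilde X$-part is controlled in $L^1(\Omega)$ rather than pathwise. That is the natural form of such an estimate, and it is also what a repaired version of your approach would deliver.
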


\subsection{The Model}
Consider the signal-observation process $(X_t,Y_t)$ on $\mathbb{R}^{d_X}\times\mathbb{R}^{d_Y}$:
\begin{align}
dX_t &= b_1(t,X_t,Y_t)dt+\sigma_0(t,X_t,Y_t)\circ dB_t + {\sigma}_1(t,X_t,Y_t)\circ dW_t \nonumber \\
\label{eq:corr-signal-inf} &\qquad\qquad\qquad+ f_1(t,X_{t-},Y_t)\diamond d\xi_t^1 +f_3(t,X_t,Y_t)\diamond d\xi^2_t,\\
dY_t &= {b}_2(t,X_t,Y_t)dt + {\sigma}_2(t,Y_t)\circ dW_t + f_2(t,Y_t)\diamond d\xi_t^2, \label{eq:corr-obs-inf}
\end{align}
where
\begin{equation*}
    d\xi^i_t = \int_{|x|<1}\eta^i(t,x)\tilde{N}^i(dt,dx),\hspace{10pt} \int_{|x|<1}\eta^i(t,x)\nu_i(dx)<\infty,\hspace{10pt}\forall t\in[0,T], i=1,2.\
\end{equation*}

We assume that the SDEs have unique strong solutions (see \citep{qiaocorr}) and that $B_.,W_.,N_p,N_\lambda$ are mutually independent. The notation used is the same as that in the previous section. The difference now is that both ${N}^1$ and ${N}^2$ have compensators that satisfy equation \eqref{eq:p var compensator}. The compensated measure of $N_\lambda(dt,dx)$ is $dt\nu_2(dx)$ (instead of $\lambda(x,X_{t-},t)\nu_2(dx)$). The reason we consider this is that if the compensator is $X$ dependent, then it is not easy to remove it since the $\Lambda_t^{-1}$ defined in \eqref{eq:rough-gris} by the new $\tilde{N}_\lambda$ is no longer a martingale and the Girsanov theorem does not hold.

We perform a change of measure as usual
\begin{equation}
    \Lambda^{-1}_t = \exp\left\{-\int_0^th(s,X_s,Y_s)\circ dW_s+\frac{1}{2}\int_0^t|h(s,X_s,Y_s)|^2ds\right\},
\end{equation}
where $h(t,X_t,Y_t) = (\sigma_2(t,Y_t))^{-1}b_2(t,X_t,Y_t)$.
Define $\tilde{\mathbb{P}}\ll \mathbb{P}$ as
\begin{equation}
    \frac{d\tilde{\mathbb{P}}}{d\mathbb{P}} = \Lambda^{-1}_t.
\end{equation}
Under $\tilde{\mathbb{P}}$, the signal-observation system \eqref{eq:corr-signal-inf},\eqref{eq:corr-obs-inf} becomes
\begin{align}
    dX_t &= b_3(t,X_t,Y_t)dt+ \sigma_0(t,X_t,Y_t)\circ dB_t+\sigma_1(t,X_t,Y_t)\circ d\tilde{W}_t \nonumber \\
    &\qquad\qquad\qquad+f_1(t,X_{t-},Y_t)\diamond d\xi_t^1 + f_3(t,X_t,Y_t)\diamond d\xi^2_t,\\
    dY_t &= {\sigma}_2(t,Y_t)\circ d\tilde{W}_t +f_2(t,Y_t)\diamond d\xi_t^2,
\end{align}
where $\tilde{W}_t = W_t - h(t,X_t,Y_t)$ is a standard Brownian motion under $\tilde{\mathbb{P}}$. Following the Kallianpur-Striebel formula, we have
\begin{equation}
    \pi_t(f) := \mathbb{E}[f(X_t,Y_t)|\mathcal{Y}_t] = \frac{\mathbb{\tilde{E}}[f(X_t,Y_t)\Lambda_t|\mathcal{Y}_t]}{\mathbb{\tilde{E}}[\Lambda_t|\mathcal{Y}_t]} = \frac{p_t(f)}{p_t(1)}.
\end{equation}

Define the triple $(X^{\rough{\eta}},Y^{\rough{\eta}},I^{\rough{\eta}})$ as RSDEs on $(\bar{\Omega},\bar{\mathcal{F}},\mathbb{\bar{P}})$, where $\rough{\eta}$ is a cadlag geometric rough path of finite $p$-variation with $p\in[2,3)$.
\begin{align}\label{eq:triple cadlag}
    dX^{\rough{\eta}}_t &:= b_3(t,X^{\rough{\eta}}_t,Y^{\rough{\eta}}_t)dt + \sigma_0(t,X^{\rough{\eta}}_t,Y^{\rough{\eta}}_t)\circ dB_t + f_1(t,X^{\rough{\eta}}_t,Y^{\rough{\eta}}_t)\diamond d\xi_t^1 + G_1(t,X^{\rough{\eta}}_t,Y^{\rough{\eta}}_t)\diamond d\rough{\eta}_t,\\ \nonumber
    dY^{\rough{\eta}}_t &:= G_2(t,Y^{\rough{\eta}}_t)\diamond d\rough{\eta}_t, \\ \nonumber
    dI^{\rough{\eta}}_t &:= \frac{1}{2}h(t,X^{\rough{\eta}}_t,Y^{\rough{\eta}}_t)dt+G_3(t,X^{\rough{\eta}}_t,Y^{\rough{\eta}}_t)\diamond d\rough{\eta_t},
\end{align}
where,
\begin{align*}
G_1(t,X^{\rough{\eta}}_t,Y^{\rough{\eta}}_t) &:= (\sigma_1(t,X^{\rough{\eta}}_t,Y^{\rough{\eta}}_t),f_3(t,X^{\rough{\eta}}_t,Y^{\rough{\eta}}_t)),\\
    G_2(t,,Y^{\rough{\eta}}_t) &:= (\sigma_2(t,Y^{\rough{\eta}}_t),f_2(t,Y^{\rough{\eta}}_t)), \\
    G_3(t,X^{\rough{\eta}}_t,Y^{\rough{\eta}}_t)&:= (h(t,X^{\rough{\eta}}_t,Y^{\rough{\eta}}_t),0).
\end{align*}
Corollary \ref{thm:RSDE cadlag stability} implies that the RSDEs are continuous in $p$-variation topology with respect to the rough path $\rough{\eta}$ (in the topology generated by $\beta_p$).

Now consider the process $L:= (\tilde{W},\xi^2)^T$. The discontinuous part of $L$ is only from the $\xi^2$ component of the vector, and thus the Marcus integral of the first component of $L$ reduces to a Stratonovich integral. Rewriting the SDEs in terms of $G_1,G_2$ and $G_3$, we get
\begin{align}\label{eq:rewritten sig-obs}
    dX_t &= b_3(t,X_t,Y_t)dt +\sigma_0(t,X_t,Y_t)\circ dB_t + f_1(t,X_t,Y_t)\diamond d\xi_t^1 + G_1(t,X_t,Y_t)\diamond dL_t,\\
    \nonumber
    dY_t &= G_2(t,Y_t)\diamond dL_t,\\
    \nonumber
    d\Lambda_t &= \exp\left(-\frac{1}{2}h(t,X_t,Y_t)dt+G_3(t,X_t,Y_t)\diamond dL_t\right).
\end{align}

Let $\mathbf L$ represent the Marcus lift of $L$. We now state the result which is a consequence of previous theorems.
\begin{theorem}\label{thm:RSDE=SDE}
    The RSDEs in equations \eqref{eq:triple cadlag} are continuous as a map from $(\mathcal{D}^{p-\text{var}},\beta_p)\rightarrow(D^{p-\text{var}}(\mathbb{R}^e),\|\cdot\|_{p-\text{var}})$ and locally Lipschitz if $(G_1,G_2,G_3)$ are $\text{Lip}^{\gamma+m-1}$ with $\gamma>p$ and $m>2$. The RSDE solutions are also a version of the SDE solutions from equations \eqref{eq:rewritten sig-obs} when $\rough{\eta} = \mathbf L$.
\end{theorem}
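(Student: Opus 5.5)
The statement has two parts, and I would treat them separately: continuity/local Lipschitzness of the solution map in $\beta_p$, and identification of the RSDE solution with the Marcus SDE solution when $\rough{\eta}=\mathbf L$.

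\emph{Continuity.} I would reduce this to Corollary~\ref{thm:RSDE cadlag stability}. First, append time as an extra state coordinate, so that the time-dependent fields $G_1,G_2,G_3$, $b_3$, $\sigma_0$ become autonomous on the enlarged state $Z=(t,X,Y,I)\in\mathbb{R}^{e}$. Then write the triple \eqref{eq:triple cadlag} as a single system
\[
dZ_t = \bar b(Z_t)\,dt + \bar\sigma(Z_t)\circ dB_t + \bar f(Z_t)\diamond d\xi^1_t + \bar G(Z_t)\diamond d\rough{\eta}_t .
\]
Here $\bar G$ is the block field $(G_1,G_2,G_3)$ acting on $(X,Y,I)$ and vanishing on the time coordinate. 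The regularity $\text{Lip}^{\gamma+m-1}$ with $\gamma>p$ transfers to $\bar G$, because stacking components and adding a trivial coordinate preserves the Lip norm. This puts us exactly in the setting of Corollary~\ref{thm:RSDE cadlag stability}, with $\xi=\xi^1$ and $g=\bar G$. Applying it to two drivers $\rough{\eta}^1,\rough{\eta}^2$ with equal initial data gives
\[
\|Z^1-Z^2\|_{p\text{-var};[0,t]}\lesssim \beta_p(\rough{\eta}^1,\rough{\eta}^2).
\]
The implicit constant is uniform on $\beta_p$-balls, which yields local Lipschitzness. The bound transfers to each of $X,Y,I$ because each is a projection of $Z$.

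The underlying mechanism of the corollary should be checked. Fix $\delta$, pass to the continuous path $\rough{\eta}^{\phi,\delta}$, and solve a continuous RSDE in which the $B$ and $\xi^1$ terms are frozen on the added fictitious intervals. The flow decomposition of Theorem~\ref{thm:lip flow of canonical RDE} then gives a diffeomorphic flow $\Phi$, which reduces the problem to an RSDE without a rough driver. Finally, compose with $\tau_{\rough{\eta}}$ and let $\delta\to0$. Composing with $\tau$ can only lower $p$-variation, so the estimate survives.

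\emph{Identification.} Take $\rough{\eta}=\mathbf L$, the Marcus lift of $L=(\tilde W,\xi^2)$. The approach is to show that the canonical RDE driven by $\mathbf L$ with the log-linear path function coincides with the Marcus SDE. I would proceed in three steps.
\begin{enumerate}
\item On intervals between jumps of $\xi^2$ (or, for a truncated $\xi^2$ keeping only jumps larger than $\varepsilon$, between its finitely many jumps), the continuous part is a Stratonovich/rough integral. There the equality holds by the classical RDE$=$SDE result (Theorem~\ref{thm:rde=sde}), used as in Theorem~\ref{thm:consistency}, since the Stratonovich lift is the relevant geometric lift.
\item At each jump time, the log-linear fill-in of $\mathbf L$ has $\log\Delta\mathbf L_t\in\mathbb{R}^d\oplus\{0\}$. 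The RDE along the fictitious interval is therefore the ODE $\dot y=\bar G(y)\Delta L_t$ run for unit time. This is exactly the Marcus jump map $\varphi(\Delta L_t\bar G, Z_{t-})$ appearing in the definition of the Marcus SDE.
\item Let $\varepsilon\to0$. The Marcus SDE solutions converge in probability by standard stability of Marcus SDEs \citep{kurtz1995stratonovich}. The lifts $\mathbf L^\varepsilon\to\mathbf L$ in $\beta_p$, which follows from the $p$-variation moment condition \eqref{eq:p var compensator}. The continuity established in the first part then passes the identity to the limit along an a.s.\ convergent subsequence.
\end{enumerate}
For $I$, the exponential of the $I$-equation reproduces $\Lambda$, since the Marcus chain rule holds.

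\textbf{Main obstacle.} I expect the hardest step to be $\beta_p$-convergence $\mathbf L^\varepsilon\to\mathbf L$ almost surely, or in probability, together with ensuring the constants in the corollary remain controlled. I would first try a BDG-type estimate on the $p$-variation of the small-jump martingale, combined with the fact that Marcus lifts of the truncations converge under the Skorokhod-type metric \citep{friz2012}. I would then upgrade to $\beta_p$ by ordering the fictitious intervals by jump size, so that the fill-ins for $\mathbf L^\varepsilon$ and $\mathbf L$ align on the large jumps.
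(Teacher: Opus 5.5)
Your continuity argument is essentially the paper's. The paper simply invokes Theorem~\ref{thm:lip flow of canonical RDE}, through the flow decomposition behind Corollary~\ref{thm:RSDE cadlag stability}; appending time and stacking $(G_1,G_2,G_3)$ just makes that reduction explicit.

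The identification step is where you take a genuinely different route. The paper does no approximation. It quotes \citep[Proposition 4.16]{chevyrev2019canonical} to get that the Marcus flow of $\gamma\diamond dL$ and the canonical rough flow of $\gamma\diamond d\mathbf L$ coincide almost surely. It then conjugates both the SDE and the RSDE by this common inverse flow and observes that the resulting equations in $dt$ and $dM$ (here $M=(B,\xi^1)$) have identical coefficients. Strong uniqueness finishes the argument.

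Your route has three stages: truncate to finitely many jumps; match the dynamics between jumps by the continuous RDE$=$SDE result and the dynamics at jumps by noting that the log-linear fill-in reduces the RDE to the ODE $\dot y=\bar G(y)\Delta L_t$ run for unit time (the Marcus jump map); then let $\varepsilon\to0$ using part one and Kurtz--Pardoux--Protter stability. This is self-contained and shows \emph{why} the log-linear path function is the right one. However, it essentially reproves the cited proposition and costs you two real ingredients:
\begin{itemize}
\item the estimate $\beta_p(\mathbf L^\varepsilon,\mathbf L)\to0$ in probability, including the second level. This needs a BDG/L\'epingle-type bound for the Marcus lift of the small-jump martingale (hence $p>2$ strictly), plus interpolation from uniform convergence using uniform $p'$-variation bounds with $2<p'<p$;
\item a Fubini/subsequence argument, because part one concerns a fixed deterministic driver while Marcus SDE stability holds only in probability on the product space.
\end{itemize}

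One caution about your plan for the $\beta_p$ estimate: the fill-ins of a common large jump do \emph{not} align as $\delta\to0$. Let $K_\varepsilon$ be the number of retained jumps. The interval of the $k$-th largest jump starts at $(t_k+\delta\sum_{j:t_j<t_k}r_j)\,T/(T+\delta r)$. So the offset between $\mathbf L^{\varepsilon,\phi,\delta}$ and $\mathbf L^{\phi,\delta}$ is $\delta$ times a quantity bounded by $2\sum_{j>K_\varepsilon}r_j$. This scales with $\delta$ exactly like the interval length $\delta r_k$, so a relative misalignment $\theta_k\lesssim\min\bigl(1,\sum_{j>K_\varepsilon}r_j/r_k\bigr)$ survives the $\delta$-limit.

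The step still goes through. For each fixed $k$, $\theta_k\to0$ as $\varepsilon\to0$, and dominated convergence with $\sum_t|\Delta L_t|^p<\infty$ drives $\sum_k\theta_k^p|\Delta L_{t_k}|^p$ to zero. It is cleanest to truncate by $|\Delta L_t|$ rather than by the mark $|x|$, so that the retained jumps are exactly the $K_\varepsilon$ largest and keep their ranks.
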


\begin{proof}
    The continuity is a consequence of Theorem \ref{thm:lip flow of canonical RDE}. We only need to prove that the RSDEs with $\rough{\eta} = \rough{L}$ is a version of the signal-observation processes. First, consider the general SDE
    \begin{equation*}
        dK_t :=\alpha(K_t)dt+\beta(K_t)\diamond dM_t + \gamma(K_t)\diamond dL_t,
    \end{equation*}
    where $M_t$ is a semi-martingale, and the RSDE,
    \begin{equation*}
        dK_t^r := \alpha(K_t^r)dt + \beta(K_t^r)\diamond dM_t + \gamma(K_t^r)\diamond d\rough{L}_t.
    \end{equation*}
    From \cite[Proposition 4.16 ]{chevyrev2019canonical}, we know that the flows of the SDE and RSDE (resp.)
    \begin{align*}
        \phi(t,x) &:= x + \int_0^t\gamma(\phi(s,x))\diamond dL_t,\\
        \phi^r(t,x) &:= x + \int_0^t\gamma(\phi^r(s,x))\diamond d\rough{L}_t,
    \end{align*}
    coincide almost surely, implying their inverses also coincide (almost surely) and are diffeomorphisms. Consider the composition of the inverse flows with the processes, i.e.,
    \begin{align*}
        d\tilde{K}_t :=d\phi^{-1}(t,K_t)&= \tilde{\alpha}(\tilde{K}_t)dt+\tilde{\beta}(\tilde{K}_t)\diamond dM_t,\\
        d\tilde{K}^r_t := d(\phi^r)^{-1}(t,K_t^r) &= \tilde{\alpha}^r(\tilde{K}^r_t)dt + \tilde{\beta}^r(\tilde{K}^r_t)\diamond dM_t.
    \end{align*}
    Since $\tilde{\alpha}^r = \tilde{\alpha}$ and $\tilde{\beta}^r = \tilde{\beta}$ almost surely, from the uniqueness of the solution to a strong SDE, we have $\tilde{K}^r_\cdot = \tilde{K}_\cdot$, and thus $K^r_\cdot = K_\cdot$, almost surely.

    The RSDEs and SDEs of interest to us are special cases, and are therefore we have $(X,Y,\Lambda) = (X^{\rough{\eta}},Y^{\rough{\eta}},\exp(I^{\rough{\eta}}))$ almost surely.
\end{proof}

In order to show that we have a version of the filter, define an auxiliary probability space $(\bar{\Omega},\bar{\mathcal{F}},\bar{\mathbb{P}})$ which is a copy of our original space and carries the processes $\xi_t^1$ and $B_t$. Define the functionals:
\begin{align*}
    g_t^f(\rough{\eta)}&:= \bar{\mathbb{E}}[f(X^{\rough{\eta}}_t,Y^{\rough{\eta}}_t)\exp(I^{\rough{\eta}})],\\
    \theta^f_t(\rough{\eta}) &:= \frac{g^f_t(\rough{\eta})}{g_t^1(\rough{\eta})}.
\end{align*}
\begin{theorem}
 We have
 \(       g_t^f(\rough{L)} = \rho_t(f)\)  and \(\theta^f_t(\rough{L}) = \pi_t(f)\),
    $\tilde{\mathbb{P}}$ almost surely.
\end{theorem}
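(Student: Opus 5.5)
The argument follows the same two-stage pattern as Theorem~\ref{thm:consistency}: a product-space (Fubini) step to produce a version of the conditional expectation, and then an identification step showing that the RSDE solutions driven by $\mathbf L$ coincide with the SDE solutions.

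\emph{Product-space step.} Realise everything on $(\hat\Omega,\hat{\mathcal F},\hat{\mathbb P}) := (\Omega\times\bar\Omega,\mathcal F\otimes\bar{\mathcal F},\tilde{\mathbb P}\otimes\bar{\mathbb P})$. The observation drivers $\tilde W$ and $\xi^2$, and hence $L$ and its Marcus lift $\mathbf L$, live on $\Omega$. The signal-only noises $B$ and $\xi^1$ live on the copy $\bar\Omega$. By independence of $B,W,N^1,N^2$ under $\tilde{\mathbb P}$, the law of $(X,Y,\Lambda)$ is the same on both spaces. Under $\tilde{\mathbb P}$, $Y$ solves $dY=G_2(t,Y)\diamond dL$ with $\sigma_2$ invertible. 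I would therefore argue that $\mathcal F^Y_t$ coincides, up to null sets, with the filtration generated by $L$ on $[0,t]$. The continuous part recovers $\tilde W$, and the jumps of $Y$ recover those of $\xi^2$. If this identification fails in general, it suffices to work with $\sigma(L_s:s\le t)\supseteq\mathcal F^Y_t$ and use the tower property at the end.

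Take $A\in\mathcal F^Y_t$. Fubini gives
\[
\tilde{\mathbb E}[f(X_t,Y_t)\Lambda_t\mathbf 1_A]=\hat{\mathbb E}[f(X_t,Y_t)\Lambda_t\mathbf 1_A]=\tilde{\mathbb E}\big[\bar{\mathbb E}[f(X_t,Y_t)\Lambda_t]\mathbf 1_A\big].
\]
Hence the partial expectation $\bar{\mathbb E}[f(X_t,Y_t)\Lambda_t]$, viewed as a function of $\omega$, is a version of $p_t(f)$, provided it is $\mathcal F^Y_t$-measurable.

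\emph{Identification step.} Theorem~\ref{thm:RSDE=SDE} shows that $(X^{\mathbf L},Y^{\mathbf L},\exp(I^{\mathbf L}))=(X,Y,\Lambda)$ holds $\hat{\mathbb P}$-a.s. By Fubini, for $\tilde{\mathbb P}$-a.e.\ $\omega$ the equality then holds $\bar{\mathbb P}$-a.s. in $\bar\omega$. Consequently
\[
\bar{\mathbb E}[f(X_t,Y_t)\Lambda_t]=g^f_t(\mathbf L(\omega))\quad \tilde{\mathbb P}\text{-a.s.}
\]
Measurability comes from the same ingredients. The map $\omega\mapsto\mathbf L(\omega)$ is $\sigma(L_s:s\le t)$-measurable into $(\mathcal D^{p\text{-var}},\beta_p)$, since the Marcus lift is a pathwise limit of piecewise-linear lifts. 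The map $g^f_t$ is continuous in $\beta_p$ by Corollary~\ref{thm:RSDE cadlag stability}, together with dominated convergence using the exponential-moment bound for $I^{\boldsymbol\eta}$ as in the remark after (\ref{eq:gris-driv-rough}). So $g^f_t(\mathbf L)$ is measurable, which gives $g^f_t(\mathbf L)=p_t(f)$. The statement writes this quantity as $\rho_t(f)$.

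\emph{Conclusion.} Apply the same identity with $f\equiv1$. Since $\Lambda_t>0$, we have $g^1_t(\mathbf L)>0$ a.s., so the ratio is defined. The Kallianpur--Striebel formula then yields
\[
\theta^f_t(\mathbf L)=g^f_t(\mathbf L)/g^1_t(\mathbf L)=\pi_t(f)\quad\tilde{\mathbb P}\text{-a.s.}
\]

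I expect the main obstacle to be the joint-measurability and a.s.\ identification step. Theorem~\ref{thm:RSDE=SDE} yields equality almost surely on the product space, but evaluating $g^f_t$ at the random rough path $\mathbf L(\omega)$ requires that $(\boldsymbol\eta,\bar\omega)\mapsto X^{\boldsymbol\eta}_t(\bar\omega)$ be jointly measurable. The RSDE is only defined $\bar{\mathbb P}$-a.s.\ for each fixed $\boldsymbol\eta$. I would handle this with the flow decomposition used in the proof of Theorem~\ref{thm:RSDE=SDE}. Writing $X^{\boldsymbol\eta}=\phi^{\boldsymbol\eta}(t,\tilde X^{\boldsymbol\eta})$, where $\phi^{\boldsymbol\eta}$ is the deterministic canonical RDE flow (continuous in $\boldsymbol\eta$ by Theorem~\ref{thm:lip flow of canonical RDE}) and $\tilde X^{\boldsymbol\eta}$ solves an ordinary SDE whose coefficients depend continuously on $\boldsymbol\eta$, one can choose a jointly measurable version by approximating $\boldsymbol\eta$ through a countable dense family.
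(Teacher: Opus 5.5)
Your two-stage structure is exactly the paper's: Fubini on $\hat\Omega=\Omega\times\bar\Omega$, then identification of $(X,Y,\Lambda)$ with $(X^{\mathbf L},Y^{\mathbf L},\exp(I^{\mathbf L}))$ via Theorem~\ref{thm:RSDE=SDE}, then Kallianpur--Striebel for the ratio. The paper's proof is only a pointer to Theorems~\ref{thm:consistency} and~\ref{thm:RSDE=SDE}.

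\textbf{The gap.} It is the step you flag yourself: the $\mathcal F^Y_t$-measurability of $\bar{\mathbb E}[f(X_t,Y_t)\Lambda_t]$. This quantity freezes the whole path of $L=(\tilde W,\xi^2)$ and averages only over $(B,\xi^1)$. After the identification step, Fubini therefore gives $g^f_t(\mathbf L)=\tilde{\mathbb E}[f(X_t,Y_t)\Lambda_t\mid\mathcal F^L_t]$ with $\mathcal F^L_t:=\sigma(L_s:s\le t)$, and nothing more. Your fallback does not repair this. The tower property only gives $p_t(f)=\tilde{\mathbb E}[g^f_t(\mathbf L)\mid\mathcal F^Y_t]$, which is not the claimed identity. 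So everything hinges on $\mathcal F^L_t\subseteq\mathcal F^Y_t$ up to null sets. Invertibility of $\sigma_2$ takes care of $\tilde W$. Recovering $\xi^2$ from the jumps of $Y$, however, needs the Marcus jump map $z\mapsto\phi(f_2(s,\cdot)z,y)$ (the time-one flow of $f_2(s,\cdot)z$ started at $y$) to be injective, and no standing assumption gives this.

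\textbf{Counterexample.} Without that injectivity the statement is false. Take $b_2\equiv0$, so $h\equiv0$, $\Lambda\equiv1$ and $\tilde{\mathbb P}=\mathbb P$. Take $\sigma_2\equiv I$, $f_2\equiv0$, $f_3\equiv1$, all other signal coefficients zero, and $X_0$ deterministic. Then $X_t=X_0+\xi^2_t$, and $\mathcal F^Y_t=\sigma(\tilde W_s:s\le t)$ is independent of $\xi^2$. For $f(x,y)=\varphi(x)$ with $\varphi\in C^1_b$ not constant on the support of $X_0+\xi^2_t$, the filter is the constant $\pi_t(f)=\mathbb E[\varphi(X_0+\xi^2_t)]$. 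On the other hand $\theta^f_t(\mathbf L)=g^f_t(\mathbf L)=\varphi(X_0+\xi^2_t)$.

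\textbf{What is actually missing.} The paper's argument has the same hole: the proof of Theorem~\ref{thm:consistency} simply asserts that $\bar{\mathbb E}[f(X_t,Y_t)\exp\{I_t\}]$ is $\mathcal F^Y_t$-measurable. What is missing is therefore a hypothesis, not a technique. One option is to impose observability of the jump driver, e.g.\ injectivity of the Marcus jump map above for all $s,y$. Then the jumps of $\xi^2$, and hence $\xi^2$ itself as a compensated sum of its jumps, are $\mathcal F^Y$-adapted. The other option is to state the result with $\mathcal F^L_t$ in place of $\mathcal F^Y_t$.

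\textbf{A smaller point.} In your joint-measurability repair, $(\mathcal D^{p\text{-var}},\beta_p)$ is not separable; already its continuous part with the $p$-variation rough-path metric is not. So there is no countable dense family to approximate through. You need to restrict to a separable subspace that carries $\mathbf L$. One way is to work in $p'$-variation for some $p'\in(p,3)$, where weak geometric $p$-rough paths lie in the separable closure of smooth lifts; this requires vector-field regularity $\gamma>p'$.
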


\begin{proof}
    The proof follows that of Theorem \ref{thm:consistency} and Theorem \ref{thm:RSDE=SDE}.
\end{proof}

\section{Conclusions and Future Work}
We have established the robustness of nonlinear stochastic filters for signal-observation models driven by L\'evy-type noise. By exploiting Stratonovich and Marcus formulations and the associated flow decompositions, we constructed versions of the filter that depend continuously on the observation path in two regimes: finite jump activity (Section~\ref{sec:Finite}) and infinite jump activity (Section~\ref{sec:infinite}). In the finite-activity case, we obtained continuity in both the rough
$p$-variation and
$p$-variation topologies on cadlag path space, without requiring the separability assumption on the jump coefficients imposed in~\citep{allan2025rough}. In the infinite-activity case, we proved continuity in a modified rough
$p$-variation topology under an analogous separability condition.

A central feature of our approach is that the Stratonovich and Marcus lifts yield geometric rough paths, which brings several structural advantages over the It\^o-based framework of~\citep{allan2025rough}. First, the pathwise nature of the Stratonovich and Marcus integrals provides convergence guarantees that do not depend on knowledge of the underlying probability law, whereas It\^o integrals converge only in probability. Second, the resulting flows are diffeomorphisms, making our framework readily extensible to signal-observation models on manifolds, a setting of considerable importance in constrained engineering systems where It\^o calculus is awkward to apply. Third, the geometric rough-path signatures satisfy the shuffle product identity and Chen's relation, opening a path toward efficient numerical computation of the filter via signature methods.

Several directions remain open. The separability condition on the jump coefficients $f_2$
 and $f_3$ is still required in the infinite-activity regime (Section~\ref{sec:infinite}), and relaxing or removing it is an important challenge. A related question is whether the Girsanov transform can be extended to handle state-dependent compensators in the infinite-activity setting, which would bring the model in Section~\ref{sec:infinite} closer to the generality of Section~\ref{sec:Finite}. On the computational side, exploiting the signature structure of the geometric lift to develop practical filtering algorithms, particularly for high-dimensional or real-time applications, is a natural next step. Finally, extending the robustness results to signal-observation models evolving on Riemannian manifolds, taking full advantage of the diffeomorphic flow structure established here, is a promising direction for future work.

\bibliographystyle{plainnat}
\bibliography{ref}

@article{friz2012,
 ISSN = {00911798},
 author = {Peter K. Friz and Atul Shekhar},
 journal = {The Annals of Probability},
 number = {4},
 pages = {2707--2765},
 publisher = {Institute of Mathematical Statistics},
 title = {General rough integration, L\'evy rough paths and a L\'evy–Khintchine-type formula},
 urldate = {2026-02-14},
 volume = {45},
 year = {2017}
}

@book{applebaum2009levy,
  title={{L{\'e}vy Processes and Stochastic Calculus}},
  author={Applebaum, David},
  year={2009},
  publisher={Cambridge University Press}
}

@article{qiaocorr,
  title={{Convergence of nonlinear filtering for multiscale systems with correlated L{\'e}vy noises}},
  author={Qiao, Huijie},
  journal={Stochastics and Dynamics},
  volume={23},
  number={02},
  pages={2350016},
  year={2023},
  publisher={World Scientific}
}

@article{clark,
  title={On a robust version of the integral representation formula of nonlinear filtering},
  author={Clark, J. M. C.  and Crisan, Dan},
  journal={Probability Theory and Related Fields},
  volume={133},
  number={1},
  pages={43--56},
  year={2005},
  publisher={Springer}
}

@article{crisanrough,
author = {D. Crisan and J. Diehl and P. K. Friz and H. Oberhauser},
 journal = {The Annals of Applied Probability},
 number = {5},
 pages = {2139--2160},
 publisher = {Institute of Mathematical Statistics},
 title = {Robust Filtering: Correlated Noise and Multidimensional Observation},
 urldate = {2026-02-14},
 volume = {23},
 year = {2013}
}

@inproceedings{kunita2006decomposition,
  title={On the decomposition of solutions of stochastic differential equations},
  author={Kunita, Hiroshi},
  booktitle={Stochastic Integrals: Proceedings of the LMS Durham Symposium, July 7--17, 1980},
  pages={213--255},
  year={2006},
  organization={Springer}
}

@article{qiaouncorr, title={Nonlinear filtering of stochastic dynamical systems with {L\'evy} noises}, volume={47}, DOI={10.1239/aap/1444308887}, number={3}, journal={Advances in Applied Probability}, author={Qiao, Huijie and Duan, Jinqiao}, year={2015}, pages={902–918}}

@article{davis1987pathwise,
  title={Pathwise nonlinear filtering for nondegenerate diffusions with noise correlation},
  author={Davis, {Mark H.A.} and Spathopoulos, {Michael P.}},
  journal={SIAM Journal on Control and Optimization},
  volume={25},
  number={2},
  pages={260--278},
  year={1987},
  publisher={SIAM}
}

@article{geng2021introduction,
  title={{An Introduction to the Theory of Rough Paths}},
  author={Geng, Xi},
  journal={Lecture Notes},
  pages={9},
  year={2021}
}

@article{allan2025rough,
  title={Rough {SDEs} and Robust Filtering for Jump-Diffusions},
  author={Allan, Andrew L and Pieper, Jost and Teichmann, Josef},
  journal={arXiv preprint arXiv:2507.05930},
  year={2025}
}

@book{friz2014course,
  title={{A Course on Rough Paths}},
  author={Friz, Peter K and Hairer, Martin},
  year={2014},
  publisher={Springer}
}

@article{chevyrev2019canonical,
  title={Canonical {RDEs} and general semimartingales as rough paths},
  author={Chevyrev, Ilya and Friz, Peter K},
  journal={The Annals of Probability},
  volume={47},
  number={1},
  pages={420--463},
  year={2019},
  publisher={JSTOR}
}

@article{kurtz1995stratonovich,
  title={Stratonovich stochastic differential equations driven by general semimartingales},
  author={Kurtz, Thomas G and Pardoux, {\'E}tienne and Protter, Philip},
  journal={Annales de l'IHP Probabilit{\'e}s et Statistiques},
  volume={31},
  number={2},
  pages={351--377},
  year={1995}
}

@article{gubinelli2010ramification,
  title={Ramification of rough paths},
  author={Gubinelli, Massimiliano},
  journal={Journal of Differential Equations},
  volume={248},
  number={4},
  pages={693--721},
  year={2010},
  publisher={Elsevier}
}

@incollection{chevyrev2016primer,
author="Chevyrev, Ilya
and Kormilitzin, Andrey",
editor="Bayer, Christian
and dos Reis, Goncalo
and Horvath, Blanka
and Oberhauser, Harald",
title="A Primer on the Signature Method in Machine Learning",
booktitle="Signature Methods in Finance: An Introduction with Computational Applications",
year="2026",
publisher="Springer Nature Switzerland",
pages="3--64"
}

@article{marcus2003modeling,
  title={Modeling and analysis of stochastic differential equations driven by point processes},
  author={Marcus, Steven I},
  journal={IEEE Transactions on Information Theory},
  volume={24},
  number={2},
  pages={164--172},
  year={2003},
  publisher={IEEE}
}

@article{marcus1981modeling,
  title={Modeling and approximation of stochastic differential equations driven by semimartingales},
  author={Marcus, Steven I},
  journal={Stochastics: An International Journal of Probability and Stochastic Processes},
  volume={4},
  number={3},
  pages={223--245},
  year={1981},
  publisher={Taylor \& Francis}
}

@book{friz2010multidimensional,
  title={{Multidimensional Stochastic Processes as Rough Paths: Theory and Applications}},
  author={Friz, Peter K and Victoir, Nicolas B},
  volume={120},
  year={2010},
  publisher={Cambridge University Press}
}

@book{bain2009fundamentals,
  title={{Fundamentals of Stochastic Filtering}},
  author={Bain, Alan and Crisan, Dan},
  volume={3},
  year={2009},
  publisher={Springer}
}

@book{kallianpur2013stochastic,
  title={{Stochastic Filtering Theory}},
  author={Kallianpur, Gopinath},
  volume={13},
  year={2013},
  publisher={Springer Science \& Business Media}
}

@book{crisan2011oxford,
  title={{The Oxford Handbook of Nonlinear Filtering}},
  author={Crisan, Dan and Rozovskii, Boris},
  year={2011},
  publisher={Oxford University Press}
}

@article{clark1978design,
  title={The design of robust approximations to the stochastic differential equations of nonlinear filtering},
  author={Clark, J. M. C. },
  journal={Communication Systems and Random Process Theory},
  volume={25},
  pages={721--734},
  year={1978},
  publisher={Sijthoff and Noordhoff Alphen aan den Rijn}
}

@article{elliott1981robust,
  title={Robust filtering for correlated multidimensional observations},
  author={Elliott, Robert J and Kohlmann, Michael},
  journal={Mathematische Zeitschrift},
  volume={178},
  number={4},
  pages={559--578},
  year={1981},
  publisher={Springer}
}

\begin{appendix}
\section{}\label{app:proofs}
\subsection{Proofs of Theorems in Section \ref{sec:Flows}}
\begin{proof}[Proof of Theorem~\ref{thm:sde-prop}]
    We first prove the equivalence of the two SDEs \eqref{SDE flow} and \eqref{SDE func flow}. Going from \eqref{SDE func flow} to \eqref{SDE flow} is trivial (putting $f(\xi) = \xi$). We first recall the action of a vector field on a function. Given a vector field $\mathcal X$ and a function $f\in C^\infty$, the action of the vector field on the function $f$ at a point $x$ is the gradient of $f$ in the direction of $\mathcal X$. Specifically,
    \begin{equation*}
        (\mathcal Xf)(x) = \mathcal X(x) \cdot \partial_x f(x)
    \end{equation*}
    Now consider the SDE~\eqref{SDE flow}. Using It\^o's lemma for Stratonovich integrals,
    \begin{align*}
        df(\xi_t) &= \partial_x f(\xi_t)\circ d\xi_t \\
        &= \partial_x f(\xi_t) \cdot Z(M_t,\xi_t)\circ dM_t \\
        &= (Z(M_t)f)(\xi_t)\circ dM_t.
    \end{align*}

    Consider the function $F(t,x) := f\left(\exp\left(\int_0^tZ(s)\,ds\right)(x)\right)$,  for some $f\in C^\infty(\mathbb{R})$. Then, we have $d F(t,x) = Z\left(t,f\left(\exp\left(\int_0^tZ({s})ds \right)(x) \right) \right) \cdot dt$. To see this, using the notation used in the theorem $\phi_t(x) := \exp\left(\int_0^tZ(s)ds\right)(x)$ and
    \begin{align*}
        \frac{d}{dt}f(\phi_t(x)) &= \partial_x f(\phi_t(x))\times \frac{\partial}{\partial t}\phi_t(x) \\
        &= \partial_x f(\phi_t(x)) \times Z(t,\phi_t(x)) \\
        &= Z(t,\phi_t(x))\times \partial_x f(\phi_t(x)) \\
        &= (Z(t)f)(\phi_t(x)),
    \end{align*}
    where we use the definition of $\exp\left(\int_0^t Z(s) \,ds\right)(x)$ as the solution to the ODE in the second equality and the action of the vector field on $f$ at the point $\exp\left(\int_0^tZ_s\,ds \right)(x)$ in the last equality, thereby implying that $d F(t,x) = Z\left(t,f\left(\exp\left(\int_0^tZ({s})ds \right)(x) \right) \right) \cdot dt$.

    Now, using It\^o's formula, we have
    \begin{equation*}
        dF(M_t,x) = \frac{\partial}{\partial t}F(t,x) \bigg\vert_{t= M_t} =  Z\left(M_t,f\left(\exp\left(\int_0^{M_t}Z(s)\,ds \right)(x) \right)\right)\circ dM_t, \hspace{25pt} F(M_0,x) = x
    \end{equation*}
    Thus, from the uniqueness of the strong solution to an SDE, the solution of~\eqref{SDE flow} is
    \begin{equation}
        \xi_t = \exp\left(\int_0^{M_t}Z(s) \,ds \right)(x).
    \end{equation}

    In order to show that the solution to the SDE is continuous in sample paths of the martingale $M_t$, we consider the flow $(\exp{\int_0^tZ(s,x)ds)}(x)$ of the ODE
    \begin{equation*}
        \frac{d}{dt}\phi_t(x) = Z(t,\phi_t(x)),\hspace{25pt} \phi_0(x) = x.
    \end{equation*}
    Consider the difference of flows at two times $t$ and $t'$,
    \begin{align*}
        \left|(\exp\{\int_0^tZ(s,x)ds)\}(x)\right. & \left.-(\exp\{\int_0^{t'}Z(s,x)ds)\}(x) \right|\\ &= \left|(x+\int_0^tZ(s,\phi_s(x))ds)-(x+\int_0^{t'}Z(s,\phi_s(x))ds) \right| \\
        &= \left|\int_{t'}^tZ(s,\phi_s(x))ds \right|\\
        &\leq |Z|_\infty |t-t'|,
    \end{align*}
    where $|Z|_\infty = \sup_{t\in \mathbb{R},x\in\mathbb{R}}Z(t,x)$.\\
    We know that the solution is obtained by composing the flow $(\exp{\int_0^.Z(s,x)ds)}(x):\mathbb{R}\rightarrow\mathbb{R}$ with the map $M_.:\mathbb{R}_+\rightarrow\mathbb{R}$. Thus, for two sample paths $m_t$ and $m'_t$,
    \begin{align*}
        |(\exp{\int_0^{m_t}Z(s,x)ds)}(x)-(\exp{\int_0^{m'_t}Z(s,x)ds)}(x)|\leq |Z|_\infty|m_t-m'_t|.
    \end{align*}
\end{proof}

\begin{proof}[Proof of Theorem \ref{thm:continuity-sde-wrt-obs}]
    Similar to equation \eqref{signal-inverse-flow} define $\tilde{X}_t^y$ as
    \begin{multline*}
        d\tilde{X}_t^y= \tilde{b}(t,\tilde{X}^y_t,y_t)dt + \tilde{\sigma}(t,\tilde{X}^y_t)\circ dB_t + \int_{|x|<1}\tilde{f}(t,x)n(dt,dx) - \int_{|x|<1}\tilde{g}(t,\tilde{X}^y_t,x)dt\nu_1(dx) \\+ \int_{|x|<1}\tilde{g}_1(t,\tilde{X}_{t-}^y,x)\tilde{N}_p(dt,dx),
    \end{multline*}
    where,
    \begin{align*}
        \tilde{b}(t,x,y) &= \partial_x\psi(t,x)b(t,x,y), \\
        \tilde{\sigma}(t,x) &= \partial_x\psi(t,x)\sigma(t,x), \\
        \tilde{g}_1(t,z,x) &= \psi(t,z+g_1(t,z,x))-\psi(t,z), \\
        \tilde{g}(t,z,x) &= \partial_x\psi(t,z)g(t,z,x).
    \end{align*}
    and consider the flow
    \begin{equation*}
        \psi^{-1}(t,x) = \phi(t,x) = x + \int_0^t\sigma(s,\phi(s,x))\circ d\tilde{w}_s.
    \end{equation*}
    Then we have $\phi(t,\tilde{X}_t^y) = X_t^y$. We have for two sample paths $y^1$ and $y^2$ the two flows
    \begin{align}
        \phi^1(t,x) = x + \int_0^t\sigma(\tilde{w}^1_s,\phi^1(s,x))\circ d\tilde{w}^1_s,\\
        \phi^2(t,x) = x + \int_0^t\sigma(\tilde{w}^2_s,\phi^2(s,x))\circ d\tilde{w}_s^2.
    \end{align}
    Consider the difference of the flows,
    \begin{align*}
        \left|\phi^1(t,x)-\phi^2(t,x) \right| &= \left|\int_0^t\sigma(\tilde{w}^1_s,\phi^1(s,x))\circ d\tilde{w}^1_s-\int_0^t\sigma(\tilde{w}^2_s,\phi^2(s,x))\circ d\tilde{w}^2_s \right| \\
        &\leq |\sigma|_\infty \left|\tilde{w}_t^1+\tilde{w}_t^2\right|,
    \end{align*}
    where $|\sigma|_\infty = \sup_{t\in \mathbb{R},x\in\mathbb{R}}\sigma(t,x)$.
    Since $\sigma$ is bounded and $\tilde{w}_t^i$ is finite a.s., the difference between the two flows is finite. Also notice that the RHS has no $x$ dependence. Thus, we have
    \begin{equation*}
        \sup_{x\in\mathbb{R}} \left|\phi^i(t,x) \right| \leq |\sigma|_\infty \left|\tilde{w}^i_t\right|<\
        \infty.
    \end{equation*}
    We can also show Lipschitz continuity of the flows since the vector fields are $C^\infty(\mathbb{R})$, we know that the flows are also $C^\infty(\mathbb{R})$ and thus locally Lipschitz.
    The difference between the signal process driven by two paths of the observation is:
    \begin{align*}
        \left|X_t^1-X_t^2 \right|_{L_1} &= \left|\phi^1(t,\tilde{X}_t^1) - \phi^2(t,\tilde{X}_t^2) \right|_{L_1}\\
        &\leq \left|\phi^1(t,\tilde{X}_t^1) - \phi^1(t,\tilde{X}_t^2) \right|_{L_1}+ \left|\phi^1(t,\tilde{X}_t^2) - \phi^2(t,\tilde{X}_t^2) \right|_{L_1} \\
        & \leq \left|\phi^1(t,\tilde{X}_t^1) - \phi^1(t,\tilde{X}_t^2)\right|_{L_1} + \sup_{x\in\mathbb{R}} \left|\phi^1(t,x)-\phi^2(t,x) \right|.
     \end{align*}
    The last term is not an $L_2$ norm since it has no random variables, thus, only the modulus remains.
    For the maximum process we have
    \begin{equation*}
        \sup_{s\leq t} \left|(X_s^1-X_s^2) \right|_{L_1} \leq \sup_{s\leq t} \left|\phi^1(s,\tilde{X}_s^1) - \phi^1(s,\tilde{X}_s^2) \right|_{L_1} + \sup_{x\in\mathbb{R},s\leq t} \left|\phi^1(s,x)-\phi^2(s,x) \right|
    \end{equation*}

    Since we know that the flows are continuous in both the initial point and in the driving martingale, we have that the second term is continuous in paths of $Y_.$. We only need to show continuity of the first term. Since the flow is $C^\infty$ in the initial condition it is locally Lipschitz. Thus,
    \begin{equation*}
        \left|\phi^1(t,\tilde{X}_t^1) - \phi^2(t,\tilde{X}_t^2) \right|_{L_1}\leq K_{1,2,t} \left|\tilde{X}_t^1-\tilde{X}_t^2 \right|_{L_1}
    \end{equation*}
    To complete the proof, write the processes $\left|\tilde{X}_t^1-\tilde{X}_t^2\right|$,
    \begin{align*}
        &\sup_{r\leq t} \left|\tilde{X}_r^1-\tilde{X}_r^2 \right|\\
        &\quad\leq \sup_{r\leq t} \bigg\{\int_0^rC_s \left|\tilde{X}_s^1-\tilde{X}_s^2 \right| \left|y^1_s-y^2_s \right|ds + \int_0^rU \left|\tilde{X}_s^1-\tilde{X}_s^2 \right|\circ dB_s \\
        &\qquad+\int_0^r\int_{|x|<1}V\alpha(x) \left|\tilde{X}_s^1-\tilde{X}_s^2 \right|\tilde{N}_p(ds,dx)+\int_0^r\int_{|x|<1}\tilde{f}(s,x) \left|n^1(ds,dx)-n^2(ds,dx) \right| \bigg\}.
    \end{align*}
    Here, we used the Lipschitz properties of functions (see \cite[Theorem 6.2.3]{applebaum2009levy} and \cite[Lemma 9]{crisanrough}), and $U$ and $V$ are the Lipschitz constants.
    Taking an expectation on both sides, we have
    \begin{align*}
         &\mathbb{E}[\sup_{r\leq t}|\tilde{X}_r^1-\tilde{X}_r^2|]\\ &\quad\leq \mathbb{E} \left[\sup_{r\leq t} \left\{\int_0^rC_s \left|\tilde{X}_s^1-\tilde{X}_s^2 \right| \left|y^1_s-y^2_s \right|ds + \int_0^rU \left|\tilde{X}_s^1-\tilde{X}_s^2 \right|\circ dB_s  \right.\right. \\ &\qquad\left. \left.+\int_0^r\int_{|x|<1}V\,\alpha(x) \left|\tilde{X}_s^1-\tilde{X}_s^2 \right|\tilde{N}_p(ds,dx)]+\sup_{r\leq t}\int_0^r\int_{|x|<1}\tilde{f}(s,x) \left|n^1(ds,dx)-n^2(ds,dx) \right| \right\} \right].
    \end{align*}
    We can now use the BDG inequality on the expectation on the RHS to get
    \begin{align*}
        \mathbb{E}\left[\sup_{r\leq t} \left|\tilde{X}_r^1-\tilde{X}_r^2 \right| \right]\leq \mathbb{E}\left[\left(\int_0^tU^2 \left|\tilde{X}_s^1-\tilde{X}_s^2 \right|^2ds+\int_0^r\int_{|x|<1}V^2\,\alpha(x)^2 \left|\tilde{X}_s^1-\tilde{X}_s^2 \right|^2ds\nu_2(dx) \right)^{1/2} \right.\\ \left.+ \sup_{r\leq t}\int_0^r\int_{|x|<1}\tilde{f}(s,x) \left|n^1(ds,dx)-n^2(ds,dx) \right| \right].
    \end{align*}
    Taking the maximum of the process $|\tilde{X}_s^1-\tilde{X}_s^2|^2$ on the RHS, we get $\mathbb{E} \left[\sup_{r\leq t} \left|\tilde{X}_r^1-\tilde{X}_r^2 \right| \right] \leq $
    \begin{multline*}
         \mathbb{E} \left[ \left(\sup_{s\leq t} \left|\tilde{X}_s^1-\tilde{X}_s^2 \right|^2\int_0^tU^2\,ds + \sup_{s\leq t} \left|\tilde{X}_s^1-\tilde{X}_s^2 \right|^2\int_0^r\int_{|x|<1}V^2\,\alpha(x)^2ds\nu_2(dx) \right)^{1/2} \right. \\ \left.+ \sup_{r\leq t}\int_0^r\int_{|x|<1}\tilde{f}(s,x) \left|n^1(ds,dx)-n^2(ds,dx) \right| \right].
    \end{multline*}
    Taking the maximum process common and noticing that the remaining terms are deterministic, we have
    \begin{align*}
        \mathbb{E} \left[\sup_{r\leq t} \left|\tilde{X}_r^1-\tilde{X}_r^2 \right| \right]\leq \mathbb{E} \left[ \left(\sup_{s\leq t} \left|\tilde{X}_s^1-\tilde{X}_s^2 \right|^2 \right)^{1/2} \right]&\left(\int_0^tU^2 \,ds +\int_{|x|<1}\int_0^tV^2\,\alpha(x)^2\,ds\nu_s(dx) \right)^{1/2} \\ &+ \sup_{r\leq t}\int_0^r\int_{|x|<1}\tilde{f}(s,x) \left|n^1(ds,dx)-n^2(ds,dx) \right|.
    \end{align*}
    Since $x^{1/2}$ is a monotonically increasing function, we have $(\sup_{s\leq t}X_s)^{1/2} = \sup_{s\leq t}X_s^{1/2}$. Thus,
    \begin{align*}
        \mathbb{E} \left[\sup_{r\leq t} \left|\tilde{X}_r^1-\tilde{X}_r^2 \right| \right]\leq \mathbb{E} \left[\sup_{s\leq t} \left|\tilde{X}_s^1-\tilde{X}_s^2 \right| \right] & \left(\int_0^tU^2ds+\int_{|x|<1}\int_0^tV^2\,\alpha(x)^2ds\nu_s(dx) \right)^{1/2}\\ &+ \sup_{r\leq t}\int_0^r\int_{|x|<1}\tilde{f}(s,x) \left|n^1(ds,dx)-n^2(ds,dx) \right|.
    \end{align*}
    Applying Gronwall's inequality to get
    \begin{multline*}
        \mathbb{E} \left[\sup_{r\leq t} \left|\tilde{X}_r^1-\tilde{X}_r^2 \right| \right]\leq \sup_{r\leq t} \left\{\int_0^r\int_{|x|<1}\tilde{f}(s,x) \left|n^1(ds,dx)-n^2(ds,dx) \right| \right\}\\\times \exp \left( \left(\int_0^tU^2 \, ds + \int_{|x|<1}\int_0^t V^2\alpha(x)^2 \,ds\,\nu_s(dx) \right)^{1/2} \right).
    \end{multline*}
    Using the fact that $\tilde{f}$ is bounded, we have $\mathbb{E}
        \left[\sup_{r\leq t}
        \left|\tilde{X}_r^1-\tilde{X}_r^2 \right| \right]\leq $
    \begin{equation*}
        \left|\tilde{f} \right|_\infty\sup_{r\leq t} \int_0^r\int_{|x|<1}\left|n^1(dt,dx)-n^2(dt,dx) \right|\exp \left( \left(\int_0^tU^2 ds+\int_{|x|<1}\int_0^tV^2\alpha(x)^2ds\,\nu_s(dx) \right)^{1/2} \right).
    \end{equation*}
    Thus, putting everything together, we have
    \begin{equation*}
        \left|\sup_{s\leq t} \left(X_s^1-X_s^2 \right) \right|_{L_1}\leq \tilde{K}_{t}\, \sup_{r\leq t} \int_0^r\int_{|x|<1}\left|n^1(dt,dx)-n^2(dt,dx) \right| + L_t\, \left|w_t^1-w_t^2\right|,
    \end{equation*}
    where $\tilde{K}_{t} = \left|\tilde{f} \right|_\infty \exp \left( \left(\int_0^tU^2 \, ds+\int_{|x|<1}\int_0^tV^2\alpha(x)^2 \, ds\,\nu_s(dx) \right)^{1/2} \right)$.
\end{proof}

\subsection{Proofs of Theorems in Section \ref{sec:Finite}}

\begin{proof}[Proof of Theorem \ref{thm:rough-poisson-cont}]
    Consider the rough flows
    \begin{align*}
        \phi^1(t,x) &:= x + \int_0^t\sigma_2(\phi^1(s,x))d{\rough{X}}_s(\bar{\omega}), \\
        \phi^2(t,x) &:= x + \int_0^t\sigma_2(\phi^2(s,x))d{\rough{\tilde{X}}}_s(\bar{\omega}),
    \end{align*}
    and their corresponding inverse flows $\psi^1,\psi^2$ as defined in equation \eqref{eq:inv-rough-flow}. From It\^o's Lemma,
    \begin{multline*}
        dY^1_t(\cdot,\bar{\omega}) := d\psi^1(t,Y^{\rough{X}}_t(\cdot,\bar{\omega})) = \tilde{b}(Y^1_t)dt+\tilde{\sigma}_1(Y^1_t)\circ dM_t \\+ \int_{U_1}\tilde{f}_1(x,Y^1_t)\tilde{N}_1(dt,dx) + \int_{U_1}\tilde{f}_2(x,Y^1_t)\tilde{N}^1_q(dt,dx)(\bar{\omega}),
    \end{multline*}
    \begin{multline*}
        dY^2_t(\cdot,\bar{\omega}) := d\psi^2(t,Y^{\rough{\tilde{X}}}_t(\cdot,\bar{\omega})) = \tilde{b}(Y^2_t)dt+\tilde{\sigma}_1(Y^2_t)\circ dM_t \\+ \int_{U_1}\tilde{f}_1(x,Y^2_t)\tilde{N}_1(dt,dx) + \int_{U_1}\tilde{f}_2(x,Y^2_t)\tilde{N}^2_q(dt,dx)(\bar{\omega}).
    \end{multline*}
    By Theorem \ref{thm:rough-flow-cont},
    \begin{equation*}
        |\phi^1(t,x_1)-\phi^2(t,x_2)|\le C\bigl(|x_1-x_2|+\rho_{\alpha-\text{H\"ol}}({\rough{X}}(\bar{\omega}),{\rough{\tilde{X}}}(\bar{\omega}))\bigr),
    \end{equation*}
    so
    \begin{multline}\label{eq:rough-cont-ineq}
        \left\|Y^{\rough{X}}_t(\cdot,\bar{\omega})-Y^{\rough{\tilde{X}}}_t(\cdot,\bar{\omega})\right\|_{L^1(\Omega)} = \left\|\phi^1(t,Y^1_t)-\phi^2(t,Y^2_t)\right\|_{L^1(\Omega)} \\
        \le C\left(\left\|Y^1_t(\cdot,\bar{\omega})-Y^2_t(\cdot,\bar{\omega})\right\|_{L^1(\Omega)} + \rho_{\alpha-\text{H\"ol}}({\rough{X}}(\bar{\omega}),{\rough{\tilde{X}}}(\bar{\omega}))\right).
    \end{multline}
    It remains to bound $\|Y^1_t-Y^2_t\|_{L^1(\Omega)}$. We work in $L^2(\Omega)$ to preserve the Gronwall form and pass to $L^1$ by Jensen's inequality at the end. Define the non--decreasing process
    \begin{equation*}
        \Phi(t):=\hat{\mathbb{E}}\!\left[\sup_{s\le t}|Y^1_s(\cdot,\bar\omega)-Y^2_s(\cdot,\bar\omega)|^2\right].
    \end{equation*}
    Subtracting the two SDEs and adding and subtracting $\int_0^r\!\!\int_{U_1}\tilde f_2(x,Y^1_s)\tilde N^2_q(ds,dx)(\bar\omega)$, we write $Y^1_r-Y^2_r=\sum_{i=1}^5 A^i_r$ where
    \begin{align*}
        A^1_r&:=\int_0^r[\tilde b(Y^1_s)-\tilde b(Y^2_s)]\,ds, & A^2_r&:=\int_0^r[\tilde\sigma_1(Y^1_s)-\tilde\sigma_1(Y^2_s)]\circ dM_s,\\
        A^3_r&:=\int_0^r\!\!\int_{U_1}[\tilde f_1(x,Y^1_s)-\tilde f_1(x,Y^2_s)]\tilde N_1(ds,dx), & A^4_r&:=\int_0^r\!\!\int_{U_1}[\tilde f_2(x,Y^1_s)-\tilde f_2(x,Y^2_s)]\tilde N^2_q(ds,dx)(\bar\omega),\\
        A^5_r&:=\int_0^r\!\!\int_{U_1}\tilde f_2(x,Y^1_s)\bigl[\tilde N^1_q-\tilde N^2_q\bigr](ds,dx)(\bar\omega). &&
    \end{align*}
    From $(a+b+c+d+e)^2\le 5(a^2+b^2+c^2+d^2+e^2)$, taking $\sup_{r\le t}$ and $\hat{\mathbb E}$,
    \begin{equation}\label{eq:5split}
        \Phi(t)\le 5\sum_{i=1}^{5}\hat{\mathbb{E}}\!\left[\sup_{r\le t}(A^i_r)^2\right].
    \end{equation}
    We bound each term so that $\Phi(t)$ appears only inside a Stieltjes integral, never multiplicatively.

    \smallskip\noindent\emph{(i) Drift.} By Cauchy--Schwarz and the Lipschitz property of $\tilde b$, $\sup_{r\le t}(A^1_r)^2\le t\,C_b^2\int_0^t|Y^1_s-Y^2_s|^2\,ds$, hence
    \begin{equation*}
        \hat{\mathbb{E}}[\sup_{r\le t}(A^1_r)^2]\le t\,C_b^2\int_0^t\Phi(s)\,ds.
    \end{equation*}

    \smallskip\noindent\emph{(ii) Brownian Stratonovich integral.} Convert Stratonovich to It\^o (the correction is a bounded--variation term absorbable into (i)), then apply Doob's $L^2$--maximal inequality and the It\^o isometry:
    \begin{equation*}
        \hat{\mathbb{E}}[\sup_{r\le t}(A^2_r)^2]\le 4C_\sigma^2\int_0^t\Phi(s)\,ds.
    \end{equation*}

    \smallskip\noindent\emph{(iii) Compensated Poisson $\tilde N_1$.} By Doob's $L^2$--maximal inequality and the $L^2$ isometry for compensated Poisson integrals,
    \begin{equation*}
        \hat{\mathbb{E}}[\sup_{r\le t}(A^3_r)^2]\le 4C_{f_1}^2\!\int_{U_1}\!\alpha_1(x)^2\,\nu_1(dx)\int_0^t\Phi(s)\,ds.
    \end{equation*}

    \smallskip\noindent\emph{(iv) Conditional pure--jump integral.} For fixed $\bar\omega$, $\tilde N^2_q(\cdot,\bar\omega)$ is a signed measure of finite total variation on $[0,T]\times U_1$ (finite--jump setting). Let $|\tilde N^2_q|$ denote its total variation, and set
    \begin{equation*}
        d\mu_2(s,\bar\omega):=\int_{U_1}\alpha_2(x)\,|\tilde N^2_q|(ds,dx)(\bar\omega),\qquad m_2(t,\bar\omega):=\mu_2([0,t],\bar\omega)<\infty\ \text{a.s.}
    \end{equation*}
    By the Lipschitz property of $\tilde f_2$ and Cauchy--Schwarz for the Stieltjes integral,
    \begin{equation*}
        (A^4_r)^2\le C_{f_2}^2\Bigl(\int_0^r|Y^1_s-Y^2_s|\,d\mu_2(s,\bar\omega)\Bigr)^{\!2}\le C_{f_2}^2\,m_2(t,\bar\omega)\int_0^t|Y^1_s-Y^2_s|^2\,d\mu_2(s,\bar\omega).
    \end{equation*}
    Since $\mu_2(\cdot,\bar\omega)$ is $\bar\omega$--measurable and independent of $\omega$, Fubini yields
    \begin{equation*}
        \hat{\mathbb{E}}[\sup_{r\le t}(A^4_r)^2]\le C_{f_2}^2\,m_2(t,\bar\omega)\int_0^t\Phi(s)\,d\mu_2(s,\bar\omega).
    \end{equation*}

    \smallskip\noindent\emph{(v) Source term.} Using $|\tilde f_2|\le|\tilde f_2|_\infty$,
    \begin{equation*}
        \sup_{r\le t}|A^5_r|\le|\tilde f_2|_\infty\,V_q(t,\bar\omega),\qquad V_q(t,\bar\omega):=\left\|\int_{U_1}\bigl(\tilde N^1_q-\tilde N^2_q\bigr)(ds,dx)(\bar\omega)\right\|_{1\text{-var};[0,t]}.
    \end{equation*}

    \smallskip\noindent\emph{Combining.} Define the positive finite--variation control
    \begin{equation*}
        d\Lambda(s,\bar\omega):=\bigl(tC_b^2+4C_\sigma^2+4C_{f_1}^2\|\alpha_1\|^2_{L^2(\nu_1)}\bigr)\,ds+C_{f_2}^2\,m_2(t,\bar\omega)\,d\mu_2(s,\bar\omega),
    \end{equation*}
    which satisfies $\Lambda([0,t],\bar\omega)<\infty$ a.s. Inserting (i)--(v) into \eqref{eq:5split},
    \begin{equation}\label{eq:gronwall-ready}
        \Phi(t)\le 5\int_0^t\Phi(s)\,d\Lambda(s,\bar\omega)+5|\tilde f_2|_\infty^2\,V_q(t,\bar\omega)^2.
    \end{equation}
    This is in Gronwall form. By Theorem \ref{thm:gronwall} (with $\Lambda$ as the finite--variation integrator in place of $|dx_s|$),
    \begin{equation*}
        \Phi(t)\le 5|\tilde f_2|_\infty^2\,V_q(t,\bar\omega)^2\exp\!\bigl(5\Lambda([0,t],\bar\omega)\bigr).
    \end{equation*}
    Jensen's inequality, $\hat{\mathbb{E}}[|Y^1_t-Y^2_t|]\le\Phi(t)^{1/2}$, then gives
    \begin{equation*}
        \bigl\|Y^1_t-Y^2_t\bigr\|_{L^1(\Omega)}\le\tilde C(\bar\omega)\,V_q(t,\bar\omega),\qquad \tilde C(\bar\omega):=\sqrt{5}\,|\tilde f_2|_\infty\exp\!\bigl(\tfrac{5}{2}\Lambda([0,t],\bar\omega)\bigr)<\infty\ \bar{\mathbb P}\text{-a.s.}
    \end{equation*}
    Substituting into \eqref{eq:rough-cont-ineq} yields the claimed bound.
\end{proof}

\subsection{Proofs of Theorems in Section \ref{sec:infinite}}
\begin{proof}[Proof of Theorem \ref{thm:lip flow of canonical RDE}]
        Construct canonical RDEs
        \begin{align*}
            dy^1_t &= V(y^1_t)\diamond d(\mathbf{x}^1_t,\phi^1),\\
            dy^2_t &= V(y^2_t)\diamond d(\mathbf{x}^2_t,\phi^2).
        \end{align*}
        For the continuous RDE solutions $\tilde{y}^1_t$ and $\tilde{y}^2_t$, we have the standard stability estimates (see Theorem 3.13 in \citep{chevyrev2019canonical})
        \begin{equation*}
            \|\tilde{y}^1-\tilde{y}^2\|_{p-\text{var}} \lesssim |y^1_0-y_0^2| + \rho_{p-\text{var}}(\mathbf{x}^{\phi^1,\delta}_1,\mathbf{x}^{\phi^2,\delta}_2).
        \end{equation*}
        Now recall the map $\tau$ in~\eqref{eq:time-ext} and $\tau_r(t) = t(T+r)$/T, and consider the effect of $\tau_x = \tau_r^{-1}\circ \tau$ on the $p$ variation of a path. Since $\tau_r$ and $\tau_r^{-1}$ are reparameterizations of the path, the $p$ variation does not change.

        Next, for any path $x\in C^{p-\text{var}}([0,T],E)$ on a Banach space E, the path $x\circ \tau_r$ is a path in $C^{p-\text{var}}([0,T+r],E)$ with
        \begin{equation*}
            \|x\|_{p-\text{var};[0,T]} = \|x\circ\tau_r\|_{p-\text{var};[0,T+r]}.
        \end{equation*}
        Let $\hat{x} = x\circ\tau_r$, then the path $\hat{x}\circ\tau$ has the property,
        \begin{equation*}
            \|\hat{x}\circ\tau\|_{p-\text{var};[0,T+r]}\leq \|\hat{x}\|_{p-\text{var};[0,T+r]} = \|x\|_{p-\text{var}[0,T]},
        \end{equation*}
        since
        \begin{align*}
            \|\hat{x}\|^p_{p-\text{var};[0,T+r]} &= \sup_{\mathcal{P}_{[0,T+r]}}\left(\sum_{u,v\in \mathcal{P}_{[0,T+r]}}\|x_v-x_u\|^p\right)\\
            &= \sup_{\mathcal{P}_{[0,t_1]}}\left(\sum_{u,v\in \mathcal{P}_{[0,t_1]}}\|x_v-x_u\|^p\right)+\sup_{\mathcal{P}_{[t_1,t_1+r_1]}}\left(\sum_{u,v\in \mathcal{P}_{[t_1,t_1+r_1]}}\|x_v-x_u\|^p\right) + \dots \\
            &= \|\hat{x}\circ\tau\|^p_{p-\text{var};[0,T+r]} + \sum_{i=1}^\infty\|\hat{x}\|^p_{p-\text{var};S_i},
        \end{align*}
        where $S_i$ is the interval where $s\in[\tau(t_{i-}),\tau(t_i))$.

        It is now evident that the $p$-variation of the difference of the solutions to the canonical RDEs $y^1, y^2$ will be smaller than the difference of the solutions to the continuous RDEs $\tilde y^1, \tilde y^2$,  with equality holding when the paths along the $S_i$s are linear. More explicitly, we have
        \begin{equation*}
            \|y^1-y^2\|_{p-\text{var}}\leq\|\tilde{y}^1-\tilde{y}^2\|_{p-\text{var}} \lesssim |y^1_0-y_0^2| + \rho_{p-\text{var}}(\mathbf{x}^{\phi^1,\delta}_1,\mathbf{x}^{\phi^2,\delta}_2).
        \end{equation*}
        Taking the limit $\delta\rightarrow0$, we get our result. The fact that the flows are diffeomorphisms follows from \cite[Theorem 3.13]{chevyrev2019canonical}.
    \end{proof}
    \begin{proof}[Proof of Corollary \ref{thm:RSDE cadlag stability}]
        Consider the canonical rough flows
        \begin{align*}
            \Phi^1(t,x) &:= x + \int_0^tg(\Phi^1(s,x))\diamond d({\rough{\eta}}^1,\phi^1),\\
            \Phi^2(t,x) &:= x + \int_0^tg(\Phi^2(s,x))\diamond d({\rough{\eta}}^2,\phi^2).
        \end{align*}
        We know from Theorem \ref{thm:lip flow of canonical RDE} that the flows are Lipschitz, i.e.,
        \begin{equation*}
            \|\Phi^1(\cdot,x)-\Phi^2(\cdot,y)\|_{p-\text{var};[0,t]}\lesssim |x-y|+\beta_p({\rough{\eta}^1,{\rough{\eta}^2}}).
        \end{equation*}
        Since the flows are of continuous RDEs and are themselves diffeomorphisms, we have the inverse flows:
        \begin{equation*}
            ({\Phi^i})^{-1}(t,x) := \Psi^i(t,x) = x - \int_0^tg(x)\partial_x\Psi^i(s,x)\diamond d({\rough{\eta}^i},\phi^i).
        \end{equation*}
        Writing the equations \eqref{eq:canonical RSDE stability} as the composition of flows,
        \begin{equation*}
            X_t^i = \Phi^i(t,\Psi^i(t,X^i_t)),
        \end{equation*}
        and defining $\tilde{X}^i_t := \Psi^i(t,X_t^i)$, we have
        \begin{equation*}
            d\tilde{X}^i_t = \tilde{b}^i(\tilde{X}^i_t)dt+\tilde{\sigma}^i(\tilde{X}^i_t)\circ dB_t + \tilde{f}^i(\tilde{X}^i_t)\diamond d\xi_t.
        \end{equation*}
        Then, Theorem \ref{thm:lip flow of canonical RDE} implies
        \begin{equation*}
            \|X^1_\cdot-X^2_\cdot\|_{p-\text{var};[0,t]} =  \left\|\Phi^1(\cdot,\tilde{X}^1_\cdot)-\Phi^2(\cdot,\tilde{X}^2_\cdot)\right\|_{p-\text{var};[0,t]}\lesssim |\tilde{X}^1_t-\tilde{X}^2_t| + \beta_p(\rough{\eta}^1,\rough{\eta}^2).
        \end{equation*}
        We now have to bound the first term which can be done in the same way as in Theorem \ref{thm:rough-poisson-cont} since this is a special case.
    \end{proof}

\section{Other Desiderata}\label{app:desiderata}

\begin{theorem}[BDG Inequality]
    For any $1\leq p <\infty$ there exists positive constants $c_p,C_p$ such that for all local martingales $X_t$ and stopping times $\tau$, the following inequality holds
    \begin{equation}
        c_p\mathbb{E}[\langle X\rangle^{p/2}_\tau]\leq \mathbb{E}[\sup_{s\leq \tau}|X_s|]\leq C_p\mathbb{E}[\langle X\rangle^{p/2}_\tau],
    \end{equation}
    where $\langle X\rangle_t$ is the quadratic variation of the process.
\end{theorem}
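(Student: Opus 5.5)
We would first fix the reading of the statement: the middle term is meant to be $\mathbb{E}[\sup_{s\leq\tau}|X_s|^p]$, and for cadlag local martingales (which is what the appendix needs, since it is applied to compensated Poisson integrals) $\langle X\rangle$ should be the quadratic variation $[X]$. For continuous $X$ the two coincide. For the predictable bracket the inequality fails when $p<2$ and $X$ has jumps, so the proof would be written with $[X]$. Writing $X^*_\tau:=\sup_{s\leq\tau}|X_s|$, we localize first. We replace $\tau$ by $\tau\wedge\tau_n$, where $\tau_n$ reduces $X$ and makes $X^*$ and $[X]$ bounded on $[0,\tau_n)$. Once the bound holds for these stopped processes, monotone convergence in $n$ gives it in general. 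After stopping at $\tau$ we may assume $\tau=\infty$ and $X_0=0$.

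The case $p=2$ is immediate. By Doob's $L^2$ maximal inequality and the identity $\mathbb{E}[X_\infty^2]=\mathbb{E}[[X]_\infty]$ for square-integrable martingales, we get $\mathbb{E}[[X]_\infty]\le\mathbb{E}[(X^*)^2]\le 4\,\mathbb{E}[[X]_\infty]$.

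For $p>2$ with $X$ continuous, we apply It\^o's formula to $|x|^p$. This gives $\mathbb{E}|X_\infty|^p=\tfrac{p(p-1)}{2}\mathbb{E}\int|X_s|^{p-2}d[X]_s\le c\,\mathbb{E}[(X^*)^{p-2}[X]_\infty]$. H\"older with exponents $p/(p-2)$ and $p/2$, followed by Doob's $L^p$ inequality, yields the upper bound, where boundedness is used to divide through. For the lower bound we write $[X]_\infty=X_\infty^2-2\int X_{s}dX_s$. The stochastic integral is estimated by the upper bound just proved, applied to $\int X\,dX$, whose bracket is at most $(X^*)^2[X]$. Absorbing terms then gives $\mathbb{E}[[X]_\infty^{p/2}]\lesssim\mathbb{E}[(X^*)^p]$.

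For $1\le p<2$ in the continuous case, we would use Lenglart's domination inequality. $(X^*)^2$ is dominated by the continuous increasing process $4[X]$ in the sense of the $p=2$ case applied at every stopping time, and symmetrically $[X]$ is dominated by $(X^*)^2$. Lenglart then transfers the bound to the concave power $x\mapsto x^{p/2}$, giving both sides. A good-$\lambda$ argument would do the same job.

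The main obstacle is the cadlag case, especially $p=1$ (Davis' inequality), because the jumps break both It\^o on $|x|^p$ and Lenglart's continuity requirement. Here we would follow Davis' decomposition. We split $X=X'+X''$, where $X''$ collects the jumps larger than twice the running maximum of previous jumps, compensated. The part $X''$ has finite variation, with $\mathbb{E}\int|dX''|\lesssim\mathbb{E}[\sup_s|\Delta X_s|]$, and the latter is bounded both by $\mathbb{E}[[X]^{1/2}]$ and by $2\,\mathbb{E}[X^*]$. The part $X'$ has jumps controlled predictably by a previsible increasing process, so the good-$\lambda$ argument (or Lenglart with the jump correction) applies to it. The case $p>1$ in the cadlag setting then follows from Garsia's lemma or a good-$\lambda$ extension of the $p=1$ case, using the same control $\mathbb{E}[\sup|\Delta X|^p]$ on the jumps. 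For the paper's purposes only the upper bound with $p=2$ on compensated Poisson integrals is actually used, and that case is already covered by the $L^2$ isometry argument above.
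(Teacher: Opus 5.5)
The paper does not prove this statement. It is quoted in the appendix as a standard fact, and as printed it is misstated: without the power $p$ on $\sup_{s\le\tau}|X_s|$, the two sides scale differently under $X\mapsto\lambda X$ when $p\neq 1$. Your corrected reading, with $\mathbb{E}[(X^*_\tau)^p]$ and $[X]$ for cadlag $X$, is the right one. Your outline is the standard textbook proof the paper is implicitly citing: Doob plus the isometry at $p=2$, It\^o on $|x|^p$ for $p>2$, Lenglart domination for $1\le p<2$, and Davis' decomposition with a good-$\lambda$ or Garsia argument for jumps. Since the paper only cites, there is no competing route; what your write-up adds is a correct formulation. The facts you use for the Davis decomposition are also correct: the big-jump part has variation at most $2\sup_s|\Delta X_s|$, and $\sup_s|\Delta X_s|\le\min([X]_\infty^{1/2},2X^*)$.

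There is one ordering gap. In the continuous lower bound for $p>2$ you estimate $\int X\,dX$ by ``the upper bound just proved''. That needs the upper bound at exponent $p/2$, which for $2<p<4$ lies in $(1,2)$ and has not been established at that point. You can do the Lenglart step first, since it only uses the $p=2$ case. Alternatively, avoid the issue with $N:=\int[X]^{(p-2)/4}\,dX$:
\begin{itemize}
\item first, $\mathbb{E}[[X]_\infty^{p/2}]=\tfrac p2\,\mathbb{E}[N_\infty^2]$;
\item integration by parts gives $|N_\infty|\le 2X^*[X]_\infty^{(p-2)/4}$;
\item H\"older with exponents $p/2$ and $p/(p-2)$ then yields $\mathbb{E}[[X]_\infty^{p/2}]\le(2p)^{p/2}\,\mathbb{E}[(X^*)^p]$ directly.
\end{itemize}

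Your closing remarks also need correcting.
\begin{itemize}
\item With the predictable bracket, it is not simply that the inequality fails for $p<2$. The lower bound fails for $p<2$ and the upper bound fails for $p>2$. However, the upper bound $\mathbb{E}[(X^*)^p]\le C_p\,\mathbb{E}[\langle X\rangle_\infty^{p/2}]$ does hold for $p\le 2$, by Lenglart applied to $X^2$, which is dominated by the predictable increasing process $\langle X\rangle$.
\item That is exactly the case the paper needs. The one place BDG is invoked by name, the appendix proof of Theorem \ref{thm:continuity-sde-wrt-obs}, uses the $p=1$ upper bound. There the compensator term $\int\!\int V^2\alpha(x)^2|\tilde X^1_s-\tilde X^2_s|^2\,ds\,\nu(dx)$ plays the role of the bracket, rather than $[X]$.
\item It is not only the $p=2$ bound that is used. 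The Doob/isometry $p=2$ estimate appears separately, in the proof of Theorem \ref{thm:rough-poisson-cont}.
\end{itemize}
So to cover what the paper actually uses, add that Lenglart step to your $[X]$-version, for example in the form $\mathbb{E}[X^*]\le 3\,\mathbb{E}[\langle X\rangle_\infty^{1/2}]$.
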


\begin{theorem}[Gronwall]\label{thm:gronwall}
    Given the integral inequality
    \begin{equation}
        \phi(t) \leq K+L\int_0^t \phi(s)|dx_s|,
    \end{equation}
    where $x_t$ is a path with finite variation, then the following inequality holds
    \begin{equation}
        \phi(t) \leq K\exp(L|x|_{1-\text{var}:[0,t]}).
    \end{equation}
\end{theorem}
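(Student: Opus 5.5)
The plan is the classical Picard-iteration proof of Gronwall's lemma, written for a Stieltjes integrator. We let $\mu$ denote the total-variation measure $|dx_s|$ of the finite-variation path $x$, and set $A(t):=\mu([0,t])=|x|_{1\text{-var};[0,t]}<\infty$. Throughout we take $\phi\ge 0$ measurable and bounded on $[0,t]$; this is the situation in which the lemma is used in the proof of Theorem~\ref{thm:rough-poisson-cont}, where $\Phi(t)$ is finite by the moment bounds. Because $x$ may jump (in Theorem~\ref{thm:rough-poisson-cont} the integrator $\Lambda$ has atoms coming from the jump measure), the integral must be read over $[0,s)$, i.e.\ with $\phi$ evaluated at $s-$ against atoms. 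Otherwise an atom of mass $\ge 1/L$ at time $t$ would make the statement false. I would state this convention explicitly at the start of the proof.

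First, substitute the inequality into itself $n$ times. Using Fubini for the product measure $\mu^{\otimes k}$, this gives
\begin{equation*}
\phi(t)\le K\sum_{k=0}^{n-1}L^k\,\mu^{\otimes k}\bigl(\Delta_k(t)\bigr)+L^n\int_{\Delta_n(t)}\phi(s_1)\,\mu^{\otimes n}(ds_1,\dots,ds_n),
\end{equation*}
where $\Delta_k(t):=\{0\le s_1<s_2<\dots<s_k<t\}$ is the \emph{strict} simplex. The strict inequalities are exactly what the half-open convention produces at each nesting step.

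Second, and this is the key estimate, show that $\mu^{\otimes k}(\Delta_k(t))\le A(t)^k/k!$ even when $\mu$ has atoms. The argument is a symmetry one. The $k!$ images of $\Delta_k(t)$ under coordinate permutations are pairwise disjoint subsets of $[0,t)^k$, and they all have the same $\mu^{\otimes k}$-measure. Hence $k!\,\mu^{\otimes k}(\Delta_k(t))\le\mu([0,t))^k\le A(t)^k$. Only the diagonals are lost, and they are not needed since the simplex is strict.

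Third, bound the remainder by $\sup_{[0,t]}\phi\cdot(LA(t))^n/n!$, which tends to $0$ as $n\to\infty$. Letting $n\to\infty$ in the first display then gives $\phi(t)\le K\sum_k (LA(t))^k/k!=K\exp(L|x|_{1\text{-var};[0,t]})$.

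The main obstacle is the treatment of atoms, namely making sure the convention on the integration domain produces strict simplices and so yields the $1/k!$ factor. If one insisted on closed intervals $[0,s]$, the fallback would be to require $L\mu(\{s\})<1$ for every $s$. One would then solve atom by atom, obtaining the product $\prod_s(1-L\mu(\{s\}))^{-1}$, which is not bounded by the exponential. This is why I would commit to the left-limit reading.
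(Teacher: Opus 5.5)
The paper states this lemma without proof. It sits among the standard facts in the appendix and is used only in the Gronwall step of the proof of Theorem~\ref{thm:rough-poisson-cont}, so there is no argument of the paper's to compare with. Your Picard iteration, together with the permutation bound $k!\,\mu^{\otimes k}(\Delta_k(t))\le\mu([0,t))^k$, is a correct and complete proof under your hypotheses: $\phi\ge0$ bounded measurable, with the inequality holding at every $s\le t$. The nonnegativity of $K$ used in your last step follows from $\phi(0)\le K$.

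Your observation about atoms is correct and matters, since $\Lambda$ in that application has atoms coming from $\mu_2$. In fact every atom breaks the literal closed-interval statement, not only atoms of mass at least $1/L$. For $\mu=m\,\delta_{t_0}$ with $0<Lm<1$, take the function equal to $K$ on $[0,t_0)$ and to $K/(1-Lm)$ on $[t_0,t]$. It satisfies the hypothesis with equality but exceeds $Ke^{Lm}$.

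One point to tighten. ``Integrate over $[0,s)$'' and ``evaluate $\phi$ at $s-$ against atoms'' are different hypotheses, and your proof treats only the first. Once the Poisson integrands are written predictably as $Y_{s-}$, as they must be, the paper's estimate actually produces
\[
\Phi(t)\le K+L\int_{[0,t]}\Phi(s-)\,d\Lambda(s).
\]
This does not imply your half-open hypothesis. With a single atom of mass $m$ at $t_0$, the function equal to $K$ before $t_0$ and to $K(1+Lm)$ from $t_0$ on satisfies it. But it violates $\phi(t_0)\le K+L\int_{[0,t_0)}\phi\,d\mu=K$.

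Your argument still covers this case with no new idea:
\begin{itemize}
\item Letting $u\uparrow s$ in the hypothesis gives $\phi(s-)\le K+L\int_{[0,s)}\phi(r-)\,\mu(dr)$. This step needs left limits, which the nondecreasing $\Phi$ has.
\item The iteration then runs over $\{s_1<\dots<s_k\le t\}$, which is still a strict simplex.
\item The same symmetrization bounds its $\mu^{\otimes k}$-measure by $\mu([0,t])^k/k!$.
\end{itemize}
This gives $K\exp(L\mu([0,t]))=K\exp(L|x|_{1\text{-var};[0,t]})$, exactly the paper's form.
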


\begin{theorem}\label{thm:rde=sde}
    Let $M_t$ be a semi-martingale on the probability space $(\Omega,\mathcal{F},\mathbb{P})$ and $Y_t$ be a continuous semi-martingale on the probability space $(\bar{\Omega},\bar{\mathcal{F}},\bar{\mathbb{P}})$. Let $X_t$ be the solution to the doubly stochastic SDE
    \begin{equation}
        dX_t(\cdot,\bar{\omega}) = a(X_t(\cdot,\bar{\omega}))dt + b(X_t(\cdot,\bar{\omega}))\circ dM_t + c(X_t(\cdot,\bar{\omega}))\circ dY_t(\bar{\omega}), \hspace{25pt} X_0(\cdot,\bar{\omega}) = x(\cdot,\bar{\omega}),
    \end{equation}
    on the product space $(\hat{\Omega},\hat{\mathcal{F}},\hat{\mathbb{P}}) := (\Omega\times\bar{\Omega},\mathcal{F}\otimes\bar{\mathcal{{F}}},{\mathbb{P}}\otimes\bar{\mathbb{P}})$.
    Let $\rough{Y}(\bar{\omega}) := (Y(\bar{\omega}),\mathbb{Y}^{\text{strat}}(\bar{\omega}))$ be the Stratonovich rough lift of $Y(\bar{\omega})$. Let $S_t$ be the solution to the RSDE
    \begin{equation}
        dS_t(\cdot,\bar{\omega}) = a(S_t(\cdot,\bar{\omega}))dt + b(S_t(\cdot,\bar{\omega}))\circ dM_t + c(S_t(\cdot,\bar{\omega})) d\rough{Y}_t(\bar{\omega}), \hspace{25pt} S_0(\cdot,\bar{\omega}) = x(\cdot,\bar{\omega}).
    \end{equation}
    Then $\mathbb{P}$ a.s., $S_t = X_t$.
\end{theorem}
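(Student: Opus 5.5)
The plan is to prove the identity by \emph{smooth approximation of the second driver only}. The point is that for a smooth path the RSDE and the SDE are literally the same equation, so the question reduces to identifying two limits. Fix a sequence of dyadic partitions of $[0,T]$ and let $Y^n(\bar\omega)$ be the piecewise-linear interpolation of $Y(\bar\omega)$. Let $\rough{Y}^n(\bar\omega)$ be its canonical (Riemann--Stieltjes) lift. By the Wong--Zakai theorem for the Stratonovich lift (the Remark after the definition of the Stratonovich lift, \cite[Proposition 3.6]{friz2014course}), along a subsequence we have $\rho_\alpha(\rough{Y}^n(\bar\omega),\rough{Y}(\bar\omega))\to 0$ for $\bar{\mathbb P}$-a.e.\ $\bar\omega$, for any $\alpha\in(1/3,1/2)$.

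\textbf{Step 1 (smooth level).} For fixed $\bar\omega$, the path $Y^n(\bar\omega)$ is Lipschitz, so both the RSDE and the doubly stochastic SDE driven by $Y^n$ reduce to the same ordinary SDE on $\Omega$:
\[
dZ^n_t=a(Z^n_t)\,dt+b(Z^n_t)\circ dM_t+c(Z^n_t)\dot Y^n_t(\bar\omega)\,dt .
\]
This SDE has time-dependent Lipschitz coefficients. By strong uniqueness we get $S^n=X^n=Z^n$, $\mathbb P$-a.s., for each such $\bar\omega$.

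\textbf{Step 2 (rough side).} By the stability of RSDEs in the driving rough path, $S^n_t(\cdot,\bar\omega)\to S_t(\cdot,\bar\omega)$ in $L^1(\Omega)$ for a.e.\ $\bar\omega$. This stability is Theorem~\ref{thm:rough-poisson-cont} with the jump terms switched off, proved via the flow decomposition $S=\phi^{\rough Y}(t,\tilde S_t)$ and Theorem~\ref{thm:rough-flow-cont}.

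\textbf{Step 3 (stochastic side).} I will show $X^n\to X$ in $\hat{\mathbb P}$-probability on the product space. The key fact is that $M$ lives on $\Omega$ and $Y$ on $\bar\Omega$, so they are independent under $\hat{\mathbb P}$. Hence $\langle M,Y\rangle=0$, and no extra Itô--Stratonovich correction terms mixing $M$ and $Y$ appear in the limit. I would first try to invoke a general Wong--Zakai theorem for SDEs driven by the joint semimartingale $(t,M,Y)$ in which only the $Y$-component is mollified, namely \cite[Theorem 6.5]{kurtz1995stratonovich} applied to the vector semimartingale $(t,M_t,Y^n_t)\to(t,M_t,Y_t)$. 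The approximating sequence must be checked to satisfy the uniform tightness (UT) condition of that theorem, and the limiting Stratonovich SDE must be identified. The mixed covariation of $Y^n$ and $M$ vanishes in the limit, so the limit is exactly the equation for $X$.

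If the UT verification becomes awkward, my fallback is a flow decomposition, exactly as in the proof of Theorem~\ref{thm:RSDE=SDE}. Write $X=\phi(t,\tilde X_t)$, where $\phi$ is the Stratonovich flow of $c$ driven by $Y$. Similarly write $X^n=\phi^n(t,\tilde X^n_t)$, where $\phi^n$ is the ODE flow driven by $Y^n$. Then:
\begin{itemize}
\item the flows satisfy $\phi^n\to\phi$ locally uniformly with derivatives, by the classical Wong--Zakai theorem for flows;
\item $\tilde X^n$ and $\tilde X$ solve SDEs in $M$ only, with coefficients built from $\phi^n,\partial\phi^n$ (respectively $\phi,\partial\phi$) and depending on $\bar\omega$ as a frozen parameter;
\item convergence of these coefficients gives $\tilde X^n\to\tilde X$ through a Gronwall argument as in the proof of Theorem~\ref{thm:rough-poisson-cont}.
\end{itemize}

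\textbf{Conclusion.} By Fubini, passing to a further subsequence gives convergence in $L^1(\Omega)$ for a.e.\ $\bar\omega$ on both sides. The two limits of the common sequence $Z^n$ therefore coincide, so $S_t=X_t$ $\mathbb P$-a.s.\ for a.e.\ $\bar\omega$, for each $t$. Right-continuity then upgrades this to indistinguishability.

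\textbf{Main obstacle.} I expect Step 3 to be the hardest: justifying the partial Wong--Zakai limit on the product space, with the relevant measurability in $\bar\omega$ and the boundedness or regularity of $a,b,c$ needed for Kunita's flow estimates. Steps 1 and 2 are essentially immediate from earlier results.
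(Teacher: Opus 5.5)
Your proposal is sound in outline, but it takes a genuinely different and longer route than the paper. The paper never approximates $Y$. It conjugates both equations by the inverse flow of $c$: it sets $\tilde X_t=\Theta^{-1}(t,X_t)$, with $\Theta$ the Stratonovich flow of $c$ driven by $Y$, and $\tilde S_t=(\phi^{\rough{Y}})^{-1}(t,S_t)$, with $\phi^{\rough{Y}}$ the rough flow. Then $\tilde X$ and $\tilde S$ solve SDEs driven by $M$ alone, with coefficients built from the respective flows. The two flows coincide almost surely because RDEs driven by the Stratonovich lift agree with Stratonovich SDEs \cite[Theorem 9.2(b)]{friz2014course}. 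So the transformed equations are identical, uniqueness gives $\tilde S=\tilde X$, and hence $S=X$.

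Your scheme (an identity for smooth drivers, then a rough-side limit and a stochastic-side limit) is the standard template for such identifications. Note, however, that your fallback for Step 3 already contains the paper's proof. Once you have $X=\phi(t,\tilde X_t)$ on the product space, your own ingredients $\phi^n\to\phi$ and $\phi^n=\phi^{\rough{Y}^n}\to\phi^{\rough{Y}}$ give $\phi=\phi^{\rough{Y}}$ almost surely. Then $\tilde X$ and $\tilde S$ solve the same $M$-driven SDE and you can stop, with no Gronwall stability in the coefficients, no Fubini argument and no subsequences. The approximation scheme pays off only if Step 3 comes from an off-the-shelf partial Wong--Zakai theorem. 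That would spare you the It\^o--Kunita formula for $\phi^{-1}(t,X_t)$ on the product space, which both the paper and your fallback rely on.

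Two technical cautions follow.

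First, your preferred version of Step 3 will not work as phrased. The dyadic piecewise-linear $Y^n$ is not adapted to $\mathcal F_t\otimes\bar{\mathcal F}_t$. Moreover, Wong--Zakai sequences are not UT: a UT sequence would yield the It\^o limit, not the Stratonovich one. So you would need lagged interpolations, or the shifted filtrations $\mathcal F_t\otimes\bar{\mathcal F}_{t+2^{-n}}$, together with a Wong--Zakai theorem whose hypotheses control the correction (area) terms rather than UT of $(t,M,Y^n)$.

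Second, in Step 2 the transformed coefficients $\tilde b,\tilde\sigma_1$ in Theorem~\ref{thm:rough-poisson-cont} depend on the rough path through $\psi$ and $\partial\psi$. The stability you invoke therefore needs continuity of the rough flow together with its derivatives in $\rough{Y}$, as in your fallback, and not only Theorem~\ref{thm:rough-flow-cont}. Also, for a general continuous semimartingale the Stratonovich lift need not be $\alpha$-H\"older, for example Brownian motion run on a non-H\"older clock. So Steps 1 and 2 should be carried out in $p$-variation with $p\in(2,3)$; for $Y=\tilde W$, the case the paper uses, the H\"older setting is fine.
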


\begin{proof}
    Define the stochastic flow
    \begin{equation*}
        \Theta(t,x)(\bar{\omega}) := x + \int_0^tc(\Theta(s,x)(\bar{\omega}))\circ dY_s(\bar{\omega}).
    \end{equation*}
    and the rough flow
    \begin{equation*}
        \phi^{\rough{Y}}(t,x)(\bar{\omega}) = x + \int_0^t c(\phi^{\rough{Y}}(s,x)(\bar{\omega}))d{\rough{Y}}_s(\bar{\omega})
    \end{equation*}
    Since the stochastic flow is defined using the Stratanovich integral, the integral can be defined using smooth approximations of the process $Y_.$. The rough integral is defined using the Stratonovich lift of the path $Y_.(\bar{\omega})$, the rough path is a geometric rough path, which can also be constructed as the limit of smooth paths. \\
    Keeping this in mind, define the inverse stochastic and rough flows $\Theta^{-1}(t,x)$ and $(\phi^{\rough{Y}})^{-1}(t,x)$, and the processes
    \begin{align*}
        d\tilde{X}_t(\cdot,\bar{\omega}) &:= d\Theta^{-1}(t,X_t(\cdot,\bar{\omega})), \hspace{25pt} &X_0(\cdot,\bar{\omega}) = x(\cdot,\bar{\omega}),\\
        d\tilde{S}_t(\cdot,\bar{\omega}) &:= d(\phi^{\rough{Y}})^{-1}(t,S_t(\cdot,\bar{\omega})), &S_0(\cdot,\bar{\omega}) = x(\cdot,\bar{\omega}).
    \end{align*}
    Since geometric rough paths as well as Stratanovich integrals obey the same type of "chain rule", we have $\tilde{S}_.(\cdot,\bar{\omega}) = \tilde{X}_.(\cdot,\bar{\omega})$ a.s. if $\Theta(t,x)(\bar{\omega}) = \phi^{\rough{Y}}(t,x)(\bar{\omega})$. But we know that the flows are the same from the Universal Limit theorem of rough paths \cite[Theorem 9.2(b)]{friz2014course}.
\end{proof}

\begin{theorem}\label{thm:inv-flow-proof}
    Let $\eta_t$ be a continuous path on $\mathbb{R}$ and $c:\mathbb{R}\rightarrow\mathbb{R}$ be a Lipschitz function. Then the inverse of the ODE flow
    \begin{equation}
        \phi(t,x) := x + \int_0^tc(\phi(s,x))d\eta_s,
    \end{equation}
    is given by
    \begin{equation}
        \psi(t,x) = x - \int_0^tc(x)\partial_x\psi(s,x)d\eta_s.
    \end{equation}
\end{theorem}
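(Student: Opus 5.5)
We verify directly that the proposed $\psi$ is the inverse of $\phi$. The plan is to show that $t\mapsto\psi(t,\phi(t,x))$ is constant, equal to $x$. We work first with $\eta$ smooth, or of bounded variation, so that all integrals are Riemann--Stieltjes and the ordinary chain rule applies. The general continuous case then follows by approximation, using continuity of ODE/RDE flows in the driver as in Theorem~\ref{thm:rough-flow-cont}, applied to geometric (Wong--Zakai type) approximations.

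\textbf{Step 1 (existence and regularity of $\psi$).} The equation for $\psi$ is the linear first-order transport equation
\begin{equation*}
\partial_t\psi(t,x)+c(x)\,\partial_x\psi(t,x)\,\dot\eta_t=0,\qquad \psi(0,x)=x.
\end{equation*}
We solve it by characteristics. Define $\chi(t,x):=\phi(t,\cdot)^{-1}(x)$, which exists because the one-dimensional Lipschitz ODE flow is a homeomorphism; in fact it is a $C^1$ diffeomorphism when $c\in C^1$, which we assume so that $\partial_x\psi$ makes sense. We will show that $\chi$ satisfies the transport equation. Uniqueness then follows from uniqueness of characteristics, and identifies $\psi$ with $\chi$.

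\textbf{Step 2 (key computation).} Differentiate the identity $\chi(t,\phi(t,x))=x$ in $t$:
\begin{equation*}
0=\partial_t\chi(t,\phi(t,x))+\partial_x\chi(t,\phi(t,x))\,c(\phi(t,x))\,\dot\eta_t .
\end{equation*}
Since $\phi(t,\cdot)$ is onto, we may substitute $y=\phi(t,x)$. This gives $\partial_t\chi(t,y)=-c(y)\,\partial_x\chi(t,y)\,\dot\eta_t$ for every $y$, which is exactly the integrated form $\chi(t,y)=y-\int_0^t c(y)\partial_x\chi(s,y)\,d\eta_s$. Conversely, if a $C^1$ solution $\psi$ is given, the same chain-rule computation shows $\frac{d}{dt}\psi(t,\phi(t,x))=0$, so $\psi(t,\phi(t,x))=x$ and $\psi=\phi^{-1}$.

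\textbf{Step 3 (general continuous $\eta$).} This is the main obstacle. For merely continuous $\eta$ the integral $\int\cdots d\eta$ is not classically defined. We interpret the statement as holding for $\eta$ of bounded variation. When a rough lift is available, we instead take smooth $\eta^n\to\eta$ in the appropriate metric. The flows $\phi^n$ then converge locally uniformly together with their spatial derivatives by Theorem~\ref{thm:rough-flow-cont}, so the inverses $\psi^n$ and $\partial_x\psi^n$ converge as well. We then pass to the limit in the integral equation, which identifies the limit as $\phi^{-1}$ solving the stated equation.

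The delicate point is controlling $\partial_x\psi^n=1/\partial_x\phi^n(t,\psi^n)$. It stays bounded away from zero because $\partial_x\phi^n=\exp\bigl(\int_0^t c'(\phi^n)\,d\eta^n\bigr)$ is uniformly bounded below along the approximating sequence. This bound on $\partial_x\phi^n$ requires $c\in C^1$; we note that Lipschitz alone suffices only via a.e. differentiability.
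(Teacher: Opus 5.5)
Your proposal is correct, and its core (the converse half of Step 2, where you show $\frac{d}{dt}\psi(t,\phi(t,x))=0$ by the chain rule and conclude from $\psi(0,x)=x$) is exactly the paper's proof; the paper writes $c(x)$ and omits $\dot\eta_t$ in the intermediate line, whereas your $c(\phi(t,x))\,\dot\eta_t$ is the correct form. Your other pieces go beyond the paper: existence via $\chi=\phi(t,\cdot)^{-1}$, the caveat that $c\in C^1$ (not just Lipschitz) is needed for $\partial_x\psi$ to exist, and the approximation argument for non-smooth $\eta$. The paper simply assumes without comment that $\eta$ is differentiable and that a $C^1$ solution $\psi$ exists.
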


\begin{proof}
    We show that $\psi(t,\phi(x,t)) = x$. Consider
    \begin{align}
        \frac{d}{dt}\psi(t,\phi(t,x)) &= \frac{\partial \psi}{\partial t}(t,\phi(t,x)) + \frac{\partial \psi}{\partial x}(t,\phi(t,x))\times\frac{d}{d t}\phi(t,x) \\
        &= -c(x)\partial_x\psi(t,\phi(t,x)) + \partial_x\psi(t,\phi(t,x))c(x)\\
        &= 0
    \end{align}
    But we also have $\psi(0,\phi(0,x)) = \psi(0,x) = x$. Thus, for all $t$,
        $\psi(t,\phi(t,x)) = x$.
\end{proof}

\end{appendix}

\end{document}